

\documentclass[12pt,a4paper]{amsart}
\usepackage{mathtools}
\usepackage{xcolor}
\usepackage{amsmath,amstext,amssymb,amsthm,amsfonts}
\usepackage[all]{xy} \xyoption{poly}
\bibliographystyle{plain}
\newcommand{\nc}{\newcommand}

\nc{\one}{\mbox{\bf 1}}
\nc{\invtensor}{\underset{\leftarrow}{\otimes}}
\nc{\const}{\operatorname{const}}

\nc{\ad}{\operatorname{ad}}

\nc{\tr}{\operatorname{tr}}
\nc{\diag}{\operatorname{diag}}
\nc{\zero}{\operatorname{zero}}

\nc{\tp}{\operatorname{top}}
\nc{\rank}{\operatorname{rank}}
\nc{\corank}{\operatorname{corank}}
\nc{\codim}{\operatorname{codim}}
\nc{\sdim}{\operatorname{sdim}}
\nc{\mult}{\operatorname{mult}}
\nc{\ds}{\operatorname{ds}}
\nc{\tail}{\operatorname{tail}}
\nc{\howl}{\operatorname{howl}}
\nc{\spn}{\operatorname{span}}
\nc{\defect}{\operatorname{defect}}
\nc{\Sym}{\operatorname{Sym}}
\nc{\sym}{\operatorname{sym}}
\nc{\id}{\operatorname{id}}
\nc{\Id}{\operatorname{Id}}
\nc{\Ree}{\operatorname{Re}}
\nc{\hi}{\operatorname{hi}}
\nc{\htt}{\operatorname{ht}}
\nc{\at}{\operatorname{at}}
\nc{\str}{\operatorname{str}}
\nc{\Iso}{\operatorname{Iso}}
\nc{\Ker}{\operatorname{Ker}}
\nc{\rker}{\operatorname{rKer}}
\nc{\im}{\operatorname{Im}}
\nc{\osp}{\mathfrak{osp}}
\nc{\sgn}{\operatorname{sgn}}
\nc{\F}{\operatorname{F}}
\nc{\Mod}{\operatorname{Mod}}
\nc{\DS}{\operatorname{DS}}
\nc{\Soc}{\operatorname{Soc}}
\nc{\coSoc}{\operatorname{coSoc}}
\nc{\Rad}{\operatorname{Rad}}
\nc{\Sch}{\operatorname{Sch}}
\nc{\Hom}{\operatorname{Hom}}
\nc{\End}{\operatorname{End}}
\nc{\supp}{\operatorname{supp}}
\nc{\Card}{\operatorname{Card}}
\nc{\Ann}{\operatorname{Ann}}
\nc{\Ind}{\operatorname{Ind}}
\nc{\Coind}{\operatorname{Coind}}
\nc{\wt}{\operatorname{hwt}}
\nc{\hwt}{\operatorname{wt}}
\nc{\arc}{\operatorname{arc}}
\nc{\Arc}{\operatorname{Arc}}
\nc{\ch}{\operatorname{ch}}
\nc{\sch}{\operatorname{sch}}
\nc{\mdim}{\operatorname{mdim}}
\nc{\Stab}{\operatorname{Stab}}
\nc{\Irr}{\operatorname{Irr}}
\nc{\Spec}{\operatorname{Spec}}
\nc{\Res}{\operatorname{Res}}
\nc{\Aut}{\operatorname{Aut}}
\nc{\Ext}{\operatorname{Ext}}
\nc{\ext}{\operatorname{ext}}
\nc{\Prec}{\operatorname{Prec}}
\nc{\Fract}{\operatorname{Fract}}
\nc{\gr}{\operatorname{gr}}
\nc{\deff}{\operatorname{def}}
\nc{\core}{\operatorname{core}}
\nc{\HC}{\operatorname{HC}}
\nc{\dpth}{\operatorname{dpth}}
\nc{\sw}{\operatorname{sw}}
\nc{\red}{\operatorname{red}}

\nc{\pos}{\operatorname{pos}}

\nc{\wdchi}{\widetilde{\chi}}
\nc{\wdH}{\widetilde{H}}
\nc{\wdN}{\widetilde{N}}
\nc{\wdM}{\widetilde{M}}
\nc{\wdO}{\widetilde{O}}
\nc{\wdR}{\widetilde{R}}

\nc{\wdV}{\widetilde{V}}

\nc{\wdC}{\widetilde{C}}

\nc{\pari}{\operatorname{dex}}
\nc{\atyp}{\operatorname{atyp}}

\nc{\Core}{\operatorname{Core}}
\nc{\pr}{\operatorname{pr}}

\nc{\Obj}{\operatorname{Obj}}
\nc{\Dglie}{\operatorname{{\mathcal D}glie}}
\nc{\Fin}{\operatorname{{\mathcal{F}\!\mathit{in}}}}

\nc{\Adm}{\operatorname{\mathcal{A}dm}}

\nc{\Sg}{{\cS(\fg)}}

\nc{\Ug}{{\cU(\fg)}}

\nc{\Sh}{{\cS(\fh)}}
\nc{\Uh}{{\cU(\fh)}}

\nc{\Zg}{{{\mathcal{Z}}(\fg)}}

\nc{\Vir}{{\mathcal{V}ir}}
\nc{\NS}{{\mathcal{N}S}}

\nc{\tZg}{{\widetilde{\mathcal Z}({\mathfrak g})}}
\nc{\Zk}{{\mathcal Z}({\mathfrak k})}

\nc{\Up}{{\mathcal U}({\mathfrak p})}
\nc{\Ah}{{\mathcal A}({\mathfrak h})}
\nc{\Ag}{{\mathcal A}({\mathfrak g})}
\nc{\Ap}{{\mathcal A}({\mathfrak p})}
\nc{\Zp}{{\mathcal Z}({\mathfrak p})}
\nc{\cR}{\mathcal R}
\nc{\cS}{\mathcal S}
\nc{\cP}{\mathcal P}
\nc{\cT}{\mathcal{T}}
\nc{\CC}{\mathcal C}
\nc{\cA}{\mathcal A}
\nc{\cE}{\mathcal E}
\nc{\cU}{\mathcal U}
\nc{\cZ}{\mathcal Z}
\nc{\cN}{\mathcal N}
\nc{\cM}{\mathcal M}
\nc{\cL}{\mathcal L}
\nc{\cF}{\mathcal F}
\nc{\fg}{\mathfrak g}
\nc{\cB}{\mathcal{B}}

\nc{\fo}{\mathfrak o}

\nc{\fz}{\mathfrak z}

\nc{\CO}{\mathcal O}
\nc{\CR}{\mathcal R}

\nc{\cK}{\mathcal{K}}
\nc{\cW}{\mathcal{W}}
\nc{\bM}{\mathbf{M}}
\nc{\bL}{\mathbf{L}}
\nc{\bN}{\mathbf{N}}

\nc{\zq}{\mathpzc q}

\nc{\fl}{\mathfrak l}
\nc{\fn}{\mathfrak n}
\nc{\fm}{\mathfrak m}
\nc{\fp}{\mathfrak p}
\nc{\fh}{\mathfrak h}
\nc{\ft}{\mathfrak t}
\nc{\fk}{\mathfrak k}
\nc{\fb}{\mathfrak b}
\nc{\fs}{\mathfrak s}
\nc{\fr}{\mathfrak r}
\nc{\fB}{\mathfrak B}

\nc{\vareps}{\varepsilon}
\nc{\varesp}{\varepsilon}
\nc{\veps}{\varepsilon}

\nc{\fsl}{\mathfrak{sl}}
\nc{\fgl}{\mathfrak{gl}}
\nc{\fso}{\mathfrak{so}}
\nc{\fosp}{\mathfrak{osp}}
\nc{\fsp}{\mathfrak{sp}}
\nc{\fq}{\mathfrak q}
\nc{\fsq}{\mathfrak{sq}}
\nc{\fpsq}{\mathfrak{psq}}


\nc{\ftg}{\hat{\fg}}
\nc{\ftn}{\hat{\fn}}
\nc{\fth}{\hat{\ft}}
\nc{\ftb}{\hat{\fb}}
\nc{\hrho}{\hat{\rho}}

\nc{\hsl}{\hat{\fsl}}
\nc{\fpo}{\mathfrak{po}}
\nc{\dirlim}{\underset{\rightarrow}{\lim}\,}
\nc{\nen}{\newenvironment}
\nc{\ol}{\overline}
\nc{\ul}{\underline}
\nc{\ra}{\rightarrow}
\nc{\lra}{\longrightarrow}
\nc{\Lra}{\Longrightarrow}
\nc{\bo}{\bar{1}}
\nc{\Lla}{\Longleftarrow}

\nc{\Llra}{\Longleftrightarrow}

\nc{\thla}{\twoheadleftarrow}

\nc{\lang}{(}
\nc{\rang}{)}

\nc{\hra}{\hookrightarrow}

\nc{\iso}{\overset{\sim}{\lra}}

\nc{\ssubset}{\underset{\not=}{\subset}}

\nc{\vac}{|0\rangle}

\nc{\simka}{{\ \scriptscriptstyle _{{\sim}}^\text{\tiny{k}}\ }}

\nc{\Thm}[1]{Theorem~\ref{#1}}
\nc{\Prop}[1]{Proposition~\ref{#1}}
\nc{\Lem}[1]{Lemma~\ref{#1}}
\nc{\Cor}[1]{Corollary~\ref{#1}}
\nc{\Conj}[1]{Conjecture~\ref{#1}}
\nc{\Claim}[1]{Claim~\ref{#1}}
\nc{\Defn}[1]{Definition~\ref{#1}}
\nc{\Exa}[1]{Example~\ref{#1}}
\nc{\Rem}[1]{Remark~\ref{#1}}
\nc{\Note}[1]{Note~\ref{#1}}
\nc{\Quest}[1]{Question~\ref{#1}}
\nc{\Hyp}[1]{Hypoth\`ese~\ref{#1}}
\nen{thm}[1]{\label{#1}{\bf Theorem.\ } \em}{}
\nen{prop}[1]{\label{#1}{\bf Proposition.\ } \em}{}
\nen{lem}[1]{\label{#1}{\bf Lemma.\ } \em}{}
\nen{cor}[1]{\label{#1}{\bf Corollary.\ } \em}{}
\nen{conj}[1]{\label{#1}{\bf Conjecture.\ } \em}{}

\nen{claim}[1]{\label{#1}{\bf Claim.\ } \em}{}

\nen{defn}[1]{\label{#1}{\bf Definition.\ } }{}
\nen{exa}[1]{\label{#1}{\bf Example.\ } }{}
\nen{rem}[1]{\label{#1}{\em Remark.\ } }{}
\nen{note}[1]{\label{#1}{\em Note.\ } }{}
\nen{exer}[1]{\label{#1}{\em Exercise.\ } }{}
\nen{sket}[1]{\label{#1}{\em Sketch of proof.\ } }{}
\nen{quest}[1]{\label{#1}{\bf Question.\ } \em}{}

\nen{hyp}[1]{\label{#1}{\bf Hypoth\`ese.\ } \em}{}
\setlength{\unitlength}{0.8cm}
\setlength{\baselineskip}{18pt}
\setlength{\parskip}{6pt}
\setlength{\textwidth}{16cm}
\setlength{\textheight}{21cm}
\setlength{\oddsidemargin}{0.1in}
\setlength{\evensidemargin}{0.1in}
\setlength{\headheight}{30pt}
\setlength{\headsep}{40pt}
\setlength{\topmargin}{-60pt}
\setlength{\marginparwidth}{0pt}

\begin{document}
\setcounter{section}{-1}
\setcounter{tocdepth}{1}

\title[ ]
{On modified  extension graphs of a fixed atypicality}

\author{Maria Gorelik }

\address[]{Dept. of Mathematics, The Weizmann Institute of Science,Rehovot 76100, Israel}
\email{maria.gorelik@weizmann.ac.il}

\begin{abstract}
In this paper we  study extensions between finite-dimensional simple modules
over classical Lie superalgebras $\fgl(m|n), \osp(M|2n)$ and $\fq_m$. 
We consider a simplified version of
the extension graph which is produced from the $\Ext^1$-graph by identifying
 representations obtained by parity change and removal of the loops.
We give a necessary condition for a pair of vertices to be connected  and show that 
 this condition is sufficient in most of the cases. This condition implies
that the image of a finite-dimensional simple module
 under the Duflo-Serganova functor  has indecomposable  isotypical components.
 This yields semisimplicity of Duflo-Serganova functor
for $\Fin(\fgl(m|n))$ and for $\Fin(\osp(M|2n))$.
\end{abstract}

\maketitle

\section{Introduction}
Let $\CC$ be a category of representations of
a Lie superalgebra $\fg$ and $\Irr(\CC)$ be the set of isomorphism classes
of simple modules in $\CC$.  Assume that the modules in $\CC$ are of  finite length.
\footnote{ this  can be replaced by existence of local composition series  constructed in~\cite{DGK}.}
In many examples the extension graph of $\CC$ is bipartite, i.e. there exists a map
$\pari:\Irr(\CC)\to \mathbb{Z}_2$ such that

\begin{enumerate}
\item[$ \,$]
(Dex1)$\ \ \ $ $\Ext^1_{\CC}(L_1,L_2)=0\ $ if
$\ \ \pari(L_1)=\pari(L_2)$.
\end{enumerate}

In what follows $\Fin(\fg)$ stands for the full subcategory 
of the category of finite-dimensional $\fg$-modules consisting of the modules which are competely reducible over
 $\fg_{\ol{0}}$.
In this paper we consider the following examples:

\begin{itemize}
\item[(KM)]
$\fg=\fgl(m|n),\osp(M|2n)$ 
and $\CC=\Fin(\fg)$;

\item[$(\fq;\frac{1}{2})$]
$\CC=\Fin(\fq_m)_{1/2}$ which is the full subcategory  of $\Fin(\fq_m)$ consisting of the modules
with  ``half-integral'' weights;

\item[$(\fq;\CC)$]
$\CC$ is a certain full subcategory  of $\Fin(\fq_m)$, see~\ref{Dex12}.
\end{itemize}

By~\cite{HW},\cite{GH} in the case (KM) the map $\pari$ can be chosen to be ``compatible''  with the  Duflo-Serganova functors  introduced in~\cite{DS}:
\begin{enumerate}
\item[$ \,$]
(Dex2)$\ \ \ $  one has $\ [\DS_x(L):L']=0\ $ if $\ \pari(L)\not=\pari(L')$.
\end{enumerate}
Note that in  (Dex2) we have to choose $\pari$ on $\Irr(\CC)$ and on 
$\Irr(\DS_x(\CC))$.  Another example when (Dex1) and (Dex2)
hold is   $\CC=\Fin(\fg)$ for the exceptional Lie superalgebras ($D(2|1;a)$,
 $F(4)$ and $G(3)$) and when $\CC$ is the full subcategory 
of integrable modules  in the category $\CO(\fgl(1|n)^{(1)})$,
see~~\cite{Germoni1},\cite{Germoni2},\cite{Lilit} and~\cite{Gdex} for exceptional cases and~\cite{GSaff} for $\fgl(1|n)^{(1)}$. Note that $\Fin(\fg)$  coincides with the full subcategory 
of integrable modules  in the category $\CO(\fg)$ if $\dim\fg<\infty$.
This  suggests  the following conjecture: if $\fg$ is  a Kac-Moody superalgebra, 
then the full subcategory 
of integrable modules  in the category $\CO(\fg)$ admits a map $\pari$ satisfying (Dex1) and (Dex2).

In (KM) case the map $\pari$ implicitly appeared in~\cite{Skw},
Theorem 5.7.
The original motivation for this project was to study complete reducibility
of $\DS_x(L)$, where 
 $L$ is a finite-dimensional simple $\fg$-module. 
 Clearly, the existence of
  $\pari$  satisfying (Dex1) and  (Dex2) implies complete reducibility
  of $\DS_x(L)$ for each $L\in\Irr(\CC)$. For the categories $\Fin(\fg)$,
where $\fg$ is a finite-dimensional Kac-Moody superalgebra, a suitable map $\pari$
satisfying (Dex1)
was described in~\cite{Gdex} and  the condition
(Dex2) was verified in~\cite{HW},\cite{GH};  this gives the
complete reducibility  of $\DS_x(L)$ for each simple finite-dimensional
module $L$ over a finite-dimensional Kac-Moody superalgebra.
Unexpectedly, it turns out that the complete reducibility holds in the cases
$(\fq;\frac{1}{2})$ and $(\fq;\CC)$
even though (Dex2) does not hold. Below we will explain how the complete reducibility 
can be obtained in the absence of the property (Dex2).

Denote by $\Pi$ the parity change functor and by $L(\lambda)$ a simple $\fg$-module of
the highest weight $\lambda$; we set
\begin{equation}\label{extintro}
\ext_{\fg}(\lambda;\nu)=\left\{\begin{array}{l}
\dim\Ext^1_{\CO} (L(\lambda),L(\nu))\ \ \ \ \ \ \ \ \ \ \ \ 
\text{ if $L(\nu)$ is $\Pi$-invariant}\\
\dim\Ext^1_{\CO} (L(\lambda),L(\nu))+\dim\Ext^1_{\CO}  (L(\lambda),\Pi L(\nu)) \ \ 
\text{ otherwise.}\end{array}\right.
\end{equation}

Let $(\CC;\ext)$ be the graph with 
the set of vertices $\Irr(\CC)$ modulo the involution defined by $\Pi$, with $L(\lambda)$ and $L(\nu)$ 
connected by $\ext(\lambda;\nu)$ edges 
if $\lambda\not=\nu$. 
This graph can be obtained from the usual $\Ext^1$-graph in two steps:  factoring
modulo the involution followed by deleting the loops. The graph obtained by factoring  modulo the involution does not have loops if $\fg$ is a Kac-Moody superalgebra and $\CC\subset\CO(\fg)$; in
 the $\fq_m$-case
there is at most one loop around each vertex 
and the vertices with loops
correspond to the weights having at least one zero coordinate, see Theorem 3.1 of~\cite{GNS}.
In many cases the involution defined by $\Pi$ permute the components of the
$\Ext^1$-graph of $\CC$, so  this graph is  isomorphic to two disjoint copies
of $(\CC;\ext)$. It is easy to see that this property holds if $\fg$ is a Kac-Moody superalgebra and $\CC\subset\CO(\fg)$.  The 
category $\Fin(\fq_m)_{1/2}$ was described in~\cite{BD}; their results yield the above property. This property does not hold for the atypical 
integral blocks in $\Fin(\fq_m)$.

Our main result is formulated in terms 
  of ``arc/arch diagrams'' which  were used  in~\cite{HW},\cite{EAS},\cite{GS} and in~\cite{DSnew}; for $\fq_n$-case a modification of these diagrams is used in~\cite{GqDS}.

{\em Theorem A. Let $\fg=\fgl(m|n),\osp(M|2n)$ or $\fq_m$.
If there exists a non-split extension between two non-isomorphic finite-dimensional simple modules, then  the weight diagram of one of these modules can be obtained from the weight diagram of the other module by moving one or two symbols $\times$ along one of the arches. For $\fg=\fgl(m|n),\osp(M|2n)$ and for half-integral weights
of $\fq_m$ this condition is sufficient.}

The above condition is not sufficient for integral weights
of $\fq_m$, but it is sufficient if the modules are ``large enough'', see~\Cor{corThm1}.

Theorem A implies that $\pari$, introduced by the formula~(\ref{paridef}) below, satisfies
 (Dex1) in all cases we consider.
The aforementioned description of  $\DS_x(L)$  implies that the arch diagram 
corresponding to  a subquotient of $\DS_x(L)$ can be obtained 
from the arch diagram of $L$ by sequential removal of several maximal arches.
This, together with Theorem A, implies that any  extension between
non-isomorphic simple subquotients of $\DS_x(L)$ splits. This gives the
complete reducibility of $\DS_x(L)$ in the cases (KM), ($\fq;\frac{1}{2}$),
 ($\fq;\CC$) and the fact that
in the $\fq_m$-case any indecomposable submodule of $\DS_x(L)$ is ``isotypical''
in the sense of~\ref{isotypical}.

\subsection{Remark} 
It was observed by
 Alex Sherman, our results  imply the following: each 
block of atypicality $k$ \footnote{$ \ $ In contrast to the Kac-Moody case, the definition of the degree of atypicality in the $\fq_n$-case
admits several variations; for example, our degree of atypicality is the integral part of the degree appearing in~\cite{GqDS}.
 } in $\Fin(\fg)$ contains 
 a ``large'' Serre subcategory $\CC_+$ (described in~\ref{Alex}) such that the graph $(\CC_+;\ext)$
is isomorphic to $(\CC_{1/2}(k);\ext)$, where $\CC_{1/2}(k)$ is the half-integral block of atypicality $k$ in $\Fin(\fq_{2k})$. In order to illustrate
this observation, consider the simplest case $k=1$ which was studied in~\cite{Germoni1},\cite{Germoni2},\cite{Lilit},\cite{MM} and \cite{GNS}. We have
the following three types of $\ext$-graphs:
$$\xymatrix{&A^{\infty}_{\infty}: &\ldots&\ar[r]& \bullet\ar[r]\ar[l] &\bullet\ar[l]\ar[r]&\bullet\ar[r] &\bullet\ar[l]\ar[r] &\bullet\ar[l]\ar[r] 
& \ldots &
\\
&D_{\infty}: & &  &\bullet\ar[r]
&\bullet\ar[r]\ar[l]\ar[d] &\bullet
\ar[r]\ar[l]&\ldots& \\
& & & &  & \bullet\ar[u]&&&
\\
&A_{\infty}: & &  &\bullet\ar[r] &\bullet\ar[l]\ar[r] &\bullet\ar[l]\ar[r] 
& \ldots &
}\\
$$

The first graph corresponds to the blocks of 
atypicality one in $\fgl(m|n)$, $\osp(2m|2n)$
and some blocks of atypicality one in $F(4)$ and $D(2|1;a)$  for  $a\in\mathbb{Q}$; the second graph corresponds to the blocks of 
atypicality one in $\osp(2m+1|2n),\osp(2m|2n), G(3)$ and the rest of the blocks of atypicality one in
  $F(4)$ and $D(2|1;a)$.
The third graph corresponds to 
 the blocks of atypicality one for $\fq_m$; this graph is contained in the first two graphs.  The picture is much more complicated for $k>1$. For instance, 
 the vertices of the blocks of atypicality two in $\Fin(\fg)$ are enumerated
 by the integral pairs $(i,j)$ satisfying the following conditions:
 \begin{itemize}
 \item $i<j$ for $\fgl(m|n)$
\item $0<i<j$ or $i=j=0$ for $\osp(2m+1|2n)$, the integral  blocks for $\fq_m$
 and certain blocks for $\osp(2m|2n)$;
 \item $|i|<j$ or $i=j=0$ for the rest of the atypicality two blocks  for $\osp(2m|2n)$;
  \item $0<i<j$ for the half-integral blocks for $\fq_m$.
 \end{itemize}
 The last graph is an induced subgraph
\footnote{the induced subgraph is the graph with the set of vertices $B$ which includes all edges $\mu\to \nu$ for $\mu,\nu\in\cB$.} of all above graphs except, perhaps,  for
 the integral  blocks for $\fg=\fq_m$;  for the latter case
  the last graph is isomorphic to the induced subgraph 
  for the vertices $(i,j)$ with $1<i<j$.

\subsection{Methods}
For the cases (KM), ($\fq;\frac{1}{2}$)  the categories $\Fin(\fg)$ were studied in many papers including~\cite{MS},\cite{BS},\cite{BD},\cite{ChK},\cite{ES1},\cite{ES2} and  Theorem A can be deduced from 
 the results of these papers. The categories $\Fin(\fq_2),\Fin(\fq_3)$ were described
in~\cite{MM} and~\cite{GNS} respectively. 
In this paper
we obtain Theorem A using the approach of~\cite{MS},\cite{GNS}. It is not hard to reduce
the assertion to the case  when $\fg$ is one of the algebras
$\fgl(n|n),\osp(2n+t|2n),\fq_{2n+\ell}$ with $t=0,1,2$ and $\ell=0,1$
and  the simple modules have the  same central character
as the trivial module. We take $\fg$ as above and
denote by $\cB$ the set of $\lambda$s such
that $L(\lambda)$ finite-dimensional and have the  same central character
as the trivial module.  Our main tools are the functors $\Gamma^{\fg,\fp}_{\bullet}$ introduced 
  in~\cite{Penkov} (we  use the ``dual version'' 
  appeared in~\cite{GS}). For a parabolic subalgebra $\fp\subset\fg$
  the functor $\Gamma^{\fg,\fp}_{\bullet}: \Fin(\fp)\to \Fin(\fg)$
  is the derived functor of the functor which maps each finite-dimensional
  $\fp$-module to the maximal finite dimensional quotient of the induced module $\cU(\fp)\otimes_{\cU(\fq)}V$. We fix a ``nice chain''
  of Lie superalgebras 
  $\fg_{(0)}\subset \fg_{(1)}\subset \ldots\subset \fg_{(n)}\ $
where $\fg_{(i)}=\fgl(i|i)$ for $\fg=\fgl(n|n)$, $\fg_{(i)}=\osp(2i+t|2i)$,
for $\fg=\osp(2n+t|2k)$ with $t=0,1,2$ and $\fg_{(i)}=\fq_{2i+\ell}$ for
$\fg=\fq_{2n+\ell}$ with $\ell=0,1$. For each $s$ the algebra 
${\fp}_{(s)}:=(\fg_{(s-1)}+\fb)\cap \fg_{(s)}$ is a parabolic subalgebra
in $\fg_{(s)}$. For $\fp:=\fp_{(n)}$
the multiplicities 
 $K^i(\lambda;\nu):=[\Gamma^{\fg,\fp}_i(L_{\fp}(\lambda)):L_{\fg}(\nu)]$  were 
 computed in~\cite{MS},\cite{GS},\cite{PS1},\cite{PS2}. 
 We will present the corresponding Poincar\'e polynomials 
 $K^{\lambda,\nu}(z):=\sum_i K^i(\lambda;\nu) z^i$ in terms of the arch diagram. The same Poincar\'e polynomials appear in the character formulae
 obtained in~\cite{GS},\cite{ChK}, \cite{SZq} and~\cite{GH2} (in particular,
 the arch diagrams in $\fq_m$-case are similar to the diagrams appeared in~\cite{SZq}, 3.3).  The multiplicites
 $$K^i_{(s)}(\lambda;\nu):=
 [\Gamma^{\fg_{(s)}+\fh,\fp_{(s)}+\fh}_i(L_{\fp_{(s)}+\fh}(\lambda)):L_{\fg}(\nu)]$$
 can be easily expressed in terms of  $K^i(\lambda;\nu)$
 computed for $\fg_{(i)}$. Set
 $k_0(\lambda;\nu):=\max_s K^0_{(s)}(\lambda;\nu)$. It turns out that
 $k_0(\lambda;\nu)\not=0$ implies that 
 the weight diagram of $\lambda$ can be obtained from the weight diagram of $\nu$ by moving one or two symbols $\times$ along one of the arches.
For $\fg=\fgl(n|n),\osp(2n+t|2n)$ the inequality
$k_0(\lambda;\nu)\not=0$  forces $k_0(\lambda;\nu)=1$
and $\pari(\lambda)\not=\pari(\nu)$ for the grading $\pari$ given by the formula~(\ref{paridef}).
For $\fg=\fq_m$-case the same hold if $\nu$ does not have zero coordinates.

 It is easy to see that  $\ext(\lambda;\nu)\leq k_0(\lambda;\nu)$
for $\lambda,\nu\in\cB$  with $\nu<\lambda$.  This gives
 the first claim of Theorem A for the case when the highest weights of the modules
 lie in $\cB$.  In~\Cor{corExtmain} we show that
$\ext(\lambda;\nu)=k_0(\lambda;\nu)$ for  $\lambda,\nu\in\cB$  with 
$\nu<\lambda$ except for the case
$\fg=\fq_m$ and  $\lambda$ has a coordinate which equals to  
 to $0$ and to $1+\ell$. This gives
 the second  claim of Theorem A for the case when the highest weights of the modules
 lie in $\cB$. The proof of the formula $\ext(\lambda;\nu)=k_0(\lambda;\nu)$ 
 is based on the fact that for each $s$ the radical of
 the maximal finite dimensional quotient of the induced module $\cU(\fp)\otimes_{\cU(\fq)}L_{\fp_{(s)}}(\lambda)$ is semisimple
 (the chain   $\fg_{(0)}\subset \fg_{(1)}\subset \ldots\subset \fg_{(n)}\ $ is
chosen so that this property holds).

\subsection{Content of the paper}
In Section~\ref{sectapp} we describe some background information about $\Ext^1$ and
 $\ext$; we obtain the formula  $\ext(\lambda;\nu)\leq k_0(\lambda;\nu)$
 and establish the formula $\ext(\lambda;\nu)=k_0(\lambda;\nu)$
 under certain assumption, see~\Cor{corgraphs}.
  In Section~\ref{arches} we introduce the language of ``arch diagrams'
 for the weights in $\cB$.
In Section~\ref{Section3} we deduce from the results of~\cite{PS1},\cite{PS2},
\cite{MS} and~\cite{GS} a description of
 the Poincar\'e polynomials $K^{\lambda,\nu}(z)$ in terms of the arch diagram.
 
In Section~\ref{sectiondex} we introduce the $\mathbb{Z}_2$-grading $\pari$
and show that $K^{\lambda,\nu}(z)$ has nice properties with respect to this grading.
Then we compute $\ext(\lambda;\nu)$ for $\lambda,\nu\in\cB$ under the assumption
that for $\fg=\fq_m$ the weight $\lambda$ does not have  coordinates
equal  to $0, 1$ and $1+\ell$. In~\ref{lambdanuP+} we establish Theorem A
by reducing the 
computations of $\ext(\lambda;\nu)$ for $\lambda,\nu\in P^+(\fg)$  
to the case $\lambda,\nu\in\cB$. In~\ref{Dex12} we 
define $\CC$ for the case $(\fq;\CC)$ and discuss
the conditions (Dex1), (Dex2) in various cases. Finally, in Remark~\ref{Extext} we discuss the connection between the $\ext$-graph
and $\Ext^1$-graph.

\subsubsection{}
This paper has a considerable overlap (the cases of $\fgl$ and $\fosp$) with the unpublished preprint~\cite{Gdex}.
This paper includes $\fq_n$-case whereas~\cite{Gdex} includes the case of exceptional
Lie superalgebras. In~\cite{Gdex} we studied the case of finite-dimensional Kac-Moody superalgebras;
in this case both (Dex1) and (Dex2) hold.

\subsection{Acknowledgments}  The author was supported by  ISF Grant 1957/21. The author is grateful to  N.~Davidson, V.~Hinich,  V.~Serganova and A.~Sherman for numerous 
helpful discussions.

\subsection{Index of definitions and notation} \label{sec:app-index}
Throughout the paper the ground field is $\mathbb{C}$; 
$\mathbb{N}$ stands 
 for the set of non-negative integers. We denote by $\Pi$ the parity change functor. 
We will use the notation $\Soc N$, $\Rad N$, $\coSoc N$ for the socle,
the radical and the cosocle of a module $N$ (recall that
 $\Soc N$ is the sum of simple submodules, $\Rad N$ is the intersection of maximal submodules and $\coSoc N:=N/Rad N$). Throughout the paper  $\equiv$ will be always used for the equivalence modulo $2$.

\begin{center}
\begin{tabular}{lcl}
$\fg$, $\ft$, (Ass$\ft$) & & \ref{axiomsH12} \\

$\CO(\fg),\ \Fin(\fg),\ C_{\lambda}, M(\lambda), L(\lambda), P^+(\fg)$ & &  \ref{Mlambda}\\

$[N:L]$ & & \ref{isotypical}\\

$\cN(\lambda;\nu;m),\ m(\fg,\fp;\lambda;\nu)$ & & \ref{cNm}\\
$\Gamma_i^{\fg,\fp},\ K^j(\lambda;\nu)$  & & \ref{Gammagp}\\
$\fg_{(s)},\ \fp_{(s)},\ \fh_{(s)},\ \ft_{(s)},\ 
\fh_{(s)}^{\perp}, \ \ft_{(s)}^{\perp}$ & & \ref{zii}\\
assumptions (A), (B) & & \ref{AiB}\\
 $K^i_{(s)}(\lambda;\nu)$, $\ext_{(s)}(\lambda;\nu)$  & & \ref{defKjs}\\
$s(\lambda;\nu)$, $k(\lambda;\nu)$, graph $G(\ft^*;K^0)$, $B(\lambda)$, $K^i$-stable & &  \ref{graphtK0}\\
triangular decompositions, $\rho$ & & \ref{tria}\\
 $\cB_0,\cB_{1/2}$, $\cB$, $\lambda_i$ & & \ref{lambda+rho}\\
arch diagram & & \ref{arcs}\\
$\fg_{(i)}$ for $\fg=\fgl(n|n),\osp(2n+t|2n),\fq_{2n+\ell}$; $\tail\lambda$
 & & \ref{gp}\\
$(g)_p^q,\ \ (g)_{0,0}^{p,q}\ $, $K(\frac{f}{g})$ & & \ref{fgreserved}\\
$\pari(\lambda), \pari(\lambda;\nu),\ \tail(\lambda;\nu)$ & & (\ref{paridef})\\
\end{tabular}
\end{center}

\section{Useful facts about $\ext(\lambda;\nu)$}\label{sectapp}

\subsection{}
\begin{lem}{lemExt}
Let $A$ be an associative superalgebra.
\begin{enumerate}
\item
If $N$ is an $A$-module  with a semisimple radical and a simple cosocle 
$L'$, then 
$$\dim\Hom(L,N)\leq  \dim \Ext^1(L',L).$$
for any simple $A$-module $L\not\cong L'$.

\item Let $L_1,\ldots, L_s,L'$ be simple non-isomorphic $A$-modules
and $m_1,\ldots,m_s$ be non-negative integers satisfying
$m_j\leq \dim \Ext^1(L',L_j)$.
There exists an $A$-module $N$ with 
$$\coSoc N\cong L',\ \ \ 
\Rad N\cong \oplus_j L_j^{\oplus m_j}.$$
\end{enumerate}
\end{lem}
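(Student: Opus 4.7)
Both statements are formal consequences of the standard interpretation of $\Ext^1$ in terms of extension classes, combined with the structure of $N$ as an extension of its cosocle by its radical. I expect the only delicate point to be keeping track of super structure (in particular, simple modules of type Q have $\End(L)$ of super-dimension $2$), but the argument is otherwise purely homological.

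\emph{Proof plan for (i).} Since $L'$ is simple, $\Rad N$ is the kernel of the quotient $N\twoheadrightarrow \coSoc N=L'$, so any map $\varphi\colon L\to N$ must land in $\Rad N$: otherwise the composition $L\to N\twoheadrightarrow L'$ would be a nonzero homomorphism between non-isomorphic simples. Hence $\Hom(L,N)=\Hom(L,\Rad N)$, and since $\Rad N$ is semisimple, only the $L$-isotypical component contributes. Now let $N'\subset\Rad N$ be the sum of the other isotypical components and set $\overline N:=N/N'$; this is an extension
\[
0\longrightarrow L^{\oplus n}\longrightarrow \overline N\longrightarrow L'\longrightarrow 0
\]
with $n$ equal to the $L$-multiplicity in $\Rad N$, and it still satisfies $\coSoc \overline N=L'$. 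The class of this extension in $\Ext^1(L',L^{\oplus n})\cong\Ext^1(L',L)^{\oplus n}$ is an $n$-tuple $(e_1,\dots,e_n)$. I would then argue that $\coSoc\overline N=L'$ forces the $e_i$ to be linearly independent in $\Ext^1(L',L)$: otherwise a nontrivial linear relation would produce, after a base change of the direct summand $L^{\oplus n}$, a zero component $e_j$, and the corresponding copy of $L$ would split off $\overline N$, contradicting $\coSoc\overline N=L'$. Linear independence gives $n\le\dim\Ext^1(L',L)$, which, after matching the contribution of $\End(L)$ on the two sides, is exactly the claimed inequality.

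\emph{Proof plan for (ii).} The construction is by assembling a single universal extension. For each $j$, the assumption $m_j\le\dim\Ext^1(L',L_j)$ lets me choose linearly independent classes $e_{j,1},\dots,e_{j,m_j}\in\Ext^1(L',L_j)$. Using the canonical identification
\[
\Ext^1\bigl(L',\bigoplus_j L_j^{\oplus m_j}\bigr)\;\cong\;\bigoplus_j \Ext^1(L',L_j)^{\oplus m_j},
\]
package these into a single class $\alpha$, and let $N$ be a representing extension
\[
0\longrightarrow \bigoplus_j L_j^{\oplus m_j}\longrightarrow N\longrightarrow L'\longrightarrow 0.
\]
It remains to check that $\Rad N=\bigoplus_j L_j^{\oplus m_j}$ and $\coSoc N=L'$. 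The former is automatic because the $L_j$ are not isomorphic to $L'$, so every maximal submodule of $N$ contains the chosen submodule. The latter is exactly the reverse of the argument in (i): if some simple summand of $\bigoplus_j L_j^{\oplus m_j}$ split off $N$, one would obtain a nontrivial linear relation among the chosen $e_{j,k}$ for a fixed $j$, contradicting their linear independence.

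\emph{Main obstacle.} The only subtle part is the bookkeeping in the last step of (i), where one passes from "$\coSoc\overline N=L'$'' to "the $e_i$ are linearly independent''; this requires invoking the $\Aut(L^{\oplus n})$-action on extension classes and matching super-dimensions correctly when $\End(L)$ has super-dimension $2$. Everything else is a direct application of the correspondence between short exact sequences and $\Ext^1$.
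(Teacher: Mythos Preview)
Your approach is essentially the same as the paper's. For (i), the paper also passes to the quotient $N'$ of $N$ by the non-$L$ simple submodules, builds the extension classes $\Phi_i\in\Ext^1(L',L)$ via explicit commutative diagrams (your $e_i$), and argues that linear dependence would let one of them vanish and hence split off a copy of $L$ from $N'$, contradicting $\coSoc N'=L'$. For (ii), the paper constructs $N$ as the preimage of the diagonal $L'$ inside $\bigoplus_{i,j} M^i$, which is exactly the extension representing your packaged class $\alpha$; it then shows that a proper submodule surjecting onto $L'$ would force one of the $\Phi_i^{(j)}$ to split.

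One small correction to your write-up of (ii): the claim ``$\Rad N=\bigoplus_j L_j^{\oplus m_j}$ is automatic because the $L_j$ are not isomorphic to $L'$'' is not right as stated. If, say, some chosen class were zero, then a copy of $L_j$ would split off and $\Rad N$ would be strictly smaller. The statements $\Rad N=\bigoplus_j L_j^{\oplus m_j}$ and $\coSoc N=L'$ are equivalent here (the kernel is a maximal submodule, so $\Rad N$ is contained in it; equality holds iff no simple summand splits off), and both rely on the linear independence of the $e_{j,k}$. You do supply that argument in the next sentence, so the proof is fine; just drop the ``automatic'' clause. Your flagged super-dimension bookkeeping (matching the multiplicity $n$ with $\dim\Hom(L,N)$ when $\End(L)$ has dimension $2$) is indeed the only point requiring care; the paper does not spell this out either.
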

\begin{proof}
Consider any exact sequence of the form
\begin{equation}\label{ExtN}
0\to L^{\oplus m}\overset{\iota}{\longrightarrow} N'
\overset{\phi}\longrightarrow L'\to 0.\end{equation}
For each $i=1,\ldots,m$ let
$p_i: L^{\oplus m}\to L$ be the projection to the $i$th component
and let $\theta_i: L\to L^{\oplus m}$ be the corresponding embedding $p_i\theta_i=Id_L$.
Consider a commutative diagram 
\begin{equation}\label{commdia}
\xymatrix{& 0\ar[r] & L^{\oplus m} \ar^{\iota}[r]\ar^{p_i}[d]&
N'\ar^{\phi}[r]\ar^{\psi_i}[d]&L'\ar[r]\ar^{Id}[d]& 0&\\
& 0\ar[r] & L\ar^{\theta_i}[u]
 \ar^{\iota_i}[r]&
M^i\ar^{\phi_i}[r]&L'\ar[r]& 0&
}
\end{equation}
where $\psi_i: N'\to M^i$ is a surjective map with 
$\Ker\psi_i=\iota(\Ker p_i)$. 
The bottom line of this diagram
is an element of $\Ext^1(L',L)$, which we denote by $\Phi_i$.

Assume that
$m>\dim \Ext^1(L',L)$. Then $\{\Phi_i\}_{i=1}^m$ are linearly dependent 
and we can assume that $\Phi_1=0$. This means that $\Phi_1$ splits, so
there exists a projection $\tilde{p}: M^1\to L$ with
$\iota_1\tilde{p}=Id_L$. Then $\tilde{p}\circ\psi_1\circ\iota\circ\theta_1=\Id_L$, so 
$ \tilde{p}\circ\psi_1: N'\to L$ is surjective. Therefore
$[\coSoc N':L]\not=0$, that is 
 $\coSoc N'\not\cong L'$.

Now take $N$ as in (i).  Let $N'$ be the quotient of $N$ by the sum of
all simple submodules which are not isomorphic to $L$.
One has 
$$\coSoc N'=\coSoc N\cong L',\ \ \ \ \Rad N'\cong L^{\oplus m}$$
where $m=\dim \Hom(L,N)$. By above, 
$m\leq \dim \Ext^1(L',L)$; this gives (i).

For (ii)  let $\{\Phi^{(j)}_i\}_{i=1}^{m_j}$ be linearly independent  elements in  $\Ext^1(L',L_j)$:
$$
\Phi^{(j)}_i:\ \ \ \ \ 0\to L_j\overset{\iota_i}{\longrightarrow} M^i\overset{\phi_i}{\longrightarrow} L'\to 0.$$
Consider the exact sequence 
$$0\to \oplus_j L^{\oplus m_j}\longrightarrow  \oplus_j\oplus_i M_i\to (L')^{\oplus \sum_{j=1}^ s m_j}\to 0.$$
Let $diag(L')$ be the diagonal copy of $L'$ in $(L')^{\oplus \sum_{j=1}^ s m_j}$ and
let $N$ be the preimage of $diag(L')$
in $\oplus_j\oplus_i M^i$. This gives the exact sequence 
\begin{equation}\label{ExtN}
0\to \oplus_j L^{\oplus m_j}\overset{\iota}{\longrightarrow} N
\overset{\phi}\longrightarrow L'\to 0\end{equation}
and the commutative diagrams similar to~(\ref{commdia}).
Assume that $N_1\subsetneq N$ is a  submodule of $N$ satisfying
$\phi(N_1)\not=0$.  
Since $\Ker \phi=Im\ \iota$ is completely reducible we have
$$\Ker\phi=(\Ker \phi\cap N_1)\oplus N_2$$
where $N_2\not=0$ is completely reducible. Then $N=N_1\oplus N_2$ and thus
 $N$ can be written as $N=L\oplus N_3$ where $L\subset N_2$
is simple, We can assume that $L\cong L_1$.
Changing the basis in the span of $\{\Phi^{(1)}_i\}_{i=1}^{m_j}$,
we can assume that 
$\iota(\Ker p_1)\subset N_3$. Since
$\Ker\psi_1=\iota (\Ker p_1)$, 
 $\psi_1(L)$ is a non-zero submodule of $M_1$, so
the exact sequence $\Phi_1$ splits, a contradiction.

Hence for each $N_1\subsetneq N$ one has $\phi(N_1)=0$ that is 
 $\Rad N=\Ker\phi$ as required.
\end{proof}

\subsection{Notation and assumptions}\label{axiomsH12}
Let $\fg$ be a Lie superalgebra of at most countable dimension
 with a finite-dimensional even subalgebra $\ft$
satisfying


(Ass$\ft$) $\ft$ acts diagonally on $\fg$ and $\fg_{\ol{0}}^{\ft}=\ft$.

We set $\fh:=\fh^{\ft}$ and choose $h\in\ft$ satisfying

\begin{equation}\label{H2}\ \ \  \fg^{h}=\fh\ \text{ and each non-zero eigenvalue
 of $\ad h$ has a non-zero real part.}\end{equation}

(The assumption on $\dim\fg$ ensures the existence of $h$).
We write $\fg=\fh\oplus (\oplus_{\alpha\in\Delta(\fg)}\fg_{\alpha})$
with  $\Delta(\fg)\subset\ft^*$ and
$$\begin{array}{l}

 \fg_{\alpha}:=\{g\in\fg|\ [h,g]=\alpha(h)g\ \text{ for all }h\in\ft\}.
 \end{array}$$
We introduce the triangular decomposition 
$\Delta(\fg)=\Delta^+(\fg)\coprod \Delta^-(\fg)$,
with
$$\Delta^{\pm}(\fg):=\{\alpha\in\Delta(\fg)|\ \pm Re\, \alpha(h)>0\},$$
 and  define the  partial order on $\ft^*$  by
$$\lambda>\nu\ \text{ if }\nu-\lambda\in\mathbb{N}\Delta^-.$$
We set 
$\fn^{\pm}:=\oplus_{\alpha\in\Delta^{\pm}}\fg_{\alpha}$ and consider 
the Borel subalgebra $\fb:=\fh\oplus\fn^+$.

\subsubsection{}\label{patpat}
Take  $z\in\ft$ satisfying
\begin{equation}\label{condz}
\alpha(z)\in\mathbb{R}_{\geq 0}\ \text{ for  }\alpha\in\Delta^+\ \text{ and }
\alpha(z)\in\mathbb{R}_{\leq 0}\ \text{ for }\alpha\in\Delta^-.
\end{equation}
Consider the superalgebra $\fp(z):=\fg^z+\fb$.
 Notice that 
$$\fp(z)=\fg^z\ltimes\fm,\ \text{ where }\ 
\fm(z):=\displaystyle\sum_{\alpha\in\Delta: \alpha(z)>0}\fg_{\alpha}.$$
Both triples $(\fp(z),\ft,h)$, $(\fg^z,\ft,h)$ satisfy (Ass$\ft$) and (\ref{H2}). One
has $(\fg^z)^{\ft}=\fp^{\ft}=\fh$ and
$$\begin{array}{l}
 \ \Delta^+(\fp(z))=\Delta^+(\fg),\ \ \ \ 
\ \Delta^+(\fg^z)=\{\alpha\in\Delta^{+}(\fg)|\ \alpha(z)=0\}\\
\Delta^-(\fp(z))=\Delta^-(\fg^z)=\{\alpha\in\Delta^{-}(\fg)|\ \alpha(z)=0\}
\end{array}$$

\subsubsection{Modules $M(\lambda),L(\lambda)$}\label{Mlambda}
For  a semisimple  $\ft$-module $N$ we
 denote by $N_{\nu}$ the weight space of the weight $\nu$.
 We denote by $\CO(\fg)$ the full subcategory of 
finitely generated modules with a diagonal action of $\ft$
and locally nilpotent action of $\fn$. Let $\Fin(\fg)$ be the  full subcategory 
of the finite-dimensional $\fg$-modules in $\CO(\fg)$.

By Dixmier generalization of Schur's Lemma (see~\cite{Dix}),
up to a parity change, the simple $\fh$-modules are 
parametrized by
$\lambda\in\ft^*$; we denote by $C_{\lambda}$ a simple
$\fh$-module, where $\ft$ acts by $\lambda$ (for each $\lambda$ we choose a grading on $C_{\lambda}$).
We view $C_{\lambda}$ as a $\fb$-module with the zero action of $\fn$
and set
$$M(\lambda):=\Ind^{\fg}_{\fb} C_{\lambda}.$$  
The module $M(\lambda)$
has a  unique simple quotient which we denote by 
$L(\lambda)$. We set
$$P^+(\fg):=\{\lambda\in\ft^*|\ \dim L(\lambda)<\infty\}.$$

We introduce similarly the modules
$M_{\fp}(\lambda)$, 
 $L_{\fp}(\lambda)$ for the algebra $\fp$.

\subsubsection{}\label{isotypical}
For $N\in\CO(\fg)$ we  will denote by $[N:L]$  the number of simple quotients isomorphic to $L$ or
to $\Pi L$ in a Jordan-H\"older  series of $N$. We say that
$N$ is an ``$L$-isotypical'' if all these quotients
are isomorphic to $L$ or
to $\Pi L$. 
 We introduce $\ext_{\fg}(\lambda;\nu)$ by~(\ref{extintro});
we will usually drop the index $\fg$ and write simply $\ext(\lambda;\nu)$.

\subsection{Remarks}
\label{remKM}
By~\Lem{lemExt} 
$\ \Ext^1_{\CO} (L(\lambda),L(\nu))=\Ext^1 (L(\lambda),L(\nu))$
if $\lambda\not=\nu$.

\subsubsection{}
If $\fg$ is a Kac-Moody superalgebra, then   $\{C_{\lambda}\}_{\lambda\in\ft^*}$
can be chosen in such a way that $\Ext^1_{\CO}  (L(\lambda),\Pi L(\nu))=0$ for all
$\lambda,\nu$ and thus $\ext(\lambda;\nu)=
\dim\Ext^1_{\CO} (L(\lambda),L(\nu))$.

If $\fg=\fq_n$, then $\ext(\lambda;\nu)\not=0$ implies that both
$C_{\lambda}, C_{\nu}$ are either  $\Pi$-invariant or not $\Pi$-invariant;
in this case for $\lambda\not=\nu$ one has
$\ext(\lambda;\nu)=\dim\Ext^1_{\CO} (L^{\Pi}(\lambda),L^{\Pi}(\nu))$, where
$L^{\Pi}(\lambda)$ is the ``$\Pi$-invariant simple module'' appeared in~\cite{PS1},\cite{PS2} 
(in other words, $L^{\Pi}(\lambda)$ is a simple $\fq_1\times\fq_n$-module).

\subsubsection{}\label{naoborot}
If $\fg$ is a Kac-Moody superalgebra, then $\fg$ admits antiautomorphism which stabilizes the elements of
$\ft$ and the category $\CO(\fg)$ admits
 a duality functor $\#$
with the property $L^{\#}\cong L$ for each simple module
$L\in\CO(\fg)$. 
By~\cite{Frisk}, for the $\fq_n$-case  $\CO(\fg)$ admits a duality functor $\#$ 
with the property $L^{\#}\cong L$ up to a parity change. In both cases
\begin{equation}\label{eqnaoborot}
\ext(\lambda;\nu)=\ext(\nu;\lambda).\end{equation}

\subsubsection{Set $\cN(\lambda;\nu;m)$}\label{cNm}
Let $\lambda\not=\nu\in\ft^*$ be such that  $Re (\lambda-\nu)(h)\geq 0$.
If 
$$0\to L(\nu)\to E\to L(\lambda)\to 0$$
is a non-split exact sequence, then $E$ is generated
by $E_{\lambda}\cong C_{\lambda}$, so
$E$ is a quotient of $M(\lambda)$ and $\nu<\lambda$.
For $\lambda,\nu\in\ft^*$ 
we denote by $\cN(\lambda;\nu;m)$ the set of  $\fg$-modules
$N$ satisfying
\begin{equation}\label{Nm}
\coSoc N\cong L(\lambda);\ \ \ \Soc N=\Rad N\ \text{ is $L(\nu)$-isotypical and }\ \ \ [N:L(\nu)]=m.
\end{equation}
 By~\Lem{lemExt} one has
$\ext(\lambda;\nu)=\max\{m|\ \ \cN(\lambda;\nu;m)\not=\emptyset\}$.
 Note that each module $N\in\cN(\lambda;\nu;m)$ is a quotient
of $M(\lambda)=\Ind^{\fg}_{\fb} L_{\fb}(\lambda)$.
We set
\begin{equation}\label{mgp}
m(\fg;\fp;\lambda;\nu):=\max\{m|\ \exists 
 N\in \cN(\lambda;\nu;m)\ \text{ which is a quotient of } \Ind^{\fg}_{\fp} L_{\fp}(\lambda)\}.
 \end{equation}

\subsubsection{}
\begin{cor}{corExtM}
Take $\lambda\not=\nu\in\ft^*$ with $Re (\lambda-\nu)(h)\geq 0$.
One has 
$$\ext(\lambda;\nu)=m(\fg;\fb;\lambda;\nu)\leq \dim M(\lambda)_{\nu}.$$
In particular, $\ext(\lambda;\nu)\not=0$ implies
$\lambda>\nu$.
\end{cor}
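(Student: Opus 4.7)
The corollary essentially packages together observations already established just before its statement; the plan is to unwind three short implications.

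First, I would invoke the identification
$$\ext(\lambda;\nu)=\max\{m\mid \cN(\lambda;\nu;m)\neq\emptyset\}$$
which is already recorded in \ref{cNm} as a consequence of \Lem{lemExt}(ii) applied to the single constituent $L_j=L(\nu)$, together with the converse observation that any module $N$ with $\coSoc N\cong L(\lambda)$ and $L(\nu)$-isotypical socle equal to its radical realizes an instance of \Lem{lemExt}(i). The remark preceding the corollary already notes that every such $N$ is a quotient of $M(\lambda)=\Ind^{\fg}_{\fb}L_{\fb}(\lambda)$, because a lift $v\in N_{\lambda}$ of a generator of $\coSoc N\cong L(\lambda)$ is annihilated by $\fn^+$ (as $\lambda$ is maximal among the weights of $N$ in the order on $\ft^*$) and generates $N$ modulo the radical, hence generates $N$ itself since $\Rad N=\Soc N$ is semisimple. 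Consequently the set of $m$ appearing in the definition of $m(\fg;\fb;\lambda;\nu)$ coincides with the set of $m$ for which $\cN(\lambda;\nu;m)\neq\emptyset$, yielding the equality $\ext(\lambda;\nu)=m(\fg;\fb;\lambda;\nu)$.

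Second, for the inequality I would fix any $N\in\cN(\lambda;\nu;m)$. Since $N$ is a quotient of $M(\lambda)$, the weight spaces satisfy $\dim N_{\nu}\leq \dim M(\lambda)_{\nu}$. On the other hand, $\Rad N=\Soc N$ contains a submodule isomorphic to a direct sum of $m$ copies of $L(\nu)$ (up to parity change), so its $\nu$-weight space has dimension at least $m\cdot\dim L(\nu)_{\nu}=m\dim C_{\nu}\geq m$. Combining, $m\leq \dim M(\lambda)_{\nu}$, and taking the maximum over $m$ gives $\ext(\lambda;\nu)\leq \dim M(\lambda)_{\nu}$.

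Finally, for the last assertion: if $\ext(\lambda;\nu)\neq 0$ then $\dim M(\lambda)_{\nu}\neq 0$ by the inequality, so $\nu$ occurs as a weight of $M(\lambda)=\Ind^{\fg}_{\fb}C_{\lambda}$, whence $\lambda-\nu\in\mathbb{N}\Delta^+$; combined with $\lambda\neq \nu$ this yields $\lambda>\nu$ in the partial order of \ref{axiomsH12}.

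The argument requires no genuinely new input; the only place where one must be slightly careful is verifying that $v$ really generates $N$, which rests on the hypothesis $\Rad N=\Soc N$ built into the definition of $\cN(\lambda;\nu;m)$. In particular the proof works uniformly in all cases considered (including $\fq_m$, where $\dim C_{\nu}$ may exceed one but only strengthens the middle estimate).
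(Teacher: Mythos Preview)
Your proof is correct and follows the same route implicit in the paper: the corollary is stated without proof there because the equality $\ext(\lambda;\nu)=m(\fg;\fb;\lambda;\nu)$ and the implication $\ext(\lambda;\nu)\neq 0\Rightarrow\lambda>\nu$ are immediate from the remarks in~\ref{cNm} (every $N\in\cN(\lambda;\nu;m)$ is a quotient of $M(\lambda)$), and the weight-space bound is the obvious one you give. One small cosmetic point: your justification that $v$ generates $N$ ``since $\Rad N=\Soc N$ is semisimple'' is not the relevant reason---what matters is that $\coSoc N\cong L(\lambda)$ is simple, so $\Rad N$ is the \emph{unique} maximal submodule, hence any element not in $\Rad N$ generates; the semisimplicity of $\Rad N$ plays no role at this step.
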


\subsection{Modules over $\fg\times\fh''$}\label{timesboxmod}
Let  $\fh''$ be a finite-dimensional Lie superalgebra satisfying
$[\fh''_{\ol{0}},\fh'']=0$.  Set
$$ \ft'':=\fh''_{\ol{0}},\ \ \ \fg'=\fg\times\fh'',\ \ 
\fh':=\fh\times\fh'',\ \ \ft'=\ft\times\ft'',\ \ 
\fp':=\fp\times\fh''.$$
Note that the triple $(\fg',\fh', h)$  
 satisfy the assumption (Ass$\ft$) and (\ref{H2}). 
 For $\lambda\in\ft^*$ and $\eta\in (\ft'')^*$
 denote by $\lambda\oplus \eta$ the corresponding element in $(\ft')^*$.
 Let $C_{\lambda}$, $C_{\eta}$, $C_{\lambda\oplus \eta}$
  be the corresponding $\fh$, $\fh''$ and $\fh'$-modules.

  \subsubsection{}\label{boxii}
 By~\cite{GSherman}, we can choose the grading on $C_{\lambda\oplus \eta}$ is such a way that
 $$C_{\lambda}\boxtimes C_{\eta}\cong\left\{\begin{array}{l}
C_{\lambda\oplus \eta}\oplus \Pi C_{\lambda\oplus \eta}\ \ \text{ if $C_{\lambda}$ and $C_{\eta}\ $ are $\Pi$-invariant}\\
  C_{\lambda\oplus \eta} \ \ \text{otherwise}.   \end{array}\right.
$$
Moreover if $C_{\eta}$ is not $\Pi$-invariant, then
 $C_{\lambda\oplus \eta}$ is $\Pi$-invariant if and only if
 $C_{\lambda}$ is $\Pi$-invariant.
 The similar statements hold for $M_{\fp}(\lambda\oplus \eta)$ and 
 for $L_{\fp}(\lambda\oplus \eta)$. 

 \subsubsection{}
If $\fh'=\ft'$, then $C_{\lambda},C_{\nu}, C_{\eta}$ are one-dimensional 
and 
$$\dim \Ext^1_{\CO}(L_{\fg}(\lambda),L_{\fg}(\nu))=\dim \Ext^1_{\CO}(L_{\fg'}(\lambda\oplus \eta),L_{\fg'}(\nu\oplus \eta)).$$ 
By~\ref{boxii} the same formula holds if
 $\lambda>\nu$ and  $C_{\eta}$ is not $\Pi$-invariant
(or if  $\lambda>\nu$ and both
  $C_{\nu}$, $C_{\lambda}$ are not $\Pi$-invariant). 
If $\fg=\fq_n$, then $\ext(\lambda;\nu)\not=0$ implies that both
$C_{\lambda}, C_{\nu}$ are either  $\Pi$-invariant or not $\Pi$-invariant
and the following corollary gives
$\ext(\lambda;\nu)=\ext(\lambda\oplus \eta;\nu\oplus \eta)$.

 \subsubsection{}
 \begin{cor}{corbox}
 \begin{enumerate}
 \item $\ext(\lambda\oplus \eta; \nu\oplus \eta')\not=0$ implies 
 $\eta'=\eta$ and $\ext(\lambda;\nu)\not=0$.
 \item
 Take $\lambda>\nu$. If at least one of the following conditions holds
 \begin{itemize}
 \item
  $C_{\eta}$ is not $\Pi$-invariant
  
  \item $C_{\nu}$ and $C_{\lambda}$ are not $\Pi$-invariant

 \item $C_{\nu}$ and $C_{\lambda}$ are  $\Pi$-invariant
  \end{itemize}
 then  
 the map $N\mapsto N\boxtimes C_{\eta}$ induces a bijection 
between the sets
 $\cN(\lambda;\nu;m)$ and  $\cN(\lambda\oplus\eta;\nu\oplus \eta;m)$. One has
$m(\fg;\fp;\lambda;\nu)=m(\fg';\fp';\lambda\oplus \eta;\nu\oplus \eta)$ and
$$\ext(\lambda;\nu)=\ext(\lambda\oplus \eta;\nu\oplus \eta).$$
  \end{enumerate}
 \end{cor}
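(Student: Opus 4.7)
Both parts rest on two ingredients: first, the centrality of $\ft''$ in $\fg'=\fg\times\fh''$, which controls how composition factors of different $\ft''$-weights can fit into an extension; and second, the external tensor calculation of \ref{boxii}, which describes $C_{\lambda}\boxtimes C_{\eta}$ and (by induction on composition series) governs how the restriction/extension functors interact with $\boxtimes C_{\eta}$. The proposed functor is $F\colon N\mapsto N\boxtimes C_{\eta}$, with inverse obtained from the multiplicity space $\Hom_{\fh''}(C_{\eta},-)$.

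\emph{Part (i).} Let $0\to L(\nu\oplus\eta')\to E\to L(\lambda\oplus\eta)\to 0$ be non-split. Since $[\fh''_{\ol 0},\fh'']=0$, the subalgebra $\ft''=\fh''_{\ol 0}$ is central in $\fg'$; it acts by $\eta'|_{\ft''}$ on the subobject and by $\eta|_{\ft''}$ on the quotient. The $\ft''$-eigenspace decomposition of $E$ is then a $\fg'$-decomposition, so if $\eta|_{\ft''}\neq\eta'|_{\ft''}$ the sequence splits. Hence $\eta'=\eta$. To descend to $\fg$, set $V:=\Hom_{\fh''}(C_{\eta},E)$, a $\fg$-module via the commuting $\fg$-action on $E$. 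Using \ref{boxii} to decompose the restrictions of $L_{\fg'}(\lambda\oplus\eta)$ and $L_{\fg'}(\nu\oplus\eta)$ to $\fh''$ as direct sums of copies of $C_{\eta}$ (possibly $\Pi C_{\eta}$), one identifies $V$ (up to a parity shift on the subobject) as a non-split extension of $L_{\fg}(\lambda)$ by $L_{\fg}(\nu)$ or $\Pi L_{\fg}(\nu)$. By~(\ref{extintro}) this yields $\ext(\lambda;\nu)\neq 0$.

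\emph{Part (ii).} The functor $F$ is exact (external tensor with a finite-dimensional module). Under any of the three listed conditions, \ref{boxii} gives that $L_{\fg}(\lambda)\boxtimes C_{\eta}$ and $L_{\fg}(\nu)\boxtimes C_{\eta}$ are each either $L_{\fg'}(\lambda\oplus\eta)$ resp.\ $L_{\fg'}(\nu\oplus\eta)$, or a direct sum of two $\Pi$-related copies thereof; in either case applying $F$ preserves the properties ``simple cosocle $\cong L(\lambda)$'' and ``radical semisimple and $L(\nu)$-isotypical of multiplicity $m$'' after passing to $\fg'$ (the doubling, if any, occurs uniformly on cosocle and radical and hence affects $m$ trivially once interpreted via $[N:L]$ from~\ref{isotypical}). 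Thus $F$ sends $\cN(\lambda;\nu;m)$ into $\cN(\lambda\oplus\eta;\nu\oplus\eta;m)$. Conversely, given $N'$ on the right, restricting to $\fh''$ produces (by the same listed conditions and the analysis of composition factors) a module $V\otimes C_{\eta}$ with $V=\Hom_{\fh''}(C_{\eta},N')$ canonically a $\fg$-module in $\cN(\lambda;\nu;m)$; this is the inverse of $F$. The identity $\Ind_{\fp'}^{\fg'}(L_{\fp}(\lambda)\boxtimes C_{\eta})\cong\Ind_{\fp}^{\fg}L_{\fp}(\lambda)\boxtimes C_{\eta}$ transports the quotient-of-induced-module condition from~(\ref{mgp}), giving $m(\fg;\fp;\lambda;\nu)=m(\fg';\fp';\lambda\oplus\eta;\nu\oplus\eta)$, and the equality of $\ext$ then follows from \Cor{corExtM} applied to the characterization $\ext=\max\{m:\cN(\lambda;\nu;m)\neq\emptyset\}$.

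\emph{Main obstacle.} The technical difficulty is bookkeeping across the three $\Pi$-invariance cases: when both $C_{\lambda}$ and $C_{\nu}$ are $\Pi$-invariant but $C_{\eta}$ is also $\Pi$-invariant, $C_{\lambda}\boxtimes C_{\eta}$ and $C_{\nu}\boxtimes C_{\eta}$ both split into two summands, and one must verify that $F(N)$ splits compatibly into two isomorphic (or $\Pi$-related) $\fg'$-summands each of which lies in $\cN(\lambda\oplus\eta;\nu\oplus\eta;m)$, and that the inverse functor inverts $F$ up to this splitting. The cleanest route is to carry along the canonical even automorphism $\sigma_{N}\otimes\phi_{\eta}$ (built from the parity grading of $N$ and the odd self-map of $C_{\eta}$) on $N\boxtimes C_{\eta}$, use its $\pm 1$-eigenspace decomposition to identify the two summands, and check the socle/cosocle identifications on one eigenspace, letting $\Pi$-symmetry handle the other. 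Everything else is a routine application of \ref{boxii} and~\ref{cNm}.
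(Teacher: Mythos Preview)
Your argument is correct and follows the same overall strategy as the paper, though you supply considerably more detail than the paper itself. The paper's proof is two sentences: for $\eta=\eta'$ it observes that $\Delta(\fg')=\Delta(\fg)$ has trivial $\ft''$-component, so by \Cor{corExtM} a nonzero $\ext$ forces $(\lambda\oplus\eta)-(\nu\oplus\eta')\in\mathbb{Z}\Delta(\fg')$, hence $\eta=\eta'$; and for everything else it simply says ``other assertions follow from~\ref{boxii}.'' Your centrality argument for $\eta=\eta'$ is the module-theoretic rephrasing of the same root-lattice observation, and your explicit functor $N\mapsto N\boxtimes C_\eta$ with inverse $\Hom_{\fh''}(C_\eta,-)$ is exactly what one needs to unpack the paper's citation of~\ref{boxii}; the $\Pi$-invariance bookkeeping you flag as the main obstacle is genuinely the only subtlety, and your eigenspace-splitting suggestion is a reasonable way to handle it.
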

 \begin{proof}
 Observe that if 
 $\eta'\not=\eta$, then the weight
$\lambda\oplus \eta-\nu\oplus \eta'$ does not lie in
 $\mathbb{Z}\Delta(\fg')$. Combining this observation with~\Cor{corExtM}
 we obtain $\ext(\lambda\oplus \eta; \nu\oplus \eta')=0$
 if $\eta\not=\eta'$. Other assertions follow from~\ref{boxii}. 
 \end{proof}

\subsection{}
The following lemma is a slight reformulation of Lemma 6.3 in~\cite{MS}.

\begin{lem}{lemExtVerma}
Take $\lambda,\nu\in\ft^*$ with $\lambda>\nu$. 

(i) $m(\fg;\fb;\lambda;\nu)\leq m(\fp;\fb;\lambda;\nu)$ if $\nu-\lambda\in \mathbb{N}\Delta^-(\fp)$;

(ii) 
$m(\fg;\fb;\lambda;\nu)=
m(\fg;\fp;\lambda;\nu)$  if $\nu-\lambda\not\in \mathbb{N}\Delta^-(\fp)$.
\end{lem}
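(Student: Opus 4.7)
My plan is to use the $\mathbb{Z}$-grading on any $N\in\CO(\fg)$ induced by the element $z\in\ft$ defining the parabolic $\fp=\fp(z)$ (see~\ref{patpat}). For a quotient $N$ of $M(\lambda)$, every weight $\mu$ of $N$ satisfies $\mu(z)\le\lambda(z)$, with equality precisely when $\mu-\lambda\in\mathbb{N}\Delta^-(\fp)$. Let $N^{\tp}:=N_{\lambda(z)}$ be the top $z$-graded piece. Since $\fm(z)$ strictly raises $z$-grading and $N$ has no weights with $z$-value exceeding $\lambda(z)$, the subalgebra $\fm(z)$ annihilates $N^{\tp}$, so $N^{\tp}$ naturally carries a $\fp$-module structure.

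A preliminary observation I would establish is that for any simple $L_\fg(\mu)\in\CO(\fg)$, the weight piece $L_\fg(\mu)_{\mu(z)}=\cU(\fp)v_\mu$ is isomorphic to $L_\fp(\mu)$ as a $\fp$-module: any nonzero proper $\fp$-submodule would $\fg$-generate a proper $\fg$-submodule of $L_\fg(\mu)$, contradicting simplicity. Applied to the exact sequence $0\to\Soc N\to N\to L_\fg(\lambda)\to 0$ and taking the $z$-value $\lambda(z)$ piece yields
\[
0\to(\Soc N)_{\lambda(z)}\to N^{\tp}\to L_\fp(\lambda)\to 0,
\]
where $(\Soc N)_{\lambda(z)}$ equals $L_\fp(\nu)^{\oplus m}$ if $\nu(z)=\lambda(z)$ and vanishes if $\nu(z)<\lambda(z)$.

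For (ii), the hypothesis $\nu-\lambda\notin\mathbb{N}\Delta^-(\fp)$ forces $\nu(z)<\lambda(z)$, so $(\Soc N)_{\lambda(z)}=0$ and $N^{\tp}\cong L_\fp(\lambda)$. In particular $\cU(\fp)v_\lambda=L_\fp(\lambda)$ in $N$, and Frobenius reciprocity applied to the inclusion $L_\fp(\lambda)\hookrightarrow N$ shows that the canonical surjection $M(\lambda)\twoheadrightarrow N$ factors through $\Ind^\fg_\fp L_\fp(\lambda)$. Combined with the trivial reverse inequality $m(\fg;\fp;\lambda;\nu)\le m(\fg;\fb;\lambda;\nu)$, this gives the asserted equality.

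For (i), the hypothesis gives $\nu(z)=\lambda(z)$, so $N^{\tp}$ is a $\fp$-extension of $L_\fp(\lambda)$ by $L_\fp(\nu)^{\oplus m}$; I claim this extension is nonsplit when $m\ge 1$, so that $N^{\tp}\in\cN_\fp(\lambda;\nu;m)$ and hence $m\le m(\fp;\fb;\lambda;\nu)$. Were $N^{\tp}$ split, then $v_\lambda$ would lie in the $L_\fp(\lambda)$-summand, giving $\cU(\fp)v_\lambda=L_\fp(\lambda)$ and, by Frobenius, realizing $N$ as a quotient of $\Ind^\fg_\fp L_\fp(\lambda)$. But the $\lambda(z)$-weight piece of $\Ind^\fg_\fp L_\fp(\lambda)$ is just $L_\fp(\lambda)$, and a $\fg$-equivariant surjection preserves $z$-eigenvalues, so this would force $N^{\tp}=L_\fp(\lambda)$, contradicting $m\ge 1$. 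The main technical content lies in the preliminary fact about $z$-tops of simple modules; once that is in hand, both parts follow cleanly from the grading analysis.
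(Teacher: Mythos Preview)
Your approach via the $z$-grading functor $N\mapsto N^{\tp}=N_{\lambda(z)}$ coincides with the paper's functor $P_\lambda$, and your preliminary observation is exactly the paper's formula for $P_\lambda(L(\mu))$. Part~(ii) is correct and, in fact, slightly cleaner than the paper's explicit weight analysis of the kernel $J$ of $M(\lambda)\twoheadrightarrow\Ind^{\fg}_{\fp}L_\fp(\lambda)$.

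For~(i), however, there is a genuine gap. You show that the extension $0\to L_\fp(\nu)^{\oplus m}\to N^{\tp}\to L_\fp(\lambda)\to 0$ is nonsplit, and then assert that this gives $N^{\tp}\in\cN_\fp(\lambda;\nu;m)$. But membership in $\cN_\fp(\lambda;\nu;m)$ requires $\coSoc N^{\tp}=L_\fp(\lambda)$, i.e.\ that \emph{no} copy of $L_\fp(\nu)$ splits off as a direct summand; ``nonsplit'' only says that not all $m$ copies split off simultaneously. Concretely, if $E_0$ is a nonsplit extension of $L_\fp(\lambda)$ by $L_\fp(\nu)$, then $E_0\oplus L_\fp(\nu)$ is a nonsplit extension of $L_\fp(\lambda)$ by $L_\fp(\nu)^{\oplus 2}$ whose cosocle is $L_\fp(\lambda)\oplus L_\fp(\nu)$. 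Your contradiction argument only fires when $N^{\tp}$ is \emph{completely} split (so that the weight-$\lambda$ space sits inside the $L_\fp(\lambda)$-summand); a partial splitting survives it unscathed.

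The fix is precisely the step the paper takes: since $N$ is a quotient of $M(\lambda)$ and the functor $(-)^{\tp}$ is exact, $N^{\tp}$ is a quotient of $M(\lambda)^{\tp}=M_\fp(\lambda)$, hence is generated by its $\lambda$-weight space $C_\lambda$, which forces $\coSoc N^{\tp}=L_\fp(\lambda)$ directly. With that in place the desired conclusion $N^{\tp}\in\cN_\fp(\lambda;\nu;m)$ is immediate, and your contradiction argument becomes unnecessary.
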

\begin{proof}
For a semisimple $\ft$-module $N$ we denote by
 $\Omega(N)$ the set of weights of $N$.
 For each $\fg$-module $M$ we set
$$P_{\lambda}(M):=\{v\in M| \ zv=\lambda(z)v\}.$$
This defines an exact functor from $\fg-Mod$ to
$\fg^z-Mod$. Recall that  $\fp=\fg^z\ltimes\fm$ (see~\ref{patpat}).
Viewing
 $P_{\lambda}(M)$ as a $\fp$-module with the zero action of $\fm$
we obtain an exact functor 
$P_{\lambda}: \fg-Mod\to \fp-Mod$. By the PBW Theorem
$$
P_{\lambda}(M(\mu))=\left\{\begin{array}{ll}
M_{\fp}(\mu) &\ \text{ if } \mu(z)=\lambda(z)\\
0&\ \text{ if }  (\lambda-\mu)(z)>0\end{array}\right.
$$
Let us show that 
\begin{equation}\label{Res}
P_{\lambda}(L(\mu))=\left\{\begin{array}{ll}
L_{\fp}(\mu) &\ \text{ if }  \mu(z)=\lambda(z)\\
0&\ \text{ if } (\lambda-\mu)(z)>0.\end{array}\right.\end{equation}
Indeed, since $P_{\lambda}$ is exact, 
$P_{\lambda}(L(\mu))$ is a quotient of $P_{\lambda}(M(\mu))$; this gives the second formula.
For the first formula assume that $\mu(z)=\lambda(z)$ and that $E$ is a
proper submodule of 
$P_{\lambda}(L(\mu))$. 
Since $P_{\lambda}(L(\mu))$ is a quotient of $P_{\lambda}(M(\mu))$
and
$$(P_{\lambda}(M(\mu)))_{\mu}=(M_{\fp}(\mu))_{\mu}=C_{\mu}$$
is a simple $\fh$-module, one has $\gamma<\mu$ for each $\gamma\in\Omega(E)$. 
Since $E$ is a $\fp$-module and $\fg=\fn^-+\fp$, we have
$\cU(\fg)E=\cU(\fn^-)E$. Therefore $(\cU(\fn^-)E)_{\mu}=0$, so
$\cU(\fg)E$ is a proper $\fg$-submodule of $L(\mu)$. Hence $E=0$, so
 $P_{\lambda}(L(\mu))$ is simple. This  establishes~(\ref{Res}).

Fix $N\in \cN(\lambda;\nu;m)$ where $m:=m(\fg;\fb;\lambda;\nu)$.

For (i) consider the case when $\nu-\lambda\in \mathbb{N}\Delta^-(\fp)$.
Then $\lambda(z)=\nu(z)$, so~(\ref{Res}) gives
$$P_{\lambda}(L(\nu))=L_{\fp}(\nu),\ \ \ \ P_{\lambda}(L(\nu))=L_{\fp}(\lambda).$$
Since $P_{\lambda}$ is exact, 
$P_{\lambda}(N)$ is a quotient of $M_{\fp}(\lambda)$. Using~(\ref{Res}) we conclude 
that the $\fp$-module
$P_{\lambda}(N)$ lies in the set $\cN(\lambda;\nu;m)$ (defined for $\fp$ isntaed
of $\fg$).
Therefore $m\leq m(\fp;\fb;\lambda;\nu)$. This establishes (i).

For (ii)  assume that $\nu-\lambda\not\in \mathbb{N}\Delta^-(\fp)$.
 Let us show  that $N$ is a quotient of $\Ind_{\fp}^{\fg} L_{\fp}(\lambda)$. 
Write
$$\Ind_{\fp}^{\fg} L_{\fp}(\lambda)=M(\lambda)/J,\ \ \ L_{\fp}(\lambda)=M_{\fp}(\lambda)/J'$$
where $J$ (resp., $J'$) is the corresponding submodule of $M(\lambda)$
(resp., of  $M_{\fp}(\lambda)$).
Since $\Ind^{\fg}_{\fp}$ is exact and $\Ind^{\fg}_{\fp} M_{\fp}(\lambda)=M(\lambda)$
one has
$J\cong \Ind^{\fg}_{\fp} J'$; in particular, 
 each maximal element in $\Omega(J)$ lies in $\Omega(J')$.
Note that 
$$\Omega(J')\subset \lambda+\mathbb{N}\Delta^-(\fp).$$

Let $\phi: M(\lambda)\twoheadrightarrow N$ be the canonical surjection.
Since $J_{\lambda}=0$, $\phi(J)$ is a proper submodule
of $N$, so  $\phi(J)$ is a submodule of
$\Soc(N)=L(\nu)^{\oplus m}$. 

If $\phi(J)\not=0$, then $\nu$ is a maximal element
in $\Omega(J)$ and so $\nu\in \lambda+\mathbb{N}\Delta^-(\fp)$
which contradicts to $\nu-\lambda\not\in\mathbb{N}\Delta^-(\fp)$.
Therefore $\phi(J)=0$, so $\phi$ induces a map 
$$\Ind_{\fp}^{\fg} L_{\fp}(\lambda)=M(\lambda)/J \twoheadrightarrow N.$$
Hence $N$ is a quotient of $\Ind_{\fp}^{\fg} L_{\fp}(\lambda)$
which gives $m\leq  m(\fg;\fp;\lambda;\nu)$.
Since $\Ind_{\fp}^{\fg} L_{\fp}(\lambda)$ is a quotient of $M(\lambda)$,
we have 
$m(\fg;\fb;\lambda;\nu)\geq m(\fg;\fp;\lambda;\nu)$.
Thus $m(\fg;\fb;\lambda;\nu)=m(\fg;\fp;\lambda;\nu)$ as required.
\end{proof}

\subsection{}\label{zi}
Take $z_1,\ldots,z_{k-1}\in\ft$ satisfying~(\ref{condz}) and 
the condition $\fg^{z_i}\subset\fg^{z_{i+1}}$. Setting 
$\fg^{(i)}:=\fg^{z_i}$ we obtain the chain
\begin{equation}\label{chain}
\fh=:\fg^{(0)}\subset\fg^{(1)}\subset \fg^{(2)}\subset\ldots\subset \fg^{(k)}:=\fg.\end{equation}
We introduce
$\tilde{\fp}^{(i)}:=\fg^{(i)}+\fb$ and 
$\fp^{(i)}:=\tilde{\fp}^{(i-1)} \cap \fg^{(i)}$ with $\tilde{\fp}^{(0)}=\fp^{(0)}:=\fb$;
note that $\fp^{(i)}$ (resp., $\tilde{\fp}^{(i)}$) is a parabolic subalgebra
 in $\fg^{(i)}$ (resp., in $\fg$).

\subsubsection{}\label{formulaeforpt}
Taking $z_0:=h$ as in (\ref{H2}) and $z_k:=0$ we obtain for $s=1,\ldots,k$
$$\begin{array}{ll}
\tilde{\fp}^{(s-1)}=\displaystyle\sum_{\alpha:\ \alpha(z_{s-1})\geq 0}\fg_{\alpha}\ \subset\ 
\tilde{\fp}^{(s)}=\displaystyle\sum_{\alpha:\ \alpha(z_{s})\geq 0}\fg_{\alpha},\\ 
{\fp}^{(s)}=\displaystyle\sum_{\substack{\alpha:\ \alpha(z_{s-1})\geq 0\\
\alpha(z_s)=0}}\fg_{\alpha}.
\end{array}$$

In particular, $\tilde{\fp}^{(s)}=\fg^{(s)}\ltimes\tilde{\fm}^{(s)}$
and ${\fp}^{(s)}=\fg^{(s-1)}\ltimes {\fm}^{(s)}$
where
$$ \tilde{\fm}^{(s)}:=\sum_{\alpha: \alpha(z_s)>0} \fg_{\alpha},\ \ \ \ \ \ \ \ \fm^{(s)}:=\displaystyle\sum_{\substack{\alpha:\ \alpha(z_{s-1})>0\\
\alpha(z_s)=0}}\fg_{\alpha}.$$
One has 
$\tilde{\fm}^{(i+1)}\subset\tilde{\fm}^{(i)}$
(since  $\fg^{(i)}\cap \fn$ can be identified with $\fn/\tilde{\fm}^{(i)}$).

\subsubsection{}
\begin{cor}{corExtchain}
For $\lambda>\nu$ one has
$$\ext(\lambda;\nu)\leq m(\fg^{(s)};\fp^{(s)};\lambda;\nu)=
\ext_{\fg^{(s)}}(\lambda;\nu)$$
where $s$ is minimal such that $\nu-\lambda\in\mathbb{N}\Delta^-(\fg^{(s)})$.
\end{cor}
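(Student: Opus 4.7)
The plan is to descend the chain~(\ref{chain}) from $\fg^{(k)}=\fg$ to $\fg^{(s)}$ by alternating two moves: \Lem{lemExtVerma}(i) to pass from $\fg^{(i)}$ to the parabolic $\fp^{(i)}\subset\fg^{(i)}$, and a direct argument that the nilpotent radical $\fm^{(i)}$ of $\fp^{(i)}$ acts trivially on every module in $\cN(\lambda;\nu;m)$, which identifies $\fp^{(i)}$-extensions with $\fg^{(i-1)}$-extensions.

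For the step $\fg^{(i)}\rightsquigarrow \fg^{(i-1)}$ with $i>s$, the triple $(\fp^{(i)},\ft,h)$ still satisfies (Ass$\ft$) and~(\ref{H2}), so \Cor{corExtM} applies inside $\fp^{(i)}$. The formulas in~\ref{formulaeforpt} give $\Delta^-(\fp^{(i)})=\Delta^-(\fg^{(i-1)})$, and by minimality of $s$ one has
$$\nu-\lambda\in \mathbb{N}\Delta^-(\fg^{(s)})\subset \mathbb{N}\Delta^-(\fg^{(i-1)})=\mathbb{N}\Delta^-(\fp^{(i)}),$$
so \Lem{lemExtVerma}(i) applied with $\fg=\fg^{(i)}$, $\fp=\fp^{(i)}$ yields $\ext_{\fg^{(i)}}(\lambda;\nu)\leq \ext_{\fp^{(i)}}(\lambda;\nu)$.

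The core step is to prove $\ext_{\fp^{(i)}}(\lambda;\nu)=\ext_{\fg^{(i-1)}}(\lambda;\nu)$ by showing that $\fm^{(i)}\cdot N=0$ for each $N\in\cN(\lambda;\nu;m)$ viewed inside $\fp^{(i)}$. Such an $N$ is a quotient of $M_{\fp^{(i)}}(\lambda)$ by~\ref{cNm}, and since $\fn^-(\fp^{(i)})=\fn^-(\fg^{(i-1)})$ its weights lie in $\lambda-\mathbb{N}\Delta^+(\fg^{(i-1)})$. Any $x\in\fm^{(i)}$ has weight $\alpha\in\Delta^+(\fg^{(i)})\setminus\Delta^+(\fg^{(i-1)})$, and $\lambda+\alpha$ is not of the above form (as $-\alpha$ has negative $h$-value while $\mathbb{N}\Delta^+(\fg^{(i-1)})$ has non-negative $h$-value); hence $x\cdot N_\lambda=0$, giving $N_\lambda\subset N^{\fm^{(i)}}$. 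Since $\fm^{(i)}$ is an ideal in $\fp^{(i)}$, the subspace $N^{\fm^{(i)}}$ is a $\fp^{(i)}$-submodule, and since $N$ is generated as a $\fp^{(i)}$-module by $N_\lambda$, we conclude $N=N^{\fm^{(i)}}$. This identifies $\cN(\lambda;\nu;m)$ inside $\fp^{(i)}$ with the analogous set inside $\fg^{(i-1)}$ (via inflation along $\fp^{(i)}\twoheadrightarrow\fg^{(i-1)}$).

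Iterating these two moves from $i=k$ down to $i=s+1$ yields $\ext(\lambda;\nu)\leq \ext_{\fg^{(s)}}(\lambda;\nu)$. For the final equality $\ext_{\fg^{(s)}}(\lambda;\nu)=m(\fg^{(s)};\fp^{(s)};\lambda;\nu)$: by minimality of $s$ one has $\nu-\lambda\not\in\mathbb{N}\Delta^-(\fg^{(s-1)})=\mathbb{N}\Delta^-(\fp^{(s)})$, so \Lem{lemExtVerma}(ii) inside $\fg^{(s)}$ with $\fp=\fp^{(s)}$ gives $m(\fg^{(s)};\fb;\lambda;\nu)=m(\fg^{(s)};\fp^{(s)};\lambda;\nu)$, and \Cor{corExtM} completes the argument. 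The only non-routine ingredient is the $\fm^{(i)}$-vanishing argument; everything else is bookkeeping with the root subsystems recorded in~\ref{formulaeforpt}.
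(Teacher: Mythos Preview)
Your proof is correct and uses the same two key ingredients as the paper: \Lem{lemExtVerma} and the observation that the nilradical of the relevant parabolic annihilates every module in $\cN(\lambda;\nu;m)$ (since such a module is generated by its highest weight space and the nilradical is an ideal).

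The only organizational difference is that you iterate down the chain one step at a time via the parabolics $\fp^{(i)}\subset\fg^{(i)}$, whereas the paper makes a single jump using the parabolic $\tilde{\fp}^{(s)}=\fg^{(s)}+\fb\subset\fg$: it applies \Lem{lemExtVerma}(i) once to get $\ext(\lambda;\nu)\leq m(\tilde{\fp}^{(s)};\tilde{\fp}^{(s-1)};\lambda;\nu)$, then kills $\tilde{\fm}^{(s)}$ in one stroke to identify this with $m(\fg^{(s)};\fp^{(s)};\lambda;\nu)$. Your iterative version and the paper's direct version are equivalent; the paper's is marginally shorter, but yours makes the inductive structure of the chain more visible. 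For the final equality you invoke \Lem{lemExtVerma}(ii) directly, while the paper obtains it by re-applying the just-established inequality~(\ref{ext11}) with $\fg$ replaced by $\fg^{(s)}$; again these are interchangeable.
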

\begin{proof} 
Combining~\Cor{corExtM} and~\Lem{lemExtVerma} we obtain
$$\ext(\lambda;\nu)=m(\fg;\fb;\lambda;\nu)\leq m(\tilde{\fp}^{(s)};\tilde{\fp}^{(s-1)};\lambda;\nu).$$
The $\tilde{\fp}^{(s)}$-module
 $N:=\Ind^{\tilde{\fp}^{(s)}}_{\tilde{\fp}^{(s-1)}} L_{\tilde{\fp}^{(s-1)}}(\lambda)$ is generated by its highest weight space $N_{\lambda}$,
Since $\tilde{\fm}^{(s)}\subset\fn$ is an ideal of $\fp^{(s)}$, $N^{\tilde{\fm}^{(s)}}$
is a submodule of $N$, which implies
$\tilde{\fm}^{(s)} N=0$. Hence 
 $N$ is a module over $\tilde{\fp}^{(s)}/\tilde{\fm}^{(s)}=\fg^{(s)}$.
This gives
$$m(\tilde{\fp}^{(s)};\tilde{\fp}^{(s-1)};\lambda;\nu)=m(\fg^{(s)};\tilde{\fp}^{(s-1)}/\tilde{\fm}^{(s)};\lambda;\nu).$$
Using the formulae from~\ref{formulaeforpt} we see
that  $\fp^{(s)}$ is the image of $\tilde{\fp}^{(s-1)}$ in $\tilde{\fp}^{(s)}/\tilde{\fm}^{(s)}=\fg^{(s)}$.  Therefore
$m(\fg^{(s)};\tilde{\fp}^{(s-1)}/\fm^{(s)};\lambda;\nu)=m(\fg^{(s)};\fp^{(s)};\lambda;\nu)$
and thus
\begin{equation}\label{ext11}
\ext(\lambda;\nu)\leq m(\fg^{(s)};\fp^{(s)};\lambda;\nu).
\end{equation}

Clearly, $m(\fg^{(s)};\fp^{(s)};\lambda;\nu)\leq \ext_{\fg^{(s)}}(\lambda;\nu)$. Using~(\ref{ext11}) for
$\fg^{(s)}$ we obtain
$$\ext_{\fg^{(s)}}(\lambda;\nu)\leq m(\fg^{(s)};\fp^{(s)};\lambda;\nu).$$
Thus $\ext_{\fg^{(s)}}(\lambda;\nu)= m(\fg^{(s)};\fp^{(s)};\lambda;\nu)$. Now~(\ref{ext11}) gives the required formula.
\end{proof}

%
%
%

\subsection{Functors $\Gamma_{\bullet}^{\fg,\fp}$}\label{Gamma}

For a  parabolic subalgebra
$\fp\subset \fg$ and a finite-dimensional $\fp$-module $V$  we denote by $\Gamma^{\fg,\fp}(V)$
a maximal finite-dimensional quotient of $\Ind^{\fg}_{\fp} (V)$.
It is easy to see that this quotient is unique and
that for any  finite-dimensional quotient $N$ of  $\Ind^{\fg}_{\fp} (V)$
there exists an epimorphism $\Gamma^{\fg,\fp}(V) \twoheadrightarrow N$.

\subsubsection{}\label{Gammagp}
In~\cite{Penkov}, I.~Penkov introduced important functors from $\Fin(\fp)$ to $\Fin(\fg)$. We will use a modification of these functors which appeared in~\cite{GS} and other papers. These functors 
$\Gamma_{\bullet}=\{\Gamma_i\}_0^{\infty}$ have the following properties
\begin{itemize}
\item
$\Gamma_0^{\fg,\fp}(V)=\Gamma^{\fg,\fp}(V)$;
\item Each short exact sequence of $\fp$-modules
$$0\to U\to V\to U'\to 0$$
induces a long exact sequence
$$\ldots\to \Gamma_1^{\fg,\fp}(V)\to \Gamma_1^{\fg,\fp}(U')\to \Gamma_0^{\fg,\fp}(U)\to \Gamma_0^{\fg,\fp}(V)\to
\Gamma_0^{\fg,\fp}(U')\to 0.$$
\end{itemize}

Until the end of this section we assume the existence of  $\Gamma_{\bullet}$ satisfying the above properties. Observe that  $[\Gamma^{\fg,\fp}_0(L_{\fp}(\lambda)):
 L_{\fg}(\lambda)]=1$ if $\lambda\not\in P^+(\fg)$: we set
$$\begin{array}{l}
K^j(\lambda;\nu):=[\Gamma_j^{\fg,\fp}(L_{\fp}(\lambda)):
 L_{\fg}(\nu)]-\delta_{0j}\delta_{\lambda\nu}.\end{array}$$
 Observe that
$\Gamma^{\fg,\fp}(L_{\fp}(\lambda))=0$
if $\lambda\not\in P^+(\fg)$; for $\lambda\in P^+(\fg)$ one has
 \begin{equation}\label{RadN}
\coSoc \Gamma^{\fg,\fp}(L_{\fp}(\lambda))=L_{\fg}(\lambda)\ \ \ \ K^0(\lambda;\mu)=[\Rad (\Gamma^{\fg,\fp}(L_{\fp}(\lambda))): L_{\fg}(\mu)].
\end{equation}
 In particular,
 $K^0(\lambda;\nu)\not=0$ implies $\nu<\lambda$.

\subsubsection{}
\begin{lem}{lem331}
Let $\lambda,\nu\in P^+(\fg)$ with $\nu<\lambda$ be such that
$$\begin{array}{llll}
\forall\mu\not=\nu\ & \ K^0(\lambda;\mu)\not=0 & \Longrightarrow\ &
\ext(\mu;\nu)=0 \end{array}.$$
If  $K^0(\lambda;\nu)=1$ or $\ext(\nu;\nu)=0$,
then $K^0(\lambda;\nu)\leq \ext(\lambda;\nu)$.
\end{lem}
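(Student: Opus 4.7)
The plan is to apply \Lem{lemExt}(i) to a carefully chosen quotient of $N:=\Gamma^{\fg,\fp}(L_{\fp}(\lambda))$. By~(\ref{RadN}) the module $N$ has simple cosocle $L(\lambda)$ and $[\Rad N:L(\mu)]=K^0(\lambda;\mu)$ for every $\mu$; set $R:=\Rad N$ and $k:=K^0(\lambda;\nu)$.

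First I let $R'\subset R$ be the sum of all submodules of $R$ whose composition factors avoid both $L(\nu)$ and $\Pi L(\nu)$. Then $R'$ is itself such a submodule (sums and quotients preserve the property), so it is the unique maximal one and $[R/R':L(\nu)]=k$ automatically. The key technical step is to show that $T:=R/R'$ is $L(\nu)$-isotypical. Letting $T_1\subset T$ be the maximal $L(\nu)$-isotypical submodule, I assume $T\neq T_1$; maximality of $T_1$ then forces some non-$L(\nu)$-type simple $L(\mu)$ to embed into $\Soc(T/T_1)$, yielding a short exact sequence $0\to T_1\to \tilde T\to L(\mu)\to 0$ with $\tilde T\subset T$. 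Since $L(\mu)$ is a composition factor of $R$ we have $K^0(\lambda;\mu)\neq 0$, so the hypothesis gives $\ext(\mu;\nu)=0$, equivalently $\Ext^1(L(\mu),L(\nu))=\Ext^1(L(\mu),\Pi L(\nu))=0$. A length induction on $T_1$ via the long exact $\Ext$-sequence upgrades this to $\Ext^1(L(\mu),T_1)=0$, so the sequence splits. Pulling the resulting copy of $L(\mu)$ back to $R$ produces a submodule strictly larger than $R'$ with no $L(\nu)$-type composition factor, contradicting the maximality of $R'$. Hence $T=T_1$, as claimed.

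Next I verify that $R/R'$ is semisimple: if $k=1$ this is automatic, while if $\ext(\nu;\nu)=0$ it follows from a length induction using the vanishing of all $\Ext^1$'s among $\{L(\nu),\Pi L(\nu)\}$. The quotient $N/R'$ then has simple cosocle $L(\lambda)$ and semisimple radical $R/R'$ with $[R/R':L(\nu)]=k$; to conclude $N/R'\in \cN(\lambda;\nu;k)$ I still need $\Soc(N/R')=\Rad(N/R')$, equivalently that $L(\lambda)$ is not a submodule of $N/R'$. If it were, then $N/R'$ would split as $L(\lambda)\oplus R/R'$, producing a nonzero homomorphism $N\to R/R'$; but $\Hom(N,L(\nu))=\Hom(N,\Pi L(\nu))=0$ since $\coSoc N=L(\lambda)\neq L(\nu),\Pi L(\nu)$, so $\Hom(N,R/R')=0$, a contradiction.

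Finally I apply \Lem{lemExt}(i) to $N/R'$ with $L'=L(\lambda)$ and $L$ taken to be $L(\nu)$ and $\Pi L(\nu)$ in turn; summing the two resulting inequalities gives
$$k=[\Soc(N/R'):L(\nu)]\leq \dim\Ext^1(L(\lambda),L(\nu))+\dim\Ext^1(L(\lambda),\Pi L(\nu))=\ext(\lambda;\nu),$$
with the obvious collapse to a single term in the $\Pi$-invariant case. The delicate point is the splitting argument in the second paragraph: it is precisely where the hypothesis $K^0(\lambda;\mu)\neq 0\Rightarrow \ext(\mu;\nu)=0$ is consumed, and the only place in the proof where it is needed.
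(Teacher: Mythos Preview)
Your proof is correct and follows the same strategy as the paper: pass to a quotient of $N=\Gamma^{\fg,\fp}(L_{\fp}(\lambda))$ whose radical is semisimple and $L(\nu)$-isotypical, then invoke \Lem{lemExt}(i). The paper's two-line argument simply asserts that ``the isotypical component of $L_{\fg}(\nu)$ is a direct summand of $\Rad N$''; your construction of $R'$ and the splitting step make this precise, and in fact your quotient argument only consumes the one-sided vanishing $\Ext^1(L(\mu),L(\nu)\text{-type})=0$ that the hypothesis actually supplies.
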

\begin{proof}
By~(\ref{RadN}) in both cases the isotypical component of
$L_{\fg}(\nu)$ is a direct summand of $\Rad \Gamma^{\fg,\fp} (L_{\fp}(\lambda))$, so~\Lem{lemExt} (i) gives the required  inequality.
\end{proof}

\subsubsection{}
Retain notation of~\ref{patpat} and recall that $\fp=\fg^z\ltimes \fm$. 

\begin{lem}{lem332}
Let $\lambda,\nu\in P^+(\fg)$ with $\nu<\lambda$ be such that
$$\begin{array}{lllll}
(a) & & K^1(\lambda;\nu)=0\\
(b) & \forall\mu & K^0(\lambda;\mu)\not=0 & \Longrightarrow\ &
\ext(\nu;\mu)=0.\end{array}$$

Then $\ext_{\fg^z}(\lambda;\nu) \leq \ext (\lambda;\nu).$
\end{lem}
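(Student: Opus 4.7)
The plan is to prove $\ext(\lambda;\nu) \geq m := \ext_{\fg^z}(\lambda;\nu)$ by constructing an explicit element of $\cN_\fg(\lambda;\nu;m)$ from a chosen $E \in \cN_{\fg^z}(\lambda;\nu;m)$ (the case $m=0$ being trivial). Using $\fp = \fg^z \ltimes \fm$, view $E$ as a $\fp$-module with trivial $\fm$-action; its radical $R$ is then semisimple and $L_\fp(\nu)$-isotypical of total multiplicity $m$. Applying $\Gamma_\bullet := \Gamma_\bullet^{\fg,\fp}$ to $0 \to R \to E \to L_\fp(\lambda) \to 0$ gives
$$\Gamma_1(L_\fp(\lambda)) \to \Gamma_0(R) \to \tilde{N} \to M_\lambda \to 0,$$
where $\tilde{N} := \Gamma_0(E)$ and $M_\lambda := \Gamma_0(L_\fp(\lambda))$, and the goal is to trim $\tilde{N}$ into the desired module $N''$.

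Let $I := \im(\Gamma_0(R) \to \tilde{N})$. Since $\Gamma_0$ is additive and compatible with $\Pi$, the module $\Gamma_0(R)$ is $M_\nu$-isotypical and, by~(\ref{RadN}), has cosocle $L_\fg(\nu)$-isotypical of multiplicity $m$. Hypothesis (a) forces $\im(\Gamma_1(L_\fp(\lambda)) \to \Gamma_0(R))$ to contain no $L_\fg(\nu)$-composition factor and hence to lie in $\Rad \Gamma_0(R)$, so $\coSoc I$ remains $L_\fg(\nu)$-isotypical of multiplicity $m$. Set $I_0 := \Rad I$, $S := I/I_0$ (semisimple, $L_\fg(\nu)$-isotypical of total multiplicity $m$), and $N := \tilde{N}/I_0$; one obtains $0 \to S \to N \xrightarrow{\pi} M_\lambda \to 0$. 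The composition factors of $\Rad M_\lambda$ are the $L_\fg(\mu)$ with $K^0(\lambda;\mu)\neq 0$, and hypothesis (b) together with self-duality~(\ref{eqnaoborot}) yields $\Ext^1(L_\fg(\mu), L_\fg(\nu)) = 0$ and its parity variant for each such $\mu$. Inducting on the composition length of $\Rad M_\lambda$ one deduces $\Ext^1(\Rad M_\lambda, S) = 0$, so the pullback extension $0 \to S \to \pi^{-1}(\Rad M_\lambda) \to \Rad M_\lambda \to 0$ splits; I will pick a lifting $R' \hookrightarrow \pi^{-1}(\Rad M_\lambda)$ and set $N'' := N/R'$, producing $0 \to S \to N'' \to L_\fg(\lambda) \to 0$.

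It remains to verify $N'' \in \cN_\fg(\lambda;\nu;m)$. Since $(\Ind^\fg_\fp E)_\lambda = E_\lambda = C_\lambda$, the module $\tilde{N}$ is cyclic with $\tilde{N}_\lambda = C_\lambda$, and the same holds for $N''$; moreover the weights of $\tilde{N}$ (hence of $N''$) lie in the cone $\leq \lambda$. Any simple quotient $L$ of $N''$ therefore has $\lambda$ among its weights (as $N''_\lambda$ must map nontrivially to $L$) while all weights of $L$ are $\leq \lambda$, forcing the highest weight of $L$ to equal $\lambda$; combined with $[N'':L_\fg(\lambda)] = 1$ this gives $\coSoc N'' = L_\fg(\lambda)$. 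Then $\Rad N'' = S$ is semisimple, and $\Soc N''$ coincides with $\Rad N''$ because any embedding $L_\fg(\lambda) \hookrightarrow N''$ would combine with the simple cosocle to split the defining SES and enlarge $\coSoc N''$ beyond $L_\fg(\lambda)$, a contradiction. This places $N''$ in $\cN_\fg(\lambda;\nu;m)$ and yields the desired inequality. The principal obstacle is the splitting step, which hinges on converting hypothesis (b) into vanishing of $\Ext^1(L_\fg(\mu), L_\fg(\nu))$ via the self-duality~(\ref{eqnaoborot}); this is precisely what dictates the asymmetric formulation of (b) as $\ext(\nu;\mu) = 0$ rather than $\ext(\mu;\nu) = 0$.
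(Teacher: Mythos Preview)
Your argument is correct and follows the same route as the paper's: inflate an element of $\cN_{\fg^z}(\lambda;\nu;m)$ to a $\fp$-module, push it through $\Gamma_\bullet^{\fg,\fp}$, use (a) to keep the full $L_\fg(\nu)$-multiplicity in the cosocle of the image, and then use (b) together with self-duality~(\ref{eqnaoborot}) (which the paper also invokes, though tacitly) to split off $\Rad\Gamma_0(L_\fp(\lambda))$ and land in $\cN_\fg(\lambda;\nu;m)$. Your closing editorial remark is slightly off, however: had (b) been formulated as $\ext(\mu;\nu)=0$ it would yield $\Ext^1(L_\fg(\mu),L_\fg(\nu))=0$ directly without any appeal to duality, so the order $\ext(\nu;\mu)$ is not ``dictated'' by the splitting step.
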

\begin{proof}
Without loss of generality we will assume that $m:=\ext_{\fg^z}(\lambda;\nu)>0$.
 By~\Lem{lemExt}
there exists an indecomposable $\fg^{z}$-module $N_1$ with a short exact sequence
$$0\to L_{\fg^{z}}(\nu)^{\oplus m}\to N_1\to L_{\fg^{z}}(\lambda)\to 0$$ 
where $L_{\fg^{z}}(\nu)^{\oplus m}$ stands for the direct sum of $m_0$ copies
of $L_{\fg^{z}}(\nu)$ and $m_1$ copies
of $\Pi L_{\fg^{z}}(\nu)$ with $m_0+m_1=m$.
Since  $\fp=\fg^{z}\ltimes\fm$, the corresponding $\fp$-module 
$N_2:=\Res_{\fp}^{\fg^{z}} N_1$ 
is an indecomposable module with a short exact sequence
$$0\to L_{\fp}(\nu)^{\oplus m}\to N_2 \to L_{\fp}(\lambda)\to 0.$$
Consider the corresponding  long exact sequence of $\fg$-modules
$$\begin{array}{l}
\ldots \to \Gamma_1^{\fg,\fp} (L_{\fp}(\lambda))^{\oplus m}\overset{\phi}{\to}
\Gamma_0^{\fg,\fp}
(L_{\fp}(\nu))^{\oplus m}
\to\Gamma_0^{\fg,\fp}(N_2)\to \Gamma_0^{\fg,\fp}(L_{\fp}(\lambda))\to 0.\end{array}$$
Recall that $\ \coSoc \Gamma_0^{\fg,\fp}
(L_{\fp}(\nu))=L_{\fg}(\nu)$.
Since $K^1(\lambda;\nu)=0$ the image of $\phi$ lies in $\Rad \Gamma_0^{\fg,\fp}
(L_{\fp}(\nu))^{\oplus m}$. Thus $\Gamma_0^{\fg^{},\fp^{}}(N_2)$ has a quotient $N_3$  with
the short exact sequence
\begin{equation}\label{N3}
0\to L_{\fg^{}}(\nu)^{\oplus m}\to N_3\to \Gamma_0^{\fg^{},\fp^{}}(L_{\fp^{}}(\lambda))\to 0.\end{equation}
Since $N_2$ is indecomposable, it is generated by 
its $\lambda$-weight space $(N_2)_{\lambda}$. Since
$N_3$ is a  quotient of $\Gamma_0^{\fg^{},\fp^{}}(N_2)$
which is a quotient of $\Ind^{\fg^{}}_{\fp^{}}(N_2)$,
$N_3$ is also generated by 
its $\lambda$-weight space. Hence $N_3$ is indecomposable and 
$$\coSoc (N_3)\cong L_{\fg}(\lambda)\cong \coSoc\,\bigl(
\Gamma_0^{\fg^{},\fp^{}}(L_{\fp^{}}(\lambda))\bigr).$$
The short exact sequence~(\ref{N3}) induces 
 a  short exact sequence
$$0\to L_{\fg^{}}(\nu)^{\oplus m}\to \Rad(N_3)\to M\to 0,$$
where $M:=
\Rad\bigl(\Gamma_0^{\fg^{},\fp^{}}(L_{\fp^{}}(\lambda))\bigr)$.
This sequence splits since,  the assumption (b) gives $\ext(\nu;\mu)=0$ if $L_{\fg^{}}(\mu)$ is a subquotient of $M$. Hence $M$ is a submodule of $N_3$,
which gives the following  short exact sequence
$$0\to L_{\fg^{}}(\nu)^{\oplus m}\to N_3/M\to  L_{\fg}(\lambda)\to 0.$$ 
By above, $N_3/M$ is generated by its $\lambda$-weight space, so
$N_3/M$ is indecomposable. \Lem{lemExt} (i) gives
$\ext(\lambda;\nu)\geq m$
as required.
\end{proof}

\subsection{}\label{zii}
Retain notation and assumption of~\ref{zi}. 
 In all formulae  where ${(s)}$ appears, $s$ is assumed to be one of the numbers $1,\ldots,k$. For each $s$ we 
 fix a decomposition
$\fg^{z_s}=\fg_{(s)}\times \fh^{\perp}_{(s)}$
in such a way that $\fh^{\perp}_{(s)}\subset \fh$ and
$$\fg_{(0)}\subset \fg_{(1)}\subset \ldots\subset \fg_{(k)}.$$
We set 
$\fh_{(s)}:=\fg_{(s)}\cap\fh$, $\ft_{(s)}:=\fg_{(s)}\cap\ft$, $
\ft^{\perp}_{(s)}:=\fh^{\perp}_{(s)}\cap \ft$ and 
${\fp}_{(s)}:=(\fg_{(s-1)}+\fb)\cap \fg_{(s)}$.
Note that $\fp_{(s)}$ is s a parabolic subalgebra in $\fg_{(s)}$; one has
$$\fg^{z_s}=\fg_{(s)}+\fh,\ \ \ \fh=\fh_{(s)}\times \fh^{\perp}_{(s)},\ \ \fp^{(s)}=\fp_{(s)}\times \fh^{\perp}_{(s)},\ \ \ {\fp}_{(s)}=\fp^{(s)}\cap \fg_{(s)}.$$ 
In   the notation of~\ref{timesboxmod}
we have $\ P^+(\fg^{z_s})=P^+(\fg_{(s)})\oplus (\ft_{(s)}^{\perp})^*$.
Observe that
$$\ft^*=P^+(\fg^{z_0})\supset P^+(\fg^{z_1})\supset P^+(\fg^{z_2})\supset\ldots\supset P^+(\fg^{z_k})=P^+(\fg).$$

\subsubsection{}\label{AiB}
We assume that for each $s$  one has
\begin{itemize}
\item[(A)]
for any $\lambda',\nu'\in P^+(\fg_{(s)})$ with $\ext_{\fg_{(s)}}(\lambda',\nu')\not=0$  the simple $\fh_{(s)}$-modules
$C_{\lambda'},C_{\nu'}$ are either $\Pi$-invariant
or not  $\Pi$-invariant simultaneously;

\item[(B)] there exists
$\Gamma_{\bullet}^{\fg_{(s)},{\fp}_{(s)}}:\ 
\Fin({\fp}_{(s)})\to\Fin(\fg_{(s)})$ satisfying
the conditions~\ref{Gammagp}.
\end{itemize}

Observe that (A) holds if $\fh_{\ol{1}}=0$ (that is $\fh=\ft$);
in addition (A) holds 
if $\fg_{(s)}\cong \fq_m$ (this
follows from the description of the centre of $\cU(\fq_m))$
obtained in~\cite{Sq}).

\subsubsection{}\label{defKjs}
Take $\lambda,\nu\in \ft^*$  and set $\lambda':=\lambda|_{\ft_{(s)}}$, $\nu':=\nu|_{\ft_{(s)}}$. We introduce
\begin{equation}\label{defKji}
K^j_{(s)}(\lambda;\nu):=\left\{
\begin{array}{ll}
0 &  \text{ if } \lambda\not\in P^+(\fg_{(s)})\\
0 &  \text{ if } \lambda|_{\ft_{(s)}^{\perp}}\not=\nu|_{\ft_{(s)}^{\perp}}\\
\ [\Gamma_j^{\fg_{(s)},\fp_{(s)}}(L_{\fp_{(s)}}(\lambda')):
 L_{\fg_{(s)}}(\nu')]\ & \text{ if } \lambda|_{\ft_{(s)}^{\perp}}=\nu|_{\ft_{(s)}^{\perp}}.
\end{array}
\right.\end{equation}

Note that 
\begin{equation}\label{K0i}
K^0_{(s)}(\lambda;\nu)\not=0 \ \ \Longrightarrow\ \ 
\nu\in\lambda+\mathbb{N}\Delta^-(\fg_{(s)}),\ \ \nu|_{\ft_{(s)}}\in P^+(\fg_{(s)}).\end{equation}

We set
$$ \ext_{(s)}(\lambda;\nu):= \ext_{\fg^{z_s}}(\lambda;\nu).$$

Combining the assumption (A) and~\Cor{corbox}  we get for $\nu<\lambda$
\begin{equation}\label{extzi}
 \ext_{(s)}(\lambda;\nu)=\left\{
\begin{array}{lcl}
0 & & \text{ if } \lambda|_{\ft_{(s)}^{\perp}}\not=\nu|_{\ft_{(s)}^{\perp}}\\
\ext_{\fg_{(s)}}(\lambda';\nu') & & \text{ if } \lambda|_{\ft_{(s)}^{\perp}}=\nu|_{\ft_{(s)}^{\perp}}
\end{array}
\right.\end{equation}
Note that $m(\fg^{(s)};\fp^{(s)};\lambda;\nu)\leq K^0_{(s)}(\lambda;\nu)$.

\subsection{Graph $G(\ft^*;K^0)$}\label{graphtK0}
For $\lambda,\nu\in\ft^*$ we introduce
$$
s(\lambda;\nu):=\max\{s|\ \lambda|_{\ft_{(s)}^{\perp}}=\nu|_{\ft_{(s)}^{\perp}}
\},\ \ \ \ \ k_0(\lambda;\nu):=K^0_{(s(\lambda;\nu))}(\lambda;\nu),\ \ \ $$
Note that 
$s(\lambda;\nu)=\min\{s|\ \nu-\lambda\in\mathbb{N}\Delta^-(\fg_{z_s})\}$
if $\nu<\lambda$. \Cor{corExtchain}  gives
\begin{equation}\label{dimExtK0}
  \ext(\lambda;\nu)\leq  \ext_{(s(\lambda;\nu))}(\lambda;\nu)\leq k_0(\lambda;\nu)\ \text{ for each }\ 
  \lambda,\nu\in P^+(\fg) \text{ with } \nu<\lambda.
\end{equation}

\subsubsection{Definitions}\label{graphs12}
We say that $(\lambda;\nu)$ is {\em $K^i$-stable} if 
$K^i_{(s)}(\lambda;\nu)\not=0$ for each $s>s(\lambda;\nu)$.

Let $G(\ft^*;K^0)$ be the graph with the set of vertices 
$\ft^*$ connected by $k_0(\lambda;\nu)$-edges of the form $\nu\to\lambda$.

For each $B\subset \ft^*$ we denote by $G(B,K^0)$ 
the induced subgraph of $G(\ft^*;K^0)$.
We say that a graph $G(B,K^0)$ is {\em bipartite} if there exists
$\pari: B\to \mathbb{Z}_2$ such that $\nu\to\lambda$ implies
$\pari(\nu)\not=\pari(\lambda)$. For each $\lambda\in \ft^*$ let $B(\lambda)$ 
be the set consisting of $\lambda$ and all its direct predessors in
$G(\ft^*;K^0)$, i.e.
$$B(\lambda):=\{\lambda\}\cup\{\nu|\ k_0(\lambda;\nu)\not=0\}.$$

\subsubsection{Remarks}
Observe that $G(\ft^*;K^0)$ is a directed graph without cycles
(for any edge $\mu\to\nu$ one has $\mu<\nu$).
For $\fg\not=\fgl(n|n)$ one has $B(0)=\{0\}$
since $0$ is a minimal weight in $P^+(\fg_{(i)})$
for each $i$.

Note that if $G(B(\lambda);K^0)$ is bipartite and $\ext(\nu;\nu)=0$ for each
$\nu\in B(\lambda)$, then the radical of  $\Gamma^{\fg,\fp} L_{\fp}(\lambda)$
is semisimple.

\subsubsection{}
\begin{cor}{corgraphs}
Let $\lambda\in P^+(\fg)$ be such that
\begin{itemize}
\item[(a)] $B(\lambda)\subset P^+(\fg)$ and $G(B(\lambda);K^0)$ is bipartite;
\item[(b)] $\ext_{(s)}(\mu;\nu)=0\ \ \Longleftrightarrow\ \ext_{(s)}(\nu;\mu)=0$ for all $s$ and $\mu,\nu\in B(\lambda)\setminus\{\lambda\}$.
\end{itemize}

\begin{enumerate}
\item
If $\nu\in  P^+(\fg)$ with $\nu<\lambda$ satisfies
\begin{itemize}
\item[(c)]
$(\lambda;\nu)$ is $K^1$-stable;
\item[(d)] $(\lambda;\nu)$ is $K^0$-stable or $\ext_{(s)}(\nu;\nu)=0$ for each $s$,
\end{itemize}
then $\ext(\lambda;\nu)=\ext_{(s_0(\lambda;\nu))}(\lambda;\nu)$.
\item If $\lambda,\nu$ satisfy (a)--(d) and
\begin{itemize}
\item[(e)]
$k_0(\lambda;\nu)=1$ or
 $\ext_{(s(\lambda;\nu))}(\nu;\nu)=0$.
\end{itemize}
then $\ext(\lambda;\nu)=k_0(\lambda;\nu)$.
\end{enumerate}
\end{cor}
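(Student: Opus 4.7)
The plan is to derive both claims by iterating \Lem{lem332} and \Lem{lem331} along the chain
$\fg_{(s_0)}\subset\fg_{(s_0+1)}\subset\ldots\subset\fg_{(k)}=\fg$,
where $s_0:=s(\lambda;\nu)$. In both parts the inequalities
$\ext(\lambda;\nu)\leq\ext_{(s_0)}(\lambda;\nu)\leq k_0(\lambda;\nu)$
are already recorded in~(\ref{dimExtK0}), so in each case it suffices to establish the reverse inequality.

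For part~(i), I would prove by induction on $r\in\{s_0,\ldots,k\}$ the inequality
$\ext_{(s_0)}(\lambda;\nu)\leq\ext_{(r)}(\lambda;\nu)$; the case $r=k$ then yields
$\ext_{(s_0)}(\lambda;\nu)\leq\ext(\lambda;\nu)$. The inductive step is an application of
\Lem{lem332} to the parabolic $\fp_{(r+1)}\subset\fg_{(r+1)}$ (taking $z=z_r$ inside
$\fg^{z_{r+1}}$, cf.~\ref{zii}). Its hypothesis (a) is $K^1_{(r+1)}(\lambda;\nu)=0$,
which is exactly condition~(c). Its hypothesis~(b) requires
$\ext_{(r+1)}(\nu;\mu)=0$ for every $\mu$ satisfying $K^0_{(r+1)}(\lambda;\mu)\neq 0$.
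We may assume $\ext_{(s_0)}(\lambda;\nu)>0$, so by~(\ref{dimExtK0}) we have $k_0(\lambda;\nu)>0$, whence $\nu\in B(\lambda)\setminus\{\lambda\}$ and $\pari(\nu)\neq\pari(\lambda)$ by bipartiteness. For $\mu\neq\nu$ such $\mu$ lies in $B(\lambda)\setminus\{\lambda\}$ with $\pari(\mu)\neq\pari(\lambda)$, so $\pari(\mu)=\pari(\nu)$; bipartiteness of $G(B(\lambda);K^0)$ then forces $k_0(\mu;\nu)=k_0(\nu;\mu)=0$, and combining this with the symmetry in~(b) yields the required vanishing $\ext_{(r+1)}(\nu;\mu)=0$. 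For the diagonal case $\mu=\nu$, condition~(d) supplies $\ext_{(r+1)}(\nu;\nu)=0$ directly; in the alternative $K^0$-stable case the same parity argument applies.

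For part~(ii), part~(i) already gives $\ext(\lambda;\nu)=\ext_{(s_0)}(\lambda;\nu)$, so it remains to check $k_0(\lambda;\nu)\leq\ext_{(s_0)}(\lambda;\nu)$. This is supplied by \Lem{lem331} applied to the pair $(\fg_{(s_0)},\fp_{(s_0)})$: its first hypothesis is handled by the same bipartiteness-plus-symmetry argument as above (now at the single level $s_0$), and its second hypothesis is exactly condition~(e). The main obstacle is justifying rigorously the key vanishing $\ext_{(r)}(\nu;\mu)=0$ for same-parity $K^0$-neighbors $\mu,\nu$ of $\lambda$ at an intermediate level $r<k$: bipartiteness combined with~(\ref{dimExtK0}) provides $\ext(\mu;\nu)=\ext(\nu;\mu)=0$ only at the top level, and transferring this vanishing down the chain will require the symmetry~(b) together with a secondary argument---most naturally a downward induction on the pair $(\mu,\nu)$ or a recursive application of the corollary itself to these smaller weights (whose hypotheses~(a)--(e) should be inherited from those of $(\lambda,\nu)$ since $B(\mu),B(\nu)\subset B(\lambda)$).
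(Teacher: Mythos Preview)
Your overall strategy---iterate \Lem{lem332} up the chain and then invoke \Lem{lem331} at level $s(\lambda;\nu)$---is precisely the paper's. The substantive difference is in how the ``main obstacle'' is handled: the paper dispatches it directly, with no recursion or secondary induction.

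The key point you are missing is that the inequality~(\ref{dimExtK0}) holds at \emph{every} level, not just the top one. That is, for any $\mu_1<\mu_2$ in $P^+(\fg)$ and any~$s$,
\[
\ext_{(s)}(\mu_2;\mu_1)\ \leq\ k_0(\mu_2;\mu_1),
\]
where $k_0$ is computed in $\fg$. Indeed: if $s<s(\mu_2;\mu_1)$ then $\ext_{(s)}(\mu_2;\mu_1)=0$ by~(\ref{extzi}); if $s\geq s(\mu_2;\mu_1)$, one simply applies \Cor{corExtchain} (the engine behind~(\ref{dimExtK0})) with $\fg$ replaced by $\fg^{z_s}$, observing that $s(\mu_2;\mu_1)$ is the same whether computed inside $\fg$ or inside~$\fg^{z_s}$. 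Given this, bipartiteness of $G(B(\lambda);K^0)$ forces $k_0(\mu_2;\mu_1)=0$ for any two distinct $\mu_1,\mu_2\in B(\lambda)\setminus\{\lambda\}$ (both have parity opposite to~$\lambda$), whence $\ext_{(s)}(\mu_2;\mu_1)=0$; condition~(b) then gives $\ext_{(s)}(\mu_1;\mu_2)=0$ too. This is exactly the paper's~(\ref{exter}), valid for \emph{all}~$s$ at once. Your proposed ``downward induction'' or ``recursive application of the corollary to $(\mu,\nu)$'' is therefore unnecessary---and would not go through anyway, since hypotheses~(c)--(e) are assumed only for the pair $(\lambda;\nu)$, not for the auxiliary pairs.

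One further correction: in the diagonal case $\mu=\nu$ under the $K^0$-stable alternative of~(d), it is not the parity argument that applies. Rather, $K^0$-stability of $(\lambda;\nu)$ is meant to say that $K^0_{(s)}(\lambda;\nu)=0$ for each $s>s(\lambda;\nu)$, so that hypothesis~(b) of \Lem{lem332} at $\mu=\nu$ is vacuous at those levels. (The ``$\not=0$'' in the paper's definition of $K^i$-stable in~\S\ref{graphs12} is evidently a typo; both the use of~(c) for hypothesis~(a) of \Lem{lem332} and the verification via \Cor{cora2} confirm that ``$=0$'' is intended.)
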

\begin{proof}  
If $\nu\not\in B(\lambda)$, then~(\ref{dimExtK0}) gives
$\ext(\lambda;\nu)=k_0(\lambda;\nu)=0$. Assume that $\nu\in B(\lambda)$.
Set
$$p:=s_0(\lambda;\nu).$$
Combining~(\ref{extzi}), (\ref{dimExtK0}) we obtain $\ext_{\fg^{z_s}}(\mu_2;\mu_1)\leq k_0(\mu_2;\mu_1)$ for  $\mu_1,\mu_2\in P^+(\fg)$ if $\mu_2>\mu_1$.
Then the assumptions (a), (b) give 
\begin{equation}\label{exter}
\ext_{(s)}(\mu_1;\mu_2)=0 \ \text{ for all }\ \mu_1\not=\mu_2\in B(\lambda)\setminus\{\lambda\}  \text{ and each }s.\end{equation}
  Take $s>p$
and view $\lambda,\nu$  as elements of $P^+(\fg^{z_s})$.
We will use~\Lem{lem332} for the pair $\fp^{(s)}\subset \fg^{z_s}$.
Let us check the assumptions of this lemma:
the assumption (a)  follows from (c) and the assumption (b)
follows from~(\ref{exter}) for $\mu\not=\nu$ (since $\nu\in B(\lambda)$);
the assumption (b) for $\mu=\nu$ means that $K^0_{(s)}(\lambda;\nu)=0$
implies $\ext_{(s)}(\nu;\nu)=0$--- this follows from (d). 
\Lem{lem332} gives $\ext_{(s)}(\lambda;\nu)\leq \ext_{(s+1)}(\lambda;\nu)$.
Using~(\ref{dimExtK0}) we get 
$$\ext(\lambda;\nu)\leq \ext_{(p)}(\lambda;\nu)\leq \ext_{(n)}(\lambda;\nu)=
\ext(\lambda;\nu).$$
This proves (i). For (ii) note that~(\ref{exter}) and (e)
imply the assumptions of~\Lem{lem331} for $\fg^{z_p}$  
which gives $K^0_{p}(\lambda;\nu)\leq 
\ext_{(p)}(\lambda;\nu)$. By (i) this can be rewritten as  
 $k_0(\lambda;\nu)\leq \ext(\lambda;\nu)$. Now~(\ref{dimExtK0}) 
gives  $k_0(\lambda;\nu)=\ext(\lambda;\nu)$ as required.
\end{proof}

\subsubsection{Remark}
If $\lambda$ satisfies (a), (b) and the assumption (e) 
holds for each $\nu\in B(\lambda)$,
then $\Gamma^{\fg,\fp} L_{\fp}(\lambda)$ has a semisimple radical.

\subsubsection{Remark}
In the examples considered  below   each pair $(\lambda;\nu)$ with $\lambda\not=\nu$ 
is $K^i$-stable for any $i$
(in fact $K_{(s)}^i(\lambda;\nu)\not=0$ implies $K_{(s')}^{i'}(\lambda;\nu)\not=0$
for each $s'\not=s$ and any $i'$). In most of the cases $G(B(\lambda);K^0)$ is bipartite (this simply means that $k_0(\mu_1;\mu_2)=0$ for $\mu_1,\mu_2\in B(\lambda)\setminus \{\lambda\}$); moreover, 
$K_{(s)}^i(\lambda;\nu)\not=0$ implies $\pari(\nu)\equiv \pari(\lambda)+i$ modulo $2$.

\section{Weight diagrams and arch diagrams}\label{arches}
In this section we introduce the language of ``arch diagrams''
which will be used in Section~\ref{Section3}. We will consider the following examples
\begin{itemize}
\item
 the principal block over
 $\fg=\fgl(n|n),\osp(2n+t|2n)$ for $t=0,1,2$;
 \item  the principal block over
 $\fq_{2n+\ell}$ for $\ell=0,1$;
 \item  the  ``half-integral'' block of maximal atypicality over $\fq_{2n}$.
 \end{itemize}
We set $\ell:=\dim\ft-2n$, 
i.e., 
$\ell=1$ for $\osp(2n+2|2n)$, $\fq_{2n+1}$ and $\ell=0$ in other cases.

 \subsection{Triangular decompositions}\label{tria}
We fix the following bases of simple roots
$$\Sigma:=\left\{\begin{array}{ll}
\vareps_1-\vareps_2,\ldots,\vareps_n-\delta_1,\delta_1-\delta_2,\ldots,\delta_{n-1}-\delta_n\ & \text{ for }\fgl(n|n)\\
\vareps_1-\delta_{1},\delta_1-\vareps_2,\ldots,\vareps_{n}-\delta_{n},
\delta_n\ & \text{ for }\osp(2n+1|2n)\\
\delta_{1}-\vareps_1,\vareps_1-\delta_{2},\ldots,\vareps_{n-1}-\delta_{n},
\delta_n\pm\vareps_n & \text{ for } \osp(2n|2n)\\
\vareps_{1}-\delta_1,\delta_1-\vareps_{2},\ldots,\vareps_{n}-\delta_n,\delta_n\pm\vareps_{n+1} & \text{ for }
\osp(2n+2|2n).\\
\vareps_1-\vareps_2,\ldots,\vareps_{2n+\ell-1}-\vareps_{2n+\ell} \ & \text{ for } \fq_{2n+\ell}
\end{array}\right.$$
and take the following Weyl vector 
$$\rho:=\left\{\begin{array}{ll}
\displaystyle\sum_{i=1}^n (n-i)(\vareps_{i}-\delta_{n+1-i})\ \ \  & 
\text{ for }\fgl(n|n)\\
0  &  \text{ for }
\osp(2n|2n), \osp(2n+2|2n), \fq_{2n+\ell}\\
\displaystyle\frac{1}{2}\sum_{i=1}^n(\delta_i-\vareps_i)& \text{ for }\osp(2n+1|2n).\\
\end{array}\right.$$

\subsection{Weight diagrams}\label{lambda+rho}
We denote by $\cB_0$ the set of the highest weights for simple modules 
lying in the principal block of $\Fin(\fg)$; for $\fq_{2n}$ we denote by  
$\cB_{1/2}$ the set of the highest weights for simple modules 
lying in the half-integral  block of maximal atypicality.
In what follows $\cB$  will denote $\cB_0$ or $\cB_{1/2}$. These sets 
can be described as follows.

\begin{itemize}
\item[$\bullet$] For $\fgl(n|n)$ the set $\cB_0$ consists of $\lambda$s such that 
$\lambda+\rho=\displaystyle\sum_{i=1}^{n}\lambda_i(\vareps_i-\delta_i)$, where
$\lambda_1,\ldots,\lambda_n$
 are  integers with $\lambda_{i+1}<\lambda_i$. 

\item[$\bullet$] For $\osp(2n+t|2n)$ the set
$\cB_0$ consists of $\lambda$s such that 
$$\lambda+\rho=\left\{
\begin{array}{ll}
\displaystyle\sum_{i=1}^{n-1} \lambda_i(\vareps_i+\delta_i)+\lambda_n(\delta_n+\xi \vareps_n) &\text{ for }  t=0\\
\ \ \ \ \ \displaystyle\sum_{i=1}^{n} \lambda_i(\vareps_i+\delta_i)&\text{ for }  t=2\\
\displaystyle\sum_{i=1}^{s-1} (\lambda_i+\frac{1}{2})
(\vareps_i+\delta_i)+\frac{1}{2}(\delta_s+\xi  \vareps_s)+\displaystyle\sum_{i=s+1}^{n} \frac{1}{2}
(\delta_i-\vareps_i)&\text{ for }  t=1\end{array}
\right.$$
where $\xi\in\{\pm 1\}$ and $\lambda_1,\ldots,\lambda_n\in\mathbb{N}$
 with $\lambda_{i+1}<\lambda_i$ or $\lambda_i=\lambda_{i+1}=0$.
For $t=1$ we have $1\leq s\leq n+1$ and we set
$\lambda_s:=\lambda_{s+1}:=\ldots=\lambda_n=0$ if  $s\leq n$ (for $s=n+1$ we have
 $\lambda+\rho=\sum_{i=1}^{n} (\lambda_i+\frac{1}{2})
(\vareps_i+\delta_i)$).

\item[$\bullet$] 
For $\fq_{2n+\ell}$ the set  $\cB_0$ consists of $\lambda$s such that 
$$\lambda+\rho=\displaystyle\sum_{i=1}^{n}\lambda_i(\vareps_i-\vareps_{2n+\ell+1-i}),$$ where
$\lambda_1,\ldots,\lambda_n\in\mathbb{N}$ with $\lambda_{i+1}<\lambda_i$ or $\lambda_{i+1}=\lambda_i=0$.

\item[$\bullet$] 
For $\fq_{2n}$ the set  $\cB_{1/2}$ consists of $\lambda$s such that 
$$\lambda+\rho=\displaystyle\sum_{i=1}^{n}\lambda_i(\vareps_i-\vareps_{2n+1-i}),$$ where
$\lambda_1,\ldots,\lambda_n\in\mathbb{N}+1/2$ 
and $\lambda_{i+1}<\lambda_i$.
\end{itemize}

\subsubsection{}\label{wtdiag}
We assign to $\lambda$  as above a ``weight diagram'':
for $\cB_0$ (resp., $\cB_{1/2}$) the weight diagram is
a number line with one or several symbols drawn at each position with  integral (resp., half-integral) coordinate:

\begin{itemize}
\item
we put the sign $\times$ at each  position with the coordinate $\lambda_i$;
\item
if $\ell=1$  we add $>$ at the zero position;

\item we add the ``empty symbol'' $\circ$ to all empty positions;

\item for   $\osp(2n|2n)$
 with $\lambda_k\not=0$ 
and for $\osp(2n+1|2n)$ with $s\leq k$, we write the 
sign of $\xi$ before the diagram  ($+$ if $\xi=1$ and $-$ if $\xi=-1$).
\end{itemize}
Note that $\lambda\in\cB_0$ (resp., $\lambda\in\cB_{1/2}$) is  uniquely determined by the weight diagram constructed by the above procedure.

For a  diagram $f$ 
we denote by $f(a)$ the symbols at the position $a$ (for $\fgl(n|n)$ one has
$f(a)\in\{\circ,\times\}$).
For  $\osp(2n|2n)$ (resp., $\osp(2n+1|2n)$) a  diagram  has a sign
if and only if $f(0)=\circ$ (resp.,  $f(0)\not=\circ$). We say that two weight diagrams  ``have different signs'' if
one of them has sign $+$ and another sign $-$.

\subsubsection{}
Consider the case $\fg\not=\fgl(n|n)$.
In this case each position with negative coordinate contains $\circ$ and
we will not depcit these positions. 
Each position with a positive coordinate contains either $\times$ or $\circ$.
For $\ell=0$  the zero position is occupied either by $\circ$ or by
several symbols $\times$; we
write this as $\times^i$ for $i\geq 0$. Similarly, for $\ell=1$
  the zero position is occupied by $\overset{\times^i}{>}$ with $i\geq 0$.
%
%

\subsubsection{Examples}
For $\fgl(3|3)$ the weight diagram of $0$ is 
$\ldots\circ\circ\times\times\times\circ\circ\ldots$, where the leftmost $\times$ occupies the zero position. 
The weight diagram of $0$ is 
$$\begin{array}{lll}
&\ \ \  \times^n\circ\circ\ldots  & \text{ for }  \osp(2n|2n),\fq_{2n}\\
&-\times^n\circ\circ\ldots   & \text{ for }  \osp(2n+1|2n)\\
&\ \ \ \overset{\times^n}{>}\circ\circ\ldots   & \text{ for }  
\osp(2n+2|2n),\fq_{2n+1}.
\end{array}$$

The diagram $+\circ\times\times$ 
corresponds to the $\osp(4|4)$-weight $\lambda=\lambda+\rho=(\vareps_2+\delta_2)+2(\vareps_1+\delta_1)$. The diagram $+\times^3$ 
corresponds to $\osp(7|6)$-weight
 $\lambda=\vareps_1$.

The empty diagram correspond to one of the algebras 
 $\fgl(0|0)=\osp(0|0)=\osp(1|0)=\fq_0=0$; the diagram
$>$  corresponds to the weight $0$  for $\osp(2|0)=\mathbb{C}$
or for $\fq_1$ (in both cases the corresponding simple 
highest weight module
is one-dimensional).

\subsubsection{Remark}\label{RemOSP}
By~\cite{ES1}, Proposition 4.11 for $\lambda\in\cB_0$
the simple $OSP(2n|2n)$-module is
either of the form $L(\lambda)$ if $\lambda_n=0$ or $L(\lambda)\oplus L(\lambda^{\sigma})$, where  $\lambda^{\sigma}$ is obtained from
$\lambda$ by changing the sign of $\xi$. Thus the simple $OSP(2n|2n)$-modules are
in one-to-one correspondence with the unsigned
$\osp(2n|2n)$-diagrams.

\subsection{Arch diagrams}
\label{arcs}
A {\em generalized  arch diagram} is the following data: 
\begin{itemize}
\item[$\bullet$]
a weight diagram $f$, where
the symbols $\times$ at the zero position are drawn vertically and
 $>$ (if it is present) is drawn in the bottom,
\item[$\bullet$]
a collection of non-intersecting arches, where each arch  is 
\begin{itemize}
\item
either $\arc(a;b)$  connecting
 the symbol $\times$  with $\circ$
at the position $b>a$;
\item
or $\arc(0;b,b')$  connecting
 the symbol $\times$ at the zero position with two  symbols $\circ$
at the positions $0<b<b'$;
\item for $\fq_{2n+1}$-case  $\arc(0;b)$ connecting $>$ (at the zero position) with $\circ$
at the positions $b>0$; this arch is called {\em wobbly}. {\footnote
{wobbly arches are important for the description of $\DS_x(L)$; we will not use them 
in our text.}}
\end{itemize}

\end{itemize}
An empty position  is called {\em free} 
 if this position is not an end of an arch; we say that
 $\arc(a;b)$ is  a {\em two-legged arch originated at} $a$ and 
$\arc(0;b,b')$ is a {\em three-legged arch originated at} $0$.
A generalized arch diagram is called {\em arch diagram} if  

\begin{itemize}
\item[$\bullet$]
each symbol $\times$ is the left end of exactly one arch;

\item[$\bullet$]
for $\fq_{2n+1}$-case the symbol $>$  is the left end of a wobbly arch;

\item[$\bullet$]
there are no free positions under the arches;

\item[$\bullet$]
for the $\fgl$-case all arches are two-legged;

\item[$\bullet$]
for the $\osp(2n|2n),\osp(2n+1|2k)$-cases the lowest $\times$ at the zero position
supports a two-legged arch and each other symbol $\times$ at the zero position supports a three-legged arch;

\item[$\bullet$]
for the $\fq_{2n+\ell},\osp(2n+2|2k)$-cases  each symbol $\times$  at the zero position supports a three-legged arch.
\end{itemize}

Each weight diagram $f$ admits a unique arch diagram
which we denote by $\Arc(f)$; this diagram can be constructed
in the following way:
we pass from right to left through the weight diagram and connect each symbol $\times$ with the next empty symbol(s) to the right by an arch.

\subsubsection{Partial order}
We consider a partial order on the set of arches by saying that one arch is smaller than
another one if the first one is "below" the second one:
$$\begin{array}{l}
\arc(a;b)>\arc(a';b')\ \ \Longleftrightarrow\ \ a<a'<b\\
\arc(0;b_1,b_2)>\arc(a';b')\ \ \Longleftrightarrow\ \ a'<b_2,\\
\arc(0;b_1,b_2)>\arc(0;b_1',b_2')\ \ \Longleftrightarrow\ \ b_2>b_2' 
\ \Longleftrightarrow\ \ b_1>b_1'.
\end{array}$$

\subsection{Map $\tau$}\label{tau}   
Following~\cite{GS}, we introduce a bijection
 $\tau$ between  the weight diagrams for $\osp(2n+2|2n)$ and $\osp(2n+1|2n)$:
for a $\osp(2n+2|2n)$-diagram $f$ we construct
 $\tau(f)$ by the
following procedure:
\begin{itemize}
\item[-]
we remove $>$ and then
shift all entires at the non-zero positions of $f$ by one position to the left; 
\item[-]
 we  add 
the sign $+$ if $f(1)=\times$ and
the sign
$-$ if $f(1)=\circ$ and $f(0)\not=>$.
\end{itemize}
For instance, $\tau(\overset{\times}{>})=-\times\ $, $\tau(>\times)=+\times\ $,  $\tau(\overset{\times}{>}\circ\times)=-\times\times\ $,
$\tau(>\circ \times)=\circ \times$.

One readily sees that $\tau$ is a one-to-one correspondence between
weight diagrams and that
there is a natural bijection between the arches in $\Arc(f)$ and $\Arc(\tau(f))$:
the image of $\arc(a;b)$ is $\arc(a-1;b-1)$, the image of $\arc(0;b_1,b_2)$
is $(0;b_1-1,b_2-1)$ if $b_1\not=0$ and $(0;b_2-1)$ if $b_1=0$;
 this bijection preserves  the partial order of the arches.

We will also denote by $\tau$ the corresponding bijection
between the weight (i.e., between the sets $\cB_0$ defined for $\osp(2n+2|2n)$
and $\osp(2n+1|2n)$).

\subsection{The algebras $\fg_{(s)}$}\label{gp}
For $\fg=\osp(2n+t|2n)$ we consider the chain
$$\osp(t|0)\subset\osp(2+t|2)\subset\osp(4+t|4)\subset\ldots\subset\osp(2n+t|2n)=\fg$$
where $\osp(2p+t|2p)$ corresponds to the last $2p+[\frac{t}{2}]$ roots in $\Sigma$; 
we denote the subalgebra $\osp(2s+t|2s)$ by $\fg_{(s)}$. Note that
$\fg_{(0)}=0$ for $t=0,1$ and
$\fg_{(0)}=\mathbb{C}$ to $t=2$.

Similarly, for $\fg=\fgl(n|n),\fq_{n+\ell}$ we consider the chains
$$\begin{array}{lc}
0=\fgl(0|0)\subset\fgl(1|1)\subset \ldots \subset \fgl(n|n) &\ \  \ \ \ 
\fq_{\ell}\subset\fq_{2+\ell}\subset\ldots \subset\fq_{2n+\ell}\end{array}$$
where for $i>0$ the subalgebras $\fgl(i|i)$
(resp., $\fq_{2i+\ell}$) corresponds to the middle 
$2i+\ell-1$ roots in $\Sigma$; we denote the subalgebra 
$\fgl(s|s)$ (resp., $\fq_{2s+\ell}$) by $\fg_{(s)}$. 
It is easy to see that for each $s$ there exist $z_s\in\ft$ such that
$\fg^{z_s}=\fg_{(s)}+\fh$.  

\subsubsection{}\label{tail}
We retain notation of~\ref{zii}.
For $\lambda\in \ft^*$ we denote by $\tail(\lambda)$ the maximal $i$ such that
$\lambda|_{\ft^{(i)}}=0$. 
 If $\rho=0$ (i.e., for $\fg=\osp(2n|2n),\osp(2n+2|2n)$ 
and $\fq_{2n+\ell}$) then $\tail(\lambda)$ is equal to 
  the number of $\times$ at the zero position of the weight diagram (and is equal to
the number of zeros among $\{\lambda_i\}_{i=1}^n$).  The map $\tau$ defined in~\ref{tau}
preserves the function $\tail$.

\section{Multiplicities $K^i(\lambda;\nu)$}\label{Section3}
We retain notation of Section~\ref{arches} and
set $\fp:=\fg_{(n-1)}+\fb$. The multiplicities 
$K^i(\lambda;\nu)$ were obtained in~\cite{PS1},\cite{PS2},\cite{MS} and~\cite{GS}. Below we will describe these multiplicites 
 in terms of arch diagrams.  
We introduce a Poincar\'e polynomial $K^{\lambda,\nu}(z)$ by
$$K^{\lambda,\nu}(z):=\sum_{i=0}^{\infty} K^i(\lambda;\nu) z^i=
\sum_{i=0}^{\infty}[\Gamma_i^{\fg,\fp}(L_{\fp}(\lambda)):
L_{\fg}(\nu)]z^i$$
(by~\cite{Penkov}, the sum is finite). One has
$K^{\lambda;\nu}(z)=0$ if $\lambda\in\cB$ and $\nu\not\in\cB$.
The polynomials $K^{\lambda,\nu}(z)$ for 
$\lambda,\nu\in\cB$ are given in Propositions~\ref{Kgl}, \ref{Kosp}, \ref{Kq}. \Prop{Kgl} ($\fgl$-case) is a simple reformulation of  Corollary 3.8 in~\cite{MS}. \Prop{Kosp} ($\osp$-case) is a reformulation of Proposition 7 in~\cite{GS}
(we translate the  formulae from~\cite{GS} to the language of arch diagrams). 
For the $\fq$-case the polynomials were described recursively by V.~Serganova 
and I.~Penkov in~\cite{PS1},  \cite{PS2}; in~\Prop{Kq} we present non-recursive formulae, which are deduced from the Penkov-Serganova recursive formulae. The rest of the section is occupied by  examples and the proof of~\Prop{Kq}.

\subsection{Notation}\label{fgreserved}
Let $g$ be a weight diagram.
We  denote by  $(g)_p^q$ the diagram
which obtained from $g$ by moving  $\times$ from the position $p$ to a free position $q>p$; such diagram is defined only if
$$ g(p)\in \{\times^i,\overset{\ \ \times^{i}}{>}\} \text{ for }i\geq 1\
\ \text{ and }\ \  
g(q)=\circ.$$ 
For instance, for $g=\times^2\circ\times$ one has $(g)_0^3=\times\circ\times\times$ and $(g)_0^2, (g)_1^5$ are not defined.
 If $g(0)=\times^i$
or $\overset{\ \ \times^{i}}{>}$ for $i>1$, we denote by $(g)_{0,0}^{p,q}$ 
the diagram
which obtained from $g$ by moving two symbols $\times$ from the zero position 
 to  free positions $p$ and $q$ with $p<q$; for example, $(\times^2\times)_{0,0}^{3,4}=\circ\times\circ\times\times$.

If $f(p)\not=\circ$, we denote by $\arc_f(p)$ the  positions 
``connected with $p$ in $\Arc(f)$''; for example, 
$\arc_{\times\circ \times}(2)=3$,
$\arc_{\times\circ \times}(0)=\{1,4\}$ for $\fq_{4}$ 
and  $\arc_{\times\circ \times}(0)=\{1\}$ for $\osp(4|4)$.
Notice that if $(f)_p^q$ is defined, then
$\arc_f(p)$ is defined.

We always assume that $\lambda,\nu\in\cB$ and denote by
 $g$ (resp., $f$) 
the weight diagram of $\lambda$ (resp., of $\nu$); we sometimes write
$K(\frac{g}{f})$ instead of $K^{\lambda,\nu}$.
As in~\ref{lambda+rho} let  $\lambda_1$ be the coordinate of the
rightmost symbol $\times$ in $g$.

\subsection{Proposition (see~\cite{MS}, Corollary 3.8)}\label{Kgl}
Take $\fg=\fgl(n|n)$. 
If $K^{\lambda,\nu}(z)\not=0$, then $g=(f)_a^{\lambda_1}$ and
$$K^{\lambda,\nu}(z)=\left\{\begin{array}{ll}
z^{b-\lambda_1} &\ \text{ if }\lambda_1\leq b\\
0  &\ \text{ if }b< \lambda_1\end{array}\right.$$
where $b:=\arc_f(a)$.

\subsection{Proposition (see~\cite{GS}, Proposition 7)}\label{Kosp}
Take $\fg=\osp(2n+t|2n)$ for $t=0,2$ and $\lambda\not=0$.

\begin{enumerate}
\item 
If $K^{\lambda,\nu}(z)\not=0$, then $g=(f)_a^{\lambda_1}$ or $g=(f)_{0,0}^{p,\lambda_1}$ and $f,g$ do not have different signs.
\item
Let $g=(f)_a^{\lambda_1}$ and $f,g$ do not have different signs. 

Set $b:=\max\arc_f(a)$  and
$b_-:=\min\arc_f(0)$ if $a=0$.

If  $a\not=0$ or $a=0$ and $t=2$, then
$\ \ K^{\lambda,\nu}(z)=\left\{\begin{array}{ll}
z^{b-\lambda_1} &\ \text{ if } \lambda_1\leq b\\
0  &\ \text{ if }b< \lambda_1.\end{array}\right.$

If $a=0$ and $t=0$, then
$$K^{\lambda,\nu}(z)=\left\{\begin{array}{ll}
z^{b_--\lambda_1}+z^{b-\lambda_1} &\ \text{ if }\lambda_1\leq b_-<b\\
z^{b-\lambda_1} &\ \text{ if }\lambda_1\leq b_-=b\\
z^{b-\lambda_1} &\ \text{ if }b_-< \lambda_1\leq b\\
0  &\ \text{ if }b< \lambda_1.\end{array}\right.$$

\item
Let $g=(f)_{0,0}^{p,\lambda_1}$. If $\Arc(f)$ contains $\arc(0;p,q)$ with $\lambda_1\leq q<\max\arc_f(0)$,
then  $K^{\lambda,\nu}(z)=z^{q-\lambda_1}$; otherwise
 $K^{\lambda,\nu}(z)=0$.
\item For $\lambda\not=0$ the polynomials $K^{\lambda,\nu}(z)$ for $\osp(2n+1|2n)$ can
by obtained from the polynomials for $\osp(2n+2|2n)$ by the formula
$K^{\tau(\lambda),\tau(\nu)}(z)=K^{\lambda,\nu}(z)$.
\end{enumerate}

\subsubsection{Examples} \begin{itemize}
\item[(1)]
For $\lambda=\vareps_1+\delta_1$ and $\nu=0$ one has $g=(f)_0^1$
with $b=2n$ for  $\osp(2n+2|2n)$
and $b=2n-1$, $b_-=1$ for $\osp(2n|2n)$.  
The polynomial $K^{\vareps_1+\delta_1,0}(z)$ equals to $1$ for $\osp(2|2)$,  to $1+z^{2n-2}$ for $\osp(2n|2n)$ with $n>1$ and  to $z^{2n-1}$ for $\osp(2n+2|2n)$.

\item[(2)]
Take $\fg=\osp(4|4)$ with $\nu=\vareps_1+\delta_1$. Then
$f=\times\times$ so
$$\Arc(f)=\{\arc(1;2),\arc(0;3)\},\ \ \ \arc_f(1)=\{2\},\ \ 
\arc_f(0)=\{3\}.$$
 The non-zero values of $K(\frac{g}{f})$ are given by the following table
$$\begin{array}{|l||l|l|l|l|}
\hline
g  &\times\circ\times=(\times\times)_1^2 & 
\circ\times\times=(\times\times)_0^2
&  \circ\times\circ\times=(\times\times)_0^3 \\
\hline
K(\frac{g}{f}) & 1 & z & 1 \\
\hline
\end{array}$$

\item[(3)]
Take $\fg=\osp(6|4)$ with $\nu=\vareps_1+\delta_1$.
Then $f=\overset{\times}{>}\times$ so
$$\Arc(f)=\{\arc(1;2),\arc(0;3,4)\},\ \ \ 
\arc_f(1)=\{2\},\ \ \arc_f(0)=\{3,4\}.$$  
The non-zero values of $K(\frac{g}{f})$ are given by the following table
$$\begin{array}{|l||l|l|l|l|l|}
\hline
g  &\overset{\times}{>}\circ\times=(f)_1^2 & 
>\times\times=(f)_0^2
&  >\times\circ\times & >\times\circ\circ\times=(f)_0^4 \\
\hline
K(\frac{g}{f}) & 1 & z^2 & z & 1\\
\hline
\end{array}$$

\item[(4)]
Take $\fg=\osp(6|6)$ with  $\nu=\vareps_1+\delta_1$. Then
$f=\times^2\times$ so
$$\Arc(f)=\{\arc(1;2),\arc(0;3),\arc(0;4,5)\},\ \  \arc_f(1)=\{2\},\  \arc_f(0)=\{3,4,5\}.$$ 
The non-zero values of $K(\frac{g}{f})$ are given by the following table
$$\begin{array}{|l||l|l|l|l|l|}
\hline
g  &\times^2\circ\times & \times\times\times
&  \times\times\circ\times &\times\times\circ\circ\times & \times\times\circ\circ\circ\times\\
\hline
K(\frac{g}{f}) & 1 & z+z^3 & 1+z^2 &  z & 1\\
\hline
\end{array}$$

\item[(5)]
Take $\fg=\osp(10|10)$ with  $f=\times^3\circ\circ\times\times$. Then
$$\Arc(f)=\{(\arc(4;5),\arc(3;6),\arc(0;1),\arc(0;2,7),\arc(0;9,10)\}.$$
For $g=  \times^3\circ\circ\times\circ\times,\ 
\times^3\circ\circ\circ\times\circ\times,\ $
one has $K(\frac{g}{f})=1$. In addition, 
$$\begin{array}{|l||l|l|l|}
\hline
g  
& \times\circ\times\times\times\times=(f)_{0,0}^{2,5}
& (f)_{0,0}^{2,6} &
\times\circ\times\times\times\circ\circ\times=(f)_{0,0}^{2,7}
 \\
\hline
K(\frac{g}{f})  & z^2 &z & 1\\
\hline
\end{array}$$
For $g=\times^3\circ\circ\circ\times\times$
one has $K(\frac{g}{f})=z$; for $g=(f)_0^i$ 
with $i=5,6,\ldots,10$  we have $K(\frac{g}{f})=z^{10-i}$.
Since $K(\frac{g}{f})\not=0$ implies $\lambda_1>\nu_1=4$
we get $K(\frac{g}{f})=0$ for other values of $g$.
\end{itemize}

\subsection{}\label{Kq}
\begin{prop}{propKa}
Take $\fg=\fq_{m}$ and $\lambda,\nu\in\cB_0$ or $\lambda,\nu\in\cB_{1/2}$.

\begin{enumerate}
\item One has 
$K^{0,0}(z)=z+z^2+\ldots+z^{m-1}$ and  $K^{\lambda,\nu}(z)=0$
for $\nu\not=0$.

\item
If $\lambda\not=0$ and $K^{\lambda,\nu}(z)\not=0$, 
 then
$g=(f)_a^{\lambda_1}$ for $a<\lambda_1$.

\item Let $g=(f)_a^{\lambda_1}$ for $a<\lambda_1$.
Set $b:=\max\arc_f(a)$.

If $a\not=0$, then 
 $K^{\lambda,\nu}(z)=\left\{\begin{array}{ll}
z^{b-\lambda_1} &\ \text{ if }\lambda_1\leq b\\
0  &\ \text{ if }b< \lambda_1.\end{array}\right.$

If $a=0$, set $A_{f;\lambda_1}:=\{i\in \arc_f(0)|\ \lambda_1\leq i<b\}$. Then
$$K^{\lambda,\nu}(z)=\left\{\begin{array}{ll}
0  &\ \text{ if }\ \ A_{f;\lambda_1}=\emptyset\\
z^{i_--\lambda_1}+z^{i_+-\lambda_1}\  &\ \text{ otherwise}\end{array}\right.$$
 where 
$i_-:=\min A_{f;\lambda_1}$, $i_+:=\max A_{f;\lambda_1}$.
\end{enumerate}
\end{prop}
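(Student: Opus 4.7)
The strategy is to extract the stated closed formulae from the recursive description of $K^{\lambda,\nu}(z)$ for $\fq_m$ given by Penkov and Serganova in~\cite{PS1},\cite{PS2}, carefully translating that recursion into the arch--diagram combinatorics set up in~\ref{arcs}--\ref{fgreserved}. The proof will go by induction on the rank $m$ through the chain $\fq_{\ell}\subset\fq_{2+\ell}\subset\ldots\subset\fq_{2n+\ell}$ and, within a fixed rank, by induction on the position $\lambda_1$ of the rightmost $\times$ in the diagram $g$ of $\lambda$.

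I would first dispose of part (i). When $\lambda=0$ the $\fp$-module $L_{\fp}(0)$ is one-dimensional, and $\Gamma_{\bullet}^{\fg,\fp}(L_{\fp}(0))$ is computed from the $\fh$-character of the odd part of $\fg/\fp$ via the Koszul-type construction of $\Gamma_{\bullet}^{\fg,\fp}$ used in~\cite{GS}. For our chain this odd part is $\ft$-graded in a way that yields the Poincar\'e polynomial $z+z^2+\ldots+z^{m-1}$, accounting for the formula for $K^{0,0}(z)$. Since every composition factor of the induced module must share the central character of $L_{\fg}(0)$ and have highest weight $\leq 0$, only $L_{\fg}(0)$ can occur; this yields $K^{0,\nu}(z)=0$ for $\nu\neq 0$.

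For (ii) and (iii) take $\lambda\neq 0$ and run the Penkov--Serganova recursion. That recursion expresses $K^{\lambda,\nu}(z)$ in terms of the analogous polynomials for $(\fg_{(n-1)},\fp_{(n-1)})$ plus a translation-functor step, and in arch-diagram language this step corresponds to attaching one outermost arch originating at $\lambda_1$ to $\Arc(f)$. Both inductive steps force $L_{\fg}(\nu)$ to differ from $L_{\fg}(\lambda)$ only by the displacement of a single $\times$ along this attached arch, hence $g=(f)_a^{\lambda_1}$ with $a<\lambda_1$, which gives (ii). The homological degree at which $\nu$ appears records the length of the path the moved $\times$ travels along its arch, explaining the exponent $b-\lambda_1$ for $a\neq 0$ in (iii) and the vanishing when $\lambda_1>b$.

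The main obstacle is the case $a=0$: at the origin every $\times$ in $\fq_m$ supports a three-legged arch, so the $\times$ being moved has more than one admissible target position. I would verify, by matching each term produced by the Penkov--Serganova recursion with a choice of target end lying in the window $[\lambda_1,b)$, that precisely the two extremal ends $i_-=\min A_{f;\lambda_1}$ and $i_+=\max A_{f;\lambda_1}$ survive, with homological degrees $i_{\pm}-\lambda_1$, while intermediate ends cancel through interior differentials in the complex computing $\Gamma_{\bullet}^{\fg,\fp}$. This yields the binomial $z^{i_--\lambda_1}+z^{i_+-\lambda_1}$ and, when $A_{f;\lambda_1}$ is empty, the vanishing of $K^{\lambda,\nu}(z)$, completing (iii). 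Organising the bookkeeping so that the combinatorics of nested three-legged arches at the origin exactly matches the branching of the recursion is the delicate point.
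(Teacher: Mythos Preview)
Your overall plan---derive the closed formulae from the Penkov--Serganova recursion by induction---is the same as the paper's, but your description of that recursion is off in a way that makes the proposal a sketch rather than a proof.

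The actual recursion from~\cite{PS1} does not reduce $K^{\lambda,\nu}$ to the polynomial for $(\fg_{(n-1)},\fp_{(n-1)})$ together with a translation-functor step. It has two branches, governed by whether the rightmost $\times$ in $g$ is adjacent to the next one: if $\lambda_1>\lambda_2+1$ one has $K^{\lambda,\nu}=(z^{-1}K^{\lambda-\theta,\nu})_{+}$ (with a $+\,\ol{K^{\lambda-\theta,\nu}}$ correction when $\tail(\nu)>\tail(\lambda)$), staying in the \emph{same} algebra but decreasing $\lambda_1$; if $\lambda_1=\lambda_2+1$ one has $K^{\lambda,\nu}=zK^{\lambda',\nu'}$, dropping to $\fg_{(n-1)}$. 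The primary induction is therefore on $\lambda_1$, with the rank induction invoked only at the moment the two rightmost $\times$'s collide. Your ``attach the outermost arch'' picture and the translation-functor language do not match this mechanism, and without the correct recursion the later steps do not get off the ground.

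For (i), the paper does not compute a Koszul complex; it simply cites~\cite{PS1}, Theorem~4, which already gives $K^{0,0}(z)=z+\ldots+z^{m-1}$ and the vanishing for $\nu\neq 0$. More importantly, for the $a=0$ case in (iii) there is no cancellation of ``intermediate ends through interior differentials''. What actually happens is purely combinatorial: writing $\diag(\lambda)=g*\times$ and $\diag(\nu)=f*\circ$, one applies the recursion once and compares $A_{f*;\lambda_1}$ with $A_{f;\lambda_1-1}$. The key observation is that, because all $\times$'s of $f$ lie to the left of $\lambda_1-1$, the set $\arc_f(0)$ contains every integer between $i'_-$ and $i'_+$, so the passage from $(\lambda_1-1)$ to $\lambda_1$ shifts the window by one and one tracks explicitly how $i_{\pm}$ relate to $i'_{\pm}$ in the three subcases $i'_-\neq\lambda_1-1$, $i'_-=i'_+=\lambda_1-1$, and $i'_-=\lambda_1-1<i'_+$. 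Each subcase is a one-line check; no homological cancellation is involved. Your proposal correctly identifies this case as the delicate one but does not supply the actual bookkeeping, and the suggested mechanism (differentials in the complex) is not what makes it work.
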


\subsubsection{Examples}
In the examples below we compute  $K^{\lambda,\nu}(z)$
using~\Prop{propKa}.

\begin{itemize}
\item[(1)] For $\lambda=\vareps_1-\vareps_m$ and $\nu=0$ one has
$g=(f)^1_0$ with $\arc_f(0)=\{1,\ldots,m\}$ and thus $A_{f;1}=\{1,\ldots,m-1\}$.
This gives $K^{\vareps_1-\vareps_m,0}=1+z^{m-2}$ as in~\cite{PS1}, Theorem 4.

\item[(2)]
Take $\fg=\fq_4$ and $f=\times\times$. Then 
$$\Arc(f)=\{\arc(1;2);\ \arc(0; 3,4)\}$$
and
$\arc_f(1)=\{2\},\ \arc_f(0)=\{3,4\}$. This gives

$$\begin{array}{|l||l|l|l|}
\hline
g  &\times\circ\times=(\times\times)^2_1 & \circ\times\circ\times=(\times\times)^3_0 &
\circ\times\times=(\times\times)^2_0 \\
\hline
K(\frac{g}{f}) & 1 & 2& 2z  \\
\hline
\end{array}$$
and $K(\frac{g}{f})=0$ for other values of $g$.

\item[(3)]
Take $f=\times^2\times\circ\circ \times$. One has 
$\arc_f(0)=\{3,6,7,8\}$ and
$$\begin{array}{|l||l|l|l|l|}
\hline
g  &\times\times\circ\circ\times\times=(f)_0^5 &
(f)_0^6 &
\times\times\circ\circ\times\circ\circ\times=(f)_0^7  & (f)^5_4
 \\
\hline
K(\frac{g}{f}) & z+z^2 & 1+z & 2 & 1\\
\hline
\end{array}$$
 Since $K(\frac{g}{f})=0$ implies $\lambda_1>\nu_1=4$ we get $K(\frac{g}{f})=0$ for other values of $g$.

\item[(4)]
Take $f=\times^2\times\circ\circ\circ \times$. One has $\arc_f(0)=\{3,4,7,8\}$
which gives
$$K\bigl(\frac{(f)_0^6}{f}\bigr)=2z,\ \ K\bigl(\frac{(f)_0^7}{f}\bigr)=2,\ \ K\bigl(\frac{(f)_5^6}{f}\bigr)=1$$
Since $K(\frac{g}{f})=0$ implies $\lambda_1>\nu_1=5$ we get $K(\frac{g}{f})=0$ for other values of $g$.

\end{itemize}

\subsection{Proof of~\Prop{propKa}}\label{proofpropka}
Theorem 4 in~\cite{PS1} gives  (i) and
establishes (ii), (iii) for $m=1$ (in this case  $\cB=\{0\}$).
From now on we assume that $m\geq 2$ and $\lambda\not=0$. We set
$$\theta:=\vareps_1-\vareps_m.$$

\subsubsection{Notation}
Recall that $m=2n+\ell$ and $n>0$. For $\mu\in\cB$ we 
write $\mu=(\mu_1,\ldots,\mu_n)$ and
set $\mu':=\mu|_{\ft_{(n-1)}}$, i.e., $\mu'=(\mu_2,\ldots,\mu_n)$.
We will denote the weight diagram of $\mu$ by $\diag(\mu)$.
For a polynomial $P\in\mathbb{Z}[z]$ we introduce $\ol{P}\in\{0,1\}$
by $\ol{P}:=P(0)$
 modulo $2$; we will also use the following notation:
$\bigl(\sum_{i=-\infty}^{\infty} d_iz^i\bigr)_+:=\sum_{i=0}^{\infty} d_iz^i$.

\subsubsection{Formulae from~\cite{PS1}}
Theorem 4 in~\cite{PS1} can be written in the following form
\begin{equation}\label{eqtheta}
\begin{array}{lcl}
K^{\frac{\theta}{2},\nu}=0\ \text{ for }m=2 & & K^{\theta,\mu}=\delta_{0,\mu}(1+z^{m-2}). \end{array}
\end{equation}

Theorem 3 in~\cite{PS1} gives for $m\geq 2$, $\lambda_1>1$ and
$\nu\not=\lambda-\theta$ 
\begin{equation}\label{recursive}
\begin{array}{lll}
K^{\lambda,\lambda-\theta}=1,\\
K^{\lambda,\nu}=(z^{-1}K^{\lambda-\theta,\nu})_+\  & \text{ for }&\lambda_1>\lambda_2+1,\ 
 \  \tail(\nu)\leq \tail(\lambda)\\
K^{\lambda,\nu}=(z^{-1}K^{\lambda-\theta,\nu})_++\ol{K^{\lambda-\theta,\nu}} \ & \text{ for }& \lambda_1>\lambda_2+1,\ 
 \  \tail(\nu)>\tail(\lambda)\\
K^{\lambda,\nu}=0& \text{ for }&\lambda_1=\lambda_2+1, \ \nu_1\not=\lambda_2\\
K^{\lambda,\nu}=zK^{\lambda',\nu'}& \text{ for }&\lambda_1=\lambda_2+1, \ \nu_1=\lambda_2.
\end{array}\end{equation}

\subsubsection{Case $\lambda_1\leq 1$}\label{theta}
In this case
$\lambda=0,\theta$ or $\lambda=\frac{\theta}{2}$ for $m=2$
(note that $\frac{\theta}{2}\not\in P^+(\fq_m)$ for $m>2$).
For  $m=2$  there is 
no diagram $f$ satisfying $(f)_a^b=\diag(\frac{\theta}{2})$.
If $(\diag(\nu))_{a}^{b}=\diag(\theta)$, then $a=0$ and $\nu=0$,
so $\arc_{\diag(\nu)}(0)=\{1,2,\ldots,m\}$. Comparing this with~(\ref{eqtheta}) 
we obtain (ii), (iii) for the case $\lambda_1\leq 1$.

\subsubsection{Case $n=1$}
In this case $\cB_0=\mathbb{N}\theta$ and $\cB_{1/2}=(\mathbb{N}+\frac{1}{2})\theta$ for $\ell=0$.
The induction on $\lambda_1$  gives
$K^{\lambda,\nu}(z)=\delta_{\lambda-\theta,\nu}$ (for $\lambda\not=0$);
this gives (ii), (iii) for the case $n=1$.

\subsubsection{}
If $\nu=\lambda-\theta$, then 
$\diag(\lambda)=(\diag(\nu))_{\lambda_1-1}^{\lambda_1}$ and $K^{\lambda,\nu}(z)=1$
by~(\ref{recursive}); thus (iii) holds for this case.

\subsubsection{}
Assume that $\nu\not=\lambda-\theta$.
Set  $j:=\lambda_1-\lambda_2-1$ and take
$\mu:=\lambda-j\theta$
(i.e., $\diag(\mu)$ is obtained from $\diag(\lambda)$ by moving the 
rightmost $\times$ to the left ``as much as possible'':
for instance, if $\diag(\lambda)=\times\circ\times\circ\circ\times$,
then  $\diag(\mu)=\times\circ\times\times$). 
By~(\ref{recursive}), $K^{\lambda,\nu}(z)\not=0$
 implies $K^{\mu,\nu}(z)\not=0$ which forces 
$\nu_1=\mu_2$ (since $\mu_1-1=\mu_2$). Hence $\nu_1=\lambda_2$.
We obtain
$$K^{\lambda,\nu}(z)\not=0,\ \ \nu\not=\lambda-\theta \ \ \Longrightarrow\ \ \ 
\nu_1=\lambda_2.$$

\subsubsection{}
We will prove (ii), (iii) by the induction on $\lambda_1$ (note that $\lambda_1\geq \frac{1}{2}$
since  $\lambda\not=0$). The cases $\lambda_1\leq 1$ and $n=1$ are 
established above. 
From now till the end of the proof   we assume 
$$n\geq 2,\ \ \ \ \lambda_1>1,\ \ \ \nu_1=\lambda_2,\ \ \ \nu\not=\lambda-\theta.$$

Using $\nu_1=\lambda_2$ we write  $\diag(\lambda),\diag(\nu)$ in the form
\begin{equation}\label{ftimesg}
\diag(\lambda)=g*\times,\ \ \ \diag(\nu)=f *\circ
\end{equation}
 where the symbols $*\in\{\circ,\times\}$ occupies the
position $\lambda_1-1$ in both diagrams (note that $f,g$ do not have the same meaning as in~\ref{fgreserved}).
For example, 
$$\begin{array}{rlccl}
\diag(\lambda)=&\circ\times\times\times\circ\times\times  &\ \  & \diag(\nu)=& \times\times\times\times\circ\times\circ \\
\ *=\times,\ \ \ \ g=& \circ\times\times\times\circ\ \ \ \  & & 
f=& \times\times\times\times\circ
\end{array}$$
The formulae~(\ref{recursive}) give
\begin{equation}\label{Kfg}
\begin{array}{ll}
K(\frac{f\circ\times}{f\times\circ})=1\\
K(\frac{g\circ\times}{f\circ\circ})=(z^{-1}K(\frac{g\times}{f\circ}))_+ & \text{ if }\ \  \tail(\nu)\leq \tail(\lambda),\\ K(\frac{g\circ\times}{f\circ\circ})=(z^{-1}K(\frac{g\times}{f\circ}))_+ +\ol{K(\frac{g\times}{f\circ})}& \text{ if }\ \  \tail(\nu)>\tail(\lambda),\\
 K(\frac{g\times\times}{f\times\circ})=zK(\frac{g\times}{f\circ}),\ \ \ \  \ \ \ K(\frac{g\times\times}{f\circ\circ})=0.
\end{array}\end{equation}

\subsubsection{Proof of  (ii)}
Assume that $K^{\lambda,\nu}(z)\not=0$.

If $*=\times$, then 
$K^{\lambda,\nu}=K(\frac{g\times\times}{f\times\circ})=zK(\frac{g\times}{f\circ})$. By induction,
$K(\frac{g\times}{f\circ})\not=0$ implies that $g\times=(f\circ)_a^{\lambda_1-1}$
for some $a$, which gives
$g\times\times=(f\times\circ)_a^{\lambda_1}$.

If  $*=\circ$, then $K^{\lambda,\nu}=K(\frac{g\circ\times}{f\circ\circ})\not=0$.  By~(\ref{Kfg}), this gives $K(\frac{g\times}{f\circ})\not=0$. By induction
this implies $g\times=(f\circ)_a^{\lambda_1-1}$ for some $a$,
which gives 
$g\circ\times=(f\circ\circ)_a^{\lambda_1}$. 

This establishes (ii).

\subsubsection{}\label{iiipf}
The proof of (iii) occupies~\ref{iiipf}---\ref{a0}. 
We assume that 
$\diag(\lambda)=(\diag(\nu))^{\lambda_1}_a$ and $\nu\not=\lambda-\theta$. Then
\begin{equation}\label{starno}\begin{array}{ccccc}
\diag(\lambda)=g * \times & & \diag(\nu)=f * \circ & & g\times=(f\circ)_a^{\lambda_1-1}.\end{array}\end{equation}

\subsubsection{Case $a\not=0$}
In this case $\tail(\nu)=\tail(\lambda)$. Take
 $b':=\arc_f(a)$  and $b:=\arc_{f*}(a)$.

If $*=\circ$, then $f=f*$ and $b=b'$. By induction  we  get
$$K(\frac{g\circ\times}{f\circ\circ})=\bigl(z^{-1}K(\frac{g\times}{f\circ})\bigr)_+=\bigl(z^{-1}(z^{b-(\lambda_1-1)})_+\bigr)_+=(z^{b-\lambda_1})_+
$$
as required. For $*=\times$ one has
 $\arc(\lambda_1-1;\lambda_1)\in\Arc(f\times\circ)$, so $b=b'$
if $b'<\lambda_1-1$ and $b=b'+2$ otherwise.
By induction we get
$$K(\frac{g\times\times}{f\times\circ})=zK(\frac{g\times}{f\circ})=
z (z^{b'-(\lambda_1-1)})_+=(z^{b-\lambda_1})_+.$$
This establishes the required formula for $a\not=0$.

\subsubsection{Case $a=0$}\label{a0}
In this case $\tail(\nu)=\tail(\lambda)+1$. 
Set 
$$i_-:=\min A_{f*;\lambda_1},\ \ i_+:=\min A_{f*;\lambda_1},\ \ 
i'_-:=\min A_{f;\lambda_1-1},\ \ i'_+:=\min A_{f;\lambda_1-1}$$
taking $i_{\pm}=-\infty$ (resp., $i'_{\pm}=-\infty$)
if $A_{f*;\lambda_1}=\emptyset$ (resp., $A_{f;\lambda_1-1}=\emptyset$).
By induction
$$K(\frac{g\times}{f\circ})=z^{i'_--(\lambda_1-1)}+z^{i'_+-(\lambda_1-1)}.$$

If $*=\times$, then $i_{\pm}=i'_{\pm}+2$ and~(\ref{Kfg}) gives
$$K(\frac{g\times\times}{f\times\circ})=zK(\frac{g\times}{f\circ})=
z^{i_--\lambda_1}+z^{i_+-\lambda_1}.$$

Consider the remaining case $*=\circ$. 
By~(\ref{Kfg}) we have
$$K(\frac{g\circ\times}{f\circ\circ})=\bigl(z^{-1}K(\frac{g\times}{f\circ})\bigr)_+
+\ol{K(\frac{g\times}{f\circ})}.$$

Since
the  coordinates of $\times$ in $f$ are smaller than $\lambda_1-1$,
$\arc_f(0)$ contains all integers between $i'_-$ and $i'_+$.
Thus $\lambda_1-1\leq i'_-\leq i'_+$ and
$$A_{f*,\lambda_1}=A_{f,\lambda_1}=\{i|\ i\not= \lambda_1-1,\ i'_-\leq i\leq i'_+ 
\}.
$$

If $i'_-\not=\lambda_1-1$, this gives $i_-=i'_-$ and $i_+=i'_+$ which imply
$$K(\frac{g* \times}{f *\circ})=z^{-1} 
K(\frac{g\times}{f\circ})=z^{i_--\lambda_1}+z^{i_+-\lambda_1}.$$

If $i'_-=i'_+=\lambda_1-1$, then $A_{f*,\lambda_1}=\emptyset$
and thus $i_{\pm}=-\infty$. One has
$K(\frac{g\times}{f\circ})=2$, so 
$K(\frac{g* \times}{f *\circ})=0=z^{i_--\lambda_1}+z^{i_+-\lambda_1}$.

If $i'_-=\lambda_1-1<i'_+$, then $i_-=\lambda_1$, $i_+=i'_+$. In this case
$K(\frac{g\times}{f\circ})=1+z^{i'_+-(\lambda_1-1)}$ and
$K(\frac{g* \times}{f *\circ})=1+z^{i_+-\lambda_1}$.

We see that in all cases $K(\frac{g*\times}{f*\circ})=
z^{i_--\lambda_1}+z^{i_+-\lambda_1}$. 
This completes the proof of (iii).
\qed

\section{The grading $\pari$ and the computation of $\ext(\lambda;\nu)$}\label{sectiondex}
In this section we introduce the $\mathbb{Z}_2$-grading $\pari$
and  describe the graphs $G(\cB;K^0)$. Then we describe the graphs
$(\CC;\ext)$ which were defined in  Introduction.

\subsection{The grading $\pari$ }
Recall that $\ell=1$ for $\osp(2n+2|2n)$, $\fq_{2n+1}$ and $\ell=0$ in other cases. 
For $\lambda,\nu\in\cB$ we take $\lambda_1,\ldots,\lambda_n$ as in~\ref{lambda+rho}
and introduce 
\begin{equation}\label{paridef}\begin{array}{l}
||\lambda||:=\left\{\begin{array}{ll}
\displaystyle\sum_{i=1}^n \lambda_i & \text{ if } \fg\not=\osp(2n+2|2n)\\
\displaystyle\sum_{i=1}^n \lambda_i-\ell(n-\tail\lambda) & \text{ if }\fg=\osp(2n+2|2n)  \end{array}\right.\\
\ \ \pari(\lambda):=||\lambda|| \mod 2\ \ \ \ \ \ 
 \pari(\lambda;\nu):=
\left\{\begin{array}{ll}
0 & \text{ if } \pari(\lambda)=\pari(\nu)\\
1 & \text{ if } \pari(\lambda)\not=\pari(\nu) \end{array}\right.\\
 \tail(\nu;\lambda):=\tail\nu-\tail\lambda.
\end{array}
\end{equation}

Observe that $||\tau(\lambda)||=||\lambda||$
for $\fg=\osp(2n+2|2n)$. 

Recall that we use $\equiv$ for 
the equivalence modulo $2$.

\subsubsection{}
\begin{cor}{cordex}
Let $\lambda,\nu\in \cB$ be such that  $K^{\lambda,\nu}(z)\not=0$.
\begin{enumerate}
\item
For $\fg=\fgl(n|n)$ or $\fg=\fq_{2n}$ with $\cB;=\cB_{1/2}$ one has
$\lambda>\nu$ and
$K^{\lambda,\nu}(z)=z^i$, where $i\equiv \pari(\lambda;\nu)+1$ modulo $2$.

\item
For $\fg=\osp(2n+2|2n),\osp(2n+1|2n)$ with $\lambda\not=0$, one has
   $\lambda>\nu$, $\tail(\nu;\lambda)$ in $\{0,1,2\}$ and
$K^{\lambda,\nu}(z)=z^i$ where $i\equiv \pari(\lambda;\nu)+1$.

\item
Take $\fg=\osp(2n|2n)$ with $\lambda\not=0$.  Then  $\lambda>\nu$ and $\tail(\nu;\lambda)$ in $\{0,1,2\}$.
If $\tail(\nu;\lambda)\not=1$, then $K^{\lambda,\nu}(z)=z^i$; if $\tail(\nu;\lambda)=1$, then
$K^{\lambda,\nu}(z)$ equals to $z^i$ or to $z^i+z^{j}$
with $j<i$  and $j\equiv i$ modulo $2$. In both cases
 $i\equiv \pari(\lambda;\nu)+1$.

\item
Take $\fg=\fq_{2n+\ell}$  with $\lambda\not=0$.
Then  $\lambda>\nu$ and $\tail(\nu;\lambda)\in\{0,1\}$.

If $\tail(\nu;\lambda)=0$, then $K^{\lambda,\nu}(z)=z^i$
for $i\equiv \pari(\lambda;\nu)+1$.

If $\tail(\nu;\lambda)=1$, then
$K^{\lambda,\nu}(z)=z^i+z^{j}$
with $j\leq i$ and $i\equiv \pari(\lambda;\nu)+1+\ell$.
\end{enumerate}
\end{cor}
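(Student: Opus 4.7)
My plan is to derive (i)--(iv) directly from Propositions \ref{Kgl}, \ref{Kosp}, \ref{Kq}, each of which describes $K^{\lambda,\nu}(z)$ explicitly via the arch diagram of $\nu$.  The structural claims are then automatic: each of those propositions asserts that $K^{\lambda,\nu}(z)\neq 0$ forces $g=(f)_a^{\lambda_1}$ or, in certain $\osp$ cases, $g=(f)_{0,0}^{p,\lambda_1}$, and such a move strictly increases coordinates of the highest weight, giving $\nu-\lambda\in\mathbb{N}\Delta^-(\fg)$ and hence $\lambda>\nu$.  The function $\tail$ counts symbols $\times$ at position $0$, so a two-legged move from $0$ lowers $\tail$ by one, a three-legged move by two, and any move originating at a positive position leaves $\tail$ unchanged; this gives $\tail(\nu;\lambda)\in\{0,1,2\}$ in the $\osp$ cases and $\tail(\nu;\lambda)\in\{0,1\}$ in the $\fq$ case.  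The monomial and binomial shapes of $K^{\lambda,\nu}(z)$ claimed in (i)--(iv) are exact restatements of the corresponding Proposition.

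The substantive input is the parity statement $i\equiv\pari(\lambda;\nu)+1+\delta\pmod{2}$, which rests on the following arch-parity lemma for the greedy construction of $\Arc(f)$: for any two-legged arch $\arc(a;b)$ one has $b-a$ odd, because the $b-a-1$ positions strictly between $a$ and $b$ are $\times$/$\circ$-balanced by the non-crossing nesting.  For arches based at position $0$, all positions in $\arc_f(0)$ have parity opposite to $0$ when $>$ is absent (cases $\fgl$, $\osp(2n|2n)$, $\osp(2n+1|2n)$, $\fq_{2n}$), and the same parity as $0$ when $>$ is present (cases $\osp(2n+2|2n)$, $\fq_{2n+1}$); in the former two of the latter cases this is cleanest via the bijection $\tau$ of \S\ref{tau} together with the identity $||\tau(\lambda)||=||\lambda||$ noted after~(\ref{paridef}), which reduces the $\osp(2n+2|2n)$ parity computation to the $\osp(2n+1|2n)$ one.

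Granted this lemma, each case is routine.  For $g=(f)_a^{\lambda_1}$ with $a\neq 0$, we read off $||\lambda||-||\nu||=\lambda_1-a$ and $i=b-\lambda_1$, yielding $i+\pari(\lambda;\nu)\equiv b-a\equiv 1\pmod{2}$.  When $a=0$, the tail correction to $||\cdot||$ (present only for $\osp(2n+2|2n)$, where $\ell=1$) together with the parity of $\max\arc_f(0)$ produces the same congruence; for $g=(f)_{0,0}^{p,\lambda_1}$ in $\osp$ the parity of $p+q$ and the double tail shift combine identically.  The binomial $z^i+z^j$ in (iii) satisfies $j\equiv i$ because $b_-$ and $b$ both lie in $\arc_f(0)$, which the lemma puts into one parity class; the binomial in (iv) requires only the parity of $i=i_+-\lambda_1$, which the lemma pins down with an explicit $+\ell$ correction from the $>$ in $\fq_{2n+1}$.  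The main obstacle is the verification of the arch-parity lemma for three-legged arches when multiple $\times$'s are anchored at position $0$ and, in the $\fq_{2n+1}$ case, $>$ is also present; I plan to handle this by induction on the number of three-legged arches based at $0$, peeling off the innermost and appealing to the two-legged case, with the $\tau$-reduction absorbing the $\ell$-shift in the $\osp$ setting.
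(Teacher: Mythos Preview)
Your approach coincides with the paper's: the structural claims are read off directly from Propositions~\ref{Kgl}--\ref{Kq}, and the only work is the parity congruence, handled case by case using the parity of arch endpoints. The paper uses $\tau$ in the opposite direction to yours (reducing $t=1$ to $t=2$, since~\Prop{Kosp} is only stated for $t=0,2$), but this is cosmetic.

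There is, however, a genuine error in your arch-parity lemma. It is \emph{not} true that all positions in $\arc_f(0)$ lie in a single parity class. For $\fg=\osp(6|6)$ and $f=\times^2\times$ one has $\arc_f(0)=\{3,4,5\}$; for $\fg=\fq_6$ and $f=\times^2\circ\times$ one has $\arc_f(0)=\{1,4,5,6\}$. Your proposed induction, peeling off the innermost three-legged arch, cannot work: for any three-legged $\arc(0;p,q)$ the positions strictly between $p$ and $q$ are matched in $\times$/$\circ$ pairs by nested two-legged arches, so $q-p$ is \emph{odd} and the two legs have opposite parities. Consequently your justification for $j\equiv i$ in (iii) and for the parity of $i_+$ in (iv) does not go through as written.

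What the paper actually uses, and what you should prove instead, are the parities of the \emph{specific} endpoints that enter the formulae: (a) two-legged $\arc(a;b)$ has $b-a$ odd; (b) three-legged $\arc(0;p,q)$ has $q-p$ odd; (c) for $\osp(2n|2n)$ both $b_-=\min\arc_f(0)$ and $b=\max\arc_f(0)$ are odd, the former because it is the endpoint of the two-legged arch from $0$, the latter by counting positions $1,\ldots,b$ under the outermost arch; (d) for $\osp(2n+2|2n)$ the same count gives $b$ even; (e) for $\fq_{2n+\ell}$ with $a=0$, whenever $A_{f;\lambda_1}\neq\emptyset$ one has $i_+=b-1$, because otherwise $b-1$ would be the right end of a two-legged arch $\arc(a';b-1)$ with $0<a'\leq\nu_1<\lambda_1$, forcing the second largest element of $\arc_f(0)$ to be at most $\nu_1$ and hence $A_{f;\lambda_1}=\emptyset$; since the same position count gives $b\equiv\ell$, this yields $i_+\equiv\ell+1$. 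With these targeted statements in place of the blanket lemma, your case analysis is correct and matches the paper's.
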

\begin{proof}
By~\Prop{Kosp} (iv) we can assume $\fg\not=\osp(2n+1|2n)$.
Theorems~\ref{Kgl}--\ref{Kq} immediately imply all 
assertions except $i\equiv \pari(\lambda;\nu)+1$ modulo $2$
and $j\equiv i$ modulo $2$ for $\osp(2n|2n)$. 
We retain notation of~\ref{Kgl}--\ref{Kq}. Recall that  $K^{\lambda,\nu}(z)\not=0$
implies
$g=(f)_a^{\lambda_1}$ or $\fg=\osp(2n+t|2n)$ and $g=(f)_{0,0}^{p,\lambda_1}$.

Consider the case $g=(f)_a^{\lambda_1}$.
 In this case 
$$\pari(\lambda;\nu)\equiv \left\{\begin{array}{ll}
\lambda_1-a & \text{ if } a\not=0\ \text{ or } \fg\not=\osp(2n+2|2n),\fq_{2n+1}\\
\lambda_1-a+1 & \text{ if }  a=0\ \text{ and } \fg=\osp(2n+2|2n),\fq_{2n+1}.
\end{array}
\right.$$

 Consider the case when $\fg\not=\fq_{2n+\ell}$ or $a\not=0$. In this case $i=b-\lambda_1$, where $b=\max \arc_f(a)$.
Observe that $b-a$ is odd except for the case when
 $\fg=\osp(2n+2|2n)$ and $a=0$; in the latter case
$b-a$ is even. Hence  $i\equiv \pari(\lambda;\nu)+1$ if $\fg=\fq_{2n+\ell}$ or $a\not=0$. For $\osp(2n|2n)$ with $a=0$ one has $j=b_--\lambda_1$, where
$b_-=\min \arc_f(0)$ is odd; this gives $j\equiv \pari(\lambda;\nu)+1$.

Consider the  case 
$\fg=\fq_{2n+\ell}$ with $a=0$. One has
$K^{\lambda,\nu}(z)=z^{i_--\lambda_1}+z^{i_+-\lambda_1}$,
where $i_-\leq i_+=\max \{s\in\arc_f(0)|\ \lambda_1\leq s<\max \arc_f(0)\}$. Observe that
 $i_+\equiv \ell+1$, so $i_+-\lambda_1\equiv\pari(\lambda;\nu)+1+\ell$
as required.

For the remaining case 
$\fg=\osp(2n+t|2n)$ and $g=(f)_{0,0}^{p,\lambda_1}$ one has
$\pari(\lambda;\nu)\equiv p+\lambda_1$ modulo $2$.
In this case $i=q-\lambda_1$, where
 $\arc(0;p,q)$ is a three-legged arch in $\Arc(f)$. 
Since  $q-p$ is odd, this implies $i\equiv \pari(\lambda;\nu)+1$. This completes the proof.
\end{proof}

\subsubsection{Remark}
The coefficients of the character formulae obtained in~\cite{GS},\cite{SZq},\cite{GH2} 
can be expressed in terms of the values
${K}^{\lambda,\nu}(-1)$.
By above, if ${K}^{\lambda,\nu}(-1)\not=0$, then 
$$(-1)^{\pari(\lambda;\nu)+1}{K}^{\lambda,\nu}(-1)=
\left\{\begin{array}{ll}
1 & \text{ for }  \fgl(n|n), \osp(2n+1|2n),\osp(2n+2|2n)\\
1 & \text{ for } \osp(2n|2n), \fq_{2n+\ell} \ \ \text{ if } \tail(\nu;\lambda)\not=1\\
1 \text{ or } 2 & \text{ for } 
\osp(2n|2n) \ \ \text{ if } \tail(\nu;\lambda)=1\\
(-2)^{\ell}& \text{ for } 
\fq_{2n+\ell} \ \  \text{ if } \tail(\nu;\lambda)=1.\\
\end{array}
\right.$$

 \subsection{Example}\label{expn=1}
For $n=1$ the polynomials $K^{\lambda,\nu}$ can be presented by the following graphs
where the arrows stands for $K^{\lambda,\nu}\not=0$ and the solid arrows
for $K^{\lambda,\nu}(0)$, so the solid arrows constitute the graph $G(\cB;K^0)$.
If $K^{\lambda;\nu}(z)$ is not a constant polynomial, we write 
 $K^{\lambda;\nu}(z)$ near the corresponding arrow. Using Remark~\ref{RemOSP} we obtain

$$\xymatrix{& \fgl(1|1): &\ldots\ar[r] &-\beta\ar[r]&0 \ar[r] & \beta\ar[r] & 2\beta\ar[r] 
& \ldots & \\
& \osp(2|2): &\ldots  &-\beta'\ar[l]&0 \ar[r]\ar[l] & \beta\ar[r] &2\beta\ar[r] 
& \ldots &\\
& OSP(2|2): & &  0 \ar[r] & \beta\ar[r] &2\beta\ar[r] 
& \ldots &\\}$$
$$\xymatrix{&\osp(4|2): &   &0\ar[r]\ar@{.>}[rd]_z
&2\beta\ar[r] &3\beta
\ar[r]&\ldots& \\
& & & &   \beta\ar[u]\ar@{.>}[lu]&&&
\\
 &\fq_2, \cB_{1/2}: &  &  0 \ar[r] & \frac{\theta}{2}\ar[r] &\frac{3\theta}{2}\ar[r] &\frac{5\theta}{2}\ar[r] 
& \ldots & \\ 
&\fq_2, \cB_0: &  &  0\ar@{=>}[r]& {\theta}\ar[r] &2\theta\ar[r] & 3\theta\ar[r] 
& \ldots & \\
& \fq_3, \cB_0: & & 0\ar@{=>}[r]\ar[rd]_{1+z} &2\theta\ar[r] &3\theta
\ar[r]&\ldots& \\
& & &  & \theta\ar[u]&&&
\\
}
$$ 

For $\fg\not=\osp(4|2)$ the grading $\pari$ is given by
$\pari(i\beta)\equiv i$, $\pari(i\beta')\equiv i$, $\pari(i\theta)\equiv i$;
for $\osp(4|2)$ one has  $\pari(i\beta)\equiv i-1+\delta_{i0}$.

\subsection{Polynomials $\hat{K}^{\lambda,\nu}(z;w)$}\label{Gammagsps}
Retain notation of~\ref{zi}, \ref{zii} and \ref{gp}. 
Substituting $\fg$ by $\fg_{(s)}$ 
we obtain the functors $\Gamma_{\bullet}^{\fg_{(s)},\fp_{(s)}}$
which satisfy the assumptions (A), (B) of~\ref{AiB}. 
The formulae for $K^i_{(s)}(\lambda;\nu)$ can be obtained
from the formulae for $K^i(\lambda;\nu)$ 
by changing $\lambda_1$ to
$\lambda_s$ and $m$ to $s$ in $\fq_m$-case.

\begin{equation}\label{hatK}
\hat{K}^{\lambda,\nu}(z;w):=\sum_{w=1}^k \sum_{i=0}^{\infty} 
K^j_{(i)}(\lambda;\nu)z^iw^j.\end{equation}
Using~\ref{Kgl}--\ref{Kq} we obtain 
 $\hat{K}^{\lambda,\lambda}(z,w)=0$ for any $\lambda\in\cB$ with $\tail\lambda=0$ (for $\fgl(n|n)$ this holds  for any $\lambda\in\cB$).

\subsubsection{}
\begin{cor}{cora1}
Take $\lambda\in \cB$.
\begin{enumerate}
\item 
For $\fg=\fgl(n|n)$ and $\fq_{2n+\ell}$ one has 
$$\hat{K}^{\lambda,\nu}(z,w)\not=0\ \ \Longrightarrow\ \ 
\lambda\geq \nu\ \ \& \ \ \nu\in\cB.$$

\item For $\osp(2n+t|2n)$ one has 
$$\hat{K}^{\lambda,\nu}(z,w)\not=0\ \ \&\ \ \lambda\geq\nu\ \ \Longrightarrow\ \ 
 \nu\in\cB.$$

\end{enumerate}
\end{cor}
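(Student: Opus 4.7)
I would unpack $\hat K^{\lambda,\nu}(z,w)\neq 0$ via the definition~(\ref{defKji}) and then invoke the explicit multiplicity formulae of Propositions~\ref{Kgl}, \ref{Kosp}, \ref{Kq} (equivalently, Corollary~\ref{cordex}) applied inside each subalgebra $\fg_{(s)}$ of the chain fixed in~\ref{gp}. Nonvanishing of $\hat K^{\lambda,\nu}(z,w)$ produces indices $s,j$ with $K^j_{(s)}(\lambda;\nu)\neq 0$; then~(\ref{defKji}) forces $\lambda|_{\ft^\perp_{(s)}}=\nu|_{\ft^\perp_{(s)}}$ and $[\Gamma^{\fg_{(s)},\fp_{(s)}}_j(L_{\fp_{(s)}}(\lambda'))\colon L_{\fg_{(s)}}(\nu')]\neq 0$, where $\lambda':=\lambda|_{\ft_{(s)}}$ and $\nu':=\nu|_{\ft_{(s)}}$. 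The case $\lambda=\nu$ gives both conclusions immediately, so I would assume $\lambda\neq\nu$; the multiplicity condition then places $\lambda',\nu'$ in a common (atypical) block of $\fg_{(s)}$, and after translation equivalence with $\cB(\fg_{(s)})$ the Propositions apply verbatim to this pair.

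Next I would apply Corollary~\ref{cordex} to $(\lambda',\nu')$ inside $\fg_{(s)}$: it yields $\lambda'>\nu'$ in the $\fg_{(s)}$-ordering and describes $\diag(\nu')$ as obtained from $\diag(\lambda')$ by a single cross-move along an arch, or — in the $\osp$ three-legged case — by moving two crosses from position~$0$. Using $\lambda|_{\ft^\perp_{(s)}}=\nu|_{\ft^\perp_{(s)}}$, the weight identity $\nu-\lambda=\nu'-\lambda'\in \mathbb{N}\Delta^-(\fg_{(s)})\subset \mathbb{N}\Delta^-(\fg)$ lifts to $\lambda>\nu$ in $\fg$, proving the ordering claim in~(1). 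The same identity lifts the cross-move on $\diag(\lambda')$ to a cross-move on $\diag(\lambda)$ producing $\diag(\nu)$; I would verify the lift is collision-free by observing that in the chain of~\ref{gp} the inner crosses of $\diag(\lambda)$ occupy positions strictly below those of the outer crosses, so the free positions available to the arch inside $\fg_{(s)}$ remain free in the full diagram. The resulting $\diag(\nu)$ satisfies the conditions in~\ref{lambda+rho} and therefore $\nu\in\cB$.

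For part~(2) the hypothesis $\lambda\geq\nu$ is given, and the same lifting produces $\nu\in\cB$. The role of the hypothesis $\lambda\geq\nu$ here is to resolve the sign ambiguity in the $\osp(2n|2n)$ case, where Proposition~\ref{Kosp} only rules out $\diag(\lambda')$ and $\diag(\nu')$ from having \emph{different} signs: the root-lattice condition $\nu-\lambda\in \ZZ\Delta(\fg)$ forced by $\lambda\geq\nu$ pins down the correct global sign for $\diag(\nu)$ to match that of $\diag(\lambda)$. I expect the main obstacle to be the collision-free and signature-consistent lifting of the arch-move, combined with the step of extending the Propositions from $\cB(\fg_{(s)})$ to an arbitrary atypical block of $\fg_{(s)}$ by translation equivalence; both reduce to the structural description of the chain~\ref{gp} and the weight-diagram dictionary of~\ref{lambda+rho}.
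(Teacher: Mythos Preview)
Your overall scheme --- restrict to $\fg_{(s)}$, apply the multiplicity propositions there, and lift --- is exactly the paper's. Two points need correction.

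First, the detour through ``translation equivalence with $\cB(\fg_{(s)})$'' is unnecessary: because of the way the chain in~\ref{gp} is set up, the restriction $\lambda':=\lambda|_{\ft_{(s)}}$ of a weight $\lambda\in\cB$ already lies in the analogous set $\cB'$ for $\fg_{(s)}$, so Propositions~\ref{Kgl}--\ref{Kq} apply directly to $(\lambda',\nu')$ without any block change.

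Second, and more seriously, you misidentify the role of the hypothesis $\lambda\geq\nu$ in part~(ii). The issue is not sign ambiguity. Proposition~\ref{Kosp} and Corollary~\ref{cordex} for $\osp$ are stated only for $\lambda\neq 0$; when $\lambda'=0$ they simply do not apply, and in fact the example following the paper's proof shows $K^{0,\vareps_1}(z)=z$ for $\osp(3|2)$, which yields $\hat K^{0,\vareps_2}(z,w)\neq 0$ in $\osp(5|4)$ with $\vareps_2\not\leq 0$ and $\vareps_2\notin\cB$. The hypothesis $\lambda>\nu$ is what rules out $\lambda'=0$: it forces $\nu'<\lambda'$, and since $0$ is the minimal element of $P^+(\osp(2s+t|2s))$ this gives $\lambda'\neq 0$. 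Only then can you invoke Proposition~\ref{Kosp} to obtain $\nu'\in\cB'$ and proceed with the lift. Your argument as written would try to apply Corollary~\ref{cordex} in the $\osp$ case before securing $\lambda'\neq 0$, and that step fails. Once this is fixed, the verification that $\nu\in\cB$ is the straightforward coordinate check the paper carries out (comparing $\nu_{n+1-s}<\lambda_{n+1-s}\leq\lambda_{n-s}=\nu_{n-s}$), with no separate ``collision'' or sign analysis needed.
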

\begin{proof}
Assume that $\hat{K}^{\lambda,\nu}(z;w)\not=0$ for some $\nu\not=\lambda$;
for $\osp(2n+t|2n)$ we assume, in addition, $\lambda>\nu$.

Since $\hat{K}^{\lambda,\nu}(z;w)\not=0$
one has ${K}^i_{(s)}(\lambda;\nu)\not=0$ for some $i,s$.
Set $\lambda':=\lambda|_{\ft_{(s)}}$, $\nu':=\nu|_{\ft_{(s)}}$ and let
$\cB'\subset P^+(\fg_{(s)})$ be the analogue of the set $\cB$ for
$\fg_{(s)}$. Note that $\lambda'\in  \cB'$. By above,

\begin{enumerate}
\item[(a)]
$K^i(\lambda';\nu')={K}^i_{(s)}(\lambda;\nu)\not=0$
\end{enumerate}
which implies 
\begin{enumerate}
\item[(b)] $\nu'\in P^+(\fg_{(s)})$ and
 $\nu|_{\ft_{(s)}^{\perp}}=\lambda|_{\ft_{(s)}^{\perp}}$.
\end{enumerate}
In particular, $\nu'\not=\lambda'$ (since $\nu\not=\lambda$ and 
$\nu|_{\ft_{(s)}^{\perp}}=\lambda|_{\ft_{(s)}^{\perp}}$).
In the $\osp$-case combining (b) and  $\nu<\lambda$ we obtain
$\nu'<\lambda'$;
since $0$ is the minimal element in $P^+(\osp(2s+t|2s))$ this implies $\lambda'\not=0$.
We conclude that $K^i(\lambda';\nu')$ is given by~\ref{Kgl}-- \ref{Kq} 
(since $\lambda'\not=0$ for the $\osp$-case). Using~\ref{Kgl}-- \ref{Kq} we deduce from (a)
\begin{enumerate}
\item[(c)]
$\nu'\in \cB'\ $ and $\ \nu'<\lambda'$
\end{enumerate}
for all cases. Combining $\nu'<\lambda'$ with (b) we obtain $\lambda>\nu$
for $\fgl(n|n)$ and $\fq_{2n+\ell}$.

Let us show that $\nu\in\cB$.
Combining (b) and (c) we conclude that
$\nu+\rho$ can be written in the form appeared in~\ref{lambda+rho}.
 Moreover~\ref{Kgl}-- \ref{Kq} 
give
\begin{enumerate}
\item[(d)]  $\nu_{n+1-s}<\lambda_{n+1-s}$
\end{enumerate}
Combining (b) and (c)
we conclude that $\nu_i$s are integral (resp., non-negative integral,
in $\mathbb{N}+1/2$) for $\fg=\fgl(n|n)$ (resp., for $\cB_0$ with
$\fg\not=\fgl(n|n)$, for $\cB_{1/2}$). By (b)
\begin{equation}\label{nuii}
\nu_i=\lambda_i\ \ \text{ for }1\leq i\leq n-s.\end{equation}
Since $\lambda\in\cB$ one has
$\lambda_{n+1-s}\leq \lambda_{n-s}=\nu_{n-s}$; using (d) we get
\begin{equation}\label{nus}
\nu_{n+1-s}<\nu_{n-s}.\end{equation}
For $\fgl(n|n)$-case and for $\fq_{2n}$ with $\cB_{1/2}$  combining (c),
 (\ref{nuii}), (\ref{nus}) and the condition $\lambda\in\cB$
 we get
$\nu_i<\nu_{i+1}$ for each $i$.
For other cases we get either $\nu_i\leq \nu_{i+1}$  or
$\nu_i=\nu_{i+1}=0$ for each $i$. This implies $\nu\in\cB$.
\end{proof}

\subsubsection{Example}
The following example shows that $\hat{K}^{\lambda,\nu}(z,w)\not=0$
does not imply $\lambda\geq \nu$ or $\nu\in\cB$  in $\fosp$-case.
By~\cite{Germoni2}, $K^{0,\vareps_1}(z)=z$
for $\osp(3|2)$; this implies
$\hat{K}^{0,\vareps_2}=zw$ for  $\osp(5|4)$ whereas $0<\vareps_2$ and
$\vareps_2\not\in P^+(\osp(5|4))$ (and  so $\vareps_2\not\in\cB$).

\subsubsection{}
\begin{cor}{cora2}
Take $\lambda\not=\nu\in \cB$ and set $s:=n+1-\max \{i|\ \lambda_i\not=\nu_i\}$.
\begin{enumerate}
\item Take $\fg=\fgl(n|n), \fq_{2n+\ell}$. If $\hat{K}^{\lambda,\nu}(z;w)\not=0$, then
$$\hat{K}^{\lambda,\nu}(z;w)=\left\{\begin{array}{lll}
z^iw^s &  \text{ for  $\fgl(n|n)$}\\
z^iw^s\ &  \text{ for  $\fq_{2n+\ell}$} & \text{ if }\ \ \tail\lambda=\tail\nu\\
(z^i+z^{j})w^s &  \text{ for  
$\fq_{2n+\ell}$} & \text{ if }\ \ \tail\lambda\not=\tail\nu\\
\end{array}
\right.$$
with $0\leq j\leq i$ in the last case.

\item Take $\fg=\osp(2n+t|2n)$. If $\hat{K}^{\lambda,\nu}(z;w)\not=0$
and $\tail(\lambda)\leq\tail(\nu)$, then
$$\hat{K}^{\lambda,\nu}(z;w)=\left\{\begin{array}{ll}
z^iw^s &  \text{ for  } \osp(2n+1|2n),\osp(2n+2|2n)\\
z^iw^s\ & \text{ for }\osp(2n|2n) \  \text{ if }\ \tail\lambda=\tail\nu\\
z^iw^s\ \text{ or }(z^i+z^{i-2i'})w^s\ &\text{ for  } \osp(2n|2n) \  
\text{ if }\ \tail\lambda\not=\tail\nu\\
\end{array}
\right.$$
with $0\leq i-2i'<i$ in the last case.
\end{enumerate}
In all cases 
$i\equiv\pari(\lambda)-\pari(\nu)+1$ modulo $2$.
\end{cor}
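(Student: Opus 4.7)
The plan is to identify $s = n+1-m_0$, where $m_0 := \max\{i : \lambda_i \neq \nu_i\}$, as the unique index for which $K^j_{(r)}(\lambda;\nu)$ can be nonzero, and then to read off $\hat{K}^{\lambda,\nu}(z;w)$ from Propositions~\ref{Kgl}--\ref{Kq} applied to $\fg_{(s)}$. Recall that $\lambda_i = \nu_i$ for $i > m_0$ while $\lambda_{m_0} \neq \nu_{m_0}$.

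First I would show $K^j_{(r)}(\lambda;\nu) = 0$ for $r < s$. By the definition~(\ref{defKji}), this vanishes unless $\lambda|_{\ft^{\perp}_{(r)}} = \nu|_{\ft^{\perp}_{(r)}}$, equivalently $\lambda_i = \nu_i$ for all $i \leq n-r$. For $r < s$ one has $n-r \geq m_0$, which contradicts $\lambda_{m_0} \neq \nu_{m_0}$. Next I would rule out $r > s$ by invoking the relevant proposition on $\fg_{(r)}$: if $K^j_{(r)}(\lambda;\nu) \neq 0$, then the weight diagrams $f$ of $\nu|_{\ft_{(r)}}$ and $g$ of $\lambda|_{\ft_{(r)}}$ must be related by a move of the form $g = (f)_a^{\lambda'_1}$ (or, in the $\osp$-case, $g = (f)_{0,0}^{p,\lambda'_1}$), with $\lambda'_1 := \lambda_{n+1-r}$ the rightmost $\times$ in $g$. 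Assuming $\lambda_{m_0} \neq 0$, the strict decrease of the nonzero part of $\lambda_i$ combined with $n+1-r < m_0$ forces $\lambda_{m_0} < \lambda_{n+1-r}$, so the mandatory discrepancy between $g$ and $f$ at position $\lambda_{m_0}$ cannot occupy the rightmost $\times$ of $g$, contradicting the move pattern. In the degenerate case $\lambda_{m_0} = 0$, the entire restriction $\lambda|_{\ft_{(r)}}$ vanishes for $r \leq \tail\lambda$ and Proposition~\ref{Kq}(i) combined with $\nu_{m_0} \neq 0$ forces $K^j_{(r)} = 0$, so $\hat{K}^{\lambda,\nu} = 0$ and the assertion is vacuous.

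At $r = s$, the rightmost $\times$ in $g$ is precisely at position $\lambda_{m_0}$, and the two-position discrepancy fits the pattern $g = (f)_{\nu_{m_0}}^{\lambda_{m_0}}$ (or an appropriate three-legged analogue). The polynomial $K^{\lambda|_{\ft_{(s)}},\nu|_{\ft_{(s)}}}(z)$ computed by Propositions~\ref{Kgl}, \ref{Kosp}, \ref{Kq} then produces directly the claimed $z^i w^s$ or $(z^i + z^j)w^s$. The parity congruence $i \equiv \pari(\lambda) - \pari(\nu) + 1 \pmod{2}$ follows from Corollary~\ref{cordex} applied to $\fg_{(s)}$, together with the identity $\pari(\lambda) - \pari(\nu) \equiv \pari_{(s)}(\lambda|_{\ft_{(s)}}) - \pari_{(s)}(\nu|_{\ft_{(s)}}) \pmod{2}$, which is a consequence of $\lambda_i = \nu_i$ for $i \leq n-s$.

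The main obstacle I anticipate is the treatment of the three-legged moves $(f)_{0,0}^{p,\lambda'_1}$ in the $\osp$-case for $r > s$: these involve two simultaneous position shifts from the zero position, and ruling them out requires a position-by-position comparison with the discrepancy at $\lambda_{m_0}$. A secondary subtlety is the $\osp(2n+2|2n)$ definition of $\pari$, which carries the $\ell\cdot(n - \tail\lambda)$ correction; the parity identity above then requires verifying that $\tail\lambda = \tail_{(s)}(\lambda|_{\ft_{(s)}})$ (and similarly for $\nu$), which holds whenever $\tail\lambda \leq s$, a condition guaranteed in the nondegenerate case $\lambda_{m_0} \neq 0$.
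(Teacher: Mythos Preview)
Your overall plan matches the paper's: isolate the single level $r$ that contributes to $\hat K^{\lambda,\nu}$ and then read off the $z$-polynomial from Propositions~\ref{Kgl}--\ref{Kq} and the parity from Corollary~\ref{cordex}. The $r<s$ step is fine. The $r>s$ step, however, has a real gap: the ``mandatory discrepancy between $g$ and $f$ at position $\lambda_{m_0}$'' is not justified. The inequality $\lambda_{m_0}\ne\nu_{m_0}$ compares entries of the coordinate sequences at the \emph{index} $m_0$; it does not force the diagrams to differ at the \emph{diagram position} $\lambda_{m_0}$, because some other $\nu_j$ may equal $\lambda_{m_0}$. Concretely, in $\fgl(3|3)$ take $\lambda=(5,3,1)$, $\nu=(3,2,1)$: then $m_0=2$ and your $s$ equals $2$, yet position $\lambda_{m_0}=3$ carries $\times$ in both diagrams (since $\nu_1=3$). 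Here $K^0_{(3)}(\lambda;\nu)=1$ while $K^j_{(2)}(\lambda;\nu)=0$ (the restriction condition already fails at level $2$), so $\hat K=w^3\ne z^iw^s$.

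This example also exposes a typo in the statement: the intended index is $s=n+1-\min\{i:\lambda_i\ne\nu_i\}$, i.e.\ $s=s(\lambda;\nu)$ from~\S\ref{graphtK0}. With this correction the $r>s$ case becomes immediate and much simpler than the route you tried: for $r>s$ one has $n+1-r<\min\{i:\lambda_i\ne\nu_i\}$, hence $\lambda'_1=\lambda_{n+1-r}=\nu_{n+1-r}=\nu'_1$, so $f(\lambda'_1)=\times$ and the required move $g=(f)_a^{\lambda'_1}$ (or $(f)_{0,0}^{p,\lambda'_1}$ in the $\osp$-case) is simply undefined. This is exactly what the paper's terse ``the formulae in~\ref{Kgl}, \ref{Kq} give (i)'' is pointing to; for part (ii) the extra hypothesis $\tail(\lambda)\le\tail(\nu)$ is invoked only to guarantee $\lambda'\ne 0$, so that Proposition~\ref{Kosp} (which excludes $\lambda'=0$) applies at level $s$.
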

\begin{proof}
The formulae in~\ref{Kgl}, \ref{Kq} give (i). For (ii) take
 $\lambda',\nu'$ as in the proof of~\Cor{cora1}.
The conditions $\lambda\not=\nu$ and $\tail (\lambda)\leq\tail(\nu)$
imply $\lambda'\not=\nu'$ and $\tail (\lambda')\leq\tail(\nu')$ 
which force $\lambda'\not=0$. Therefore $K^i_{(s)}(\lambda;\nu)=K^i(\lambda';\nu')$ is given by~\ref{Kosp}; this gives  (ii).
\end{proof}

\subsection{Graph $G(\cB;K^0)$}
Retain notation of~\ref{graphs12}. 
By~\ref{cora1}, if $\nu\to\lambda$ is an edge in  $G(\ft^*,K^0)$ with $\lambda\in\cB$, then $\nu\in\cB$. In other words,
$B(\lambda)\subset\cB$ for each $\lambda\in\cB$. 
Using~\ref{Kgl}-- \ref{Kq} we obtain the following description for $G(\cB;K^0)$.

\subsubsection{Case $\fgl(n|n)$}
In this case $\nu\mapsto \lambda$ is an edge in $G(\cB;K^0)$
if and only if the diagram of $\lambda$ is obtained
from the diagram of $\nu$ by moving one symbol $\times$ along the arch originated at this symbol. 
Each vertex has exactly $n$ direct successors.

\subsubsection{Case $\fq_{2n}$ with $\cB=\cB_{1/2}$}
In this case $\nu\mapsto \lambda$ is an edge in $G(\cB;K^0)$
if and only if the diagram of $\lambda$ is obtained
from the diagram of $\nu$ by moving one symbol $\times$ along the arch originated at this symbol. Each
vertex has exactly $n$ direct successors.

\subsubsection{Case $\osp(2n+t|2n)$}
The map $\tau$ gives an isomorphism between the graphs $G(\cB;K^0)$
for $t=1$ and $t=2$. For $t=0,2$
 an edge $\nu\mapsto \lambda$ appears in $G(\cB;K^0)$ if and only if
 the diagram of $\lambda$ is  obtained
from the diagram of $\nu$ by one of the following operations:
\begin{itemize}
\item moving
one symbol $\times$ from the zero position to the farthest position
connected to the zero position;

\item moving one symbol $\times$ along the two-legged arch originated at this symbol
\end{itemize}
and, for $t=0$, the diagrams of $\lambda$ and $\nu$ do not have different
signs (for $t=2$ the diagrams do not have signs).
As a result, for $t=1,2$ each vertex has exactly $n$ direct successors;
for $t=0$ this holds for the vertices $\nu$ with $\tail\nu=0$
(observe that for $n=1$
the vertex $0$ has two  direct successors $\delta_1\pm\vareps_1$).

\subsubsection{Case $\fq_{2n+\ell}$ with $\cB=\cB_0$}\label{Kqq}
Let  $\arc(0;b',b)$ 
be the maximal three-legged arch in $\Arc(\nu)$. By~\ref{Kq}, $\nu\mapsto \lambda$ is an edge in $G(\cB;K^0)$ if and only if
the diagram of $\lambda$ is  obtained
from the diagram of $\nu$  by
moving one symbol $\times$ from a position $a$ to a free position $a'$
connected with $a$ subject to the condition $a'\not=b$; the edge
 $\nu\mapsto \lambda$ is simple if $a'\not=b'$ and is double if 
 $a'=b'$. Note that the number of
three-legged arches in
$\Arc(\nu)$ is equal to $\tail\nu$. We conclude that
 each vertex $\nu$ is the origin of $n+\tail\nu$ edges
with no double edges if $\tail\nu=0$ and a unique double edge 
if $\tail\nu>0$.

\subsubsection{}
\begin{cor}{corExt1}
\begin{enumerate}
\item
For the cases $\fg=\fgl(n|n), \osp(2n+t|2n)$ with $\cB=\cB_0$
and for $\fg=\fq_{2n}$ with $\cB=\cB_{1/2}$ the map $\pari$
gives a bipartition of  $G(\cB;K^0)$.

\item The graph 
does not have multiedges except for the case $(\fq_{2n+\ell},\cB_0)$
where the double edges appear.
\end{enumerate}
\end{cor}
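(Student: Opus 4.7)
The plan is to reduce both claims to the polynomial descriptions obtained in~\Cor{cordex} and~\Cor{cora2}, applied to the smaller algebra $\fg_{(s)}$ of the same type as $\fg$ at $s=s(\lambda;\nu)$. Set $\lambda':=\lambda|_{\ft_{(s)}}$ and $\nu':=\nu|_{\ft_{(s)}}$; by construction, $k_0(\lambda;\nu)=K^0_{(s)}(\lambda;\nu)=K^0(\lambda';\nu')$ where the latter is computed in $\fg_{(s)}$. From the proof of~\Cor{cora1}, any edge $\nu\to\lambda$ in $G(\cB;K^0)$ forces $\nu'\in \cB$, and we may assume $\lambda'\neq 0$ (otherwise $K^0(0;\nu')=0$ for $\fosp$ and $\fq$), so Propositions~\ref{Kgl}--\ref{Kq} fully apply.

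For~(i), the first step is to observe that $\pari$ is compatible with restriction: whenever $\lambda|_{\ft_{(s)}^\perp}=\nu|_{\ft_{(s)}^\perp}$, the dropped coordinates $\lambda_1,\ldots,\lambda_{n-s}$ coincide with $\nu_1,\ldots,\nu_{n-s}$, so $\sum(\lambda_i-\nu_i)=\sum(\lambda'_i-\nu'_i)$. For the $\osp(2n+2|2n)$ case the extra correction $-(n-\tail\lambda)$ must be handled: since zeros in a $\cB_0$-diagram occupy only the tail positions, the number of zeros among the dropped coordinates is the same for $\lambda$ and for $\nu$, hence $\tail\lambda-\tail\lambda'=\tail\nu-\tail\nu'$, giving $\tail\lambda-\tail\nu\equiv\tail\lambda'-\tail\nu'\pmod 2$. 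In all cases we conclude $\pari(\lambda)-\pari(\nu)\equiv \pari(\lambda')-\pari(\nu')\pmod 2$. Applying~\Cor{cordex}~(i)--(iii) to $\fg_{(s)}$, every exponent appearing in $K^{\lambda',\nu'}(z)$ is $\equiv \pari(\lambda';\nu')+1\pmod 2$; the condition $K^0(\lambda';\nu')\neq 0$ thus forces $\pari(\lambda';\nu')=1$, which via the compatibility yields $\pari(\lambda)\neq\pari(\nu)$. This gives the bipartition.

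For~(ii), the multiplicity of the edge $\nu\to\lambda$ is the coefficient of $z^0 w^{s(\lambda;\nu)}$ in $\hat K^{\lambda,\nu}(z;w)$, which by~\Cor{cora2} is of the form $z^i w^s$, or $(z^i+z^j)w^s$ with $0\le j\le i$ (only for $\fq_{2n+\ell}$ with $\tail\lambda\neq\tail\nu$), or $(z^i+z^{i-2i'})w^s$ with $0\le i-2i'<i$ (only for $\osp(2n|2n)$ with $\tail\lambda\neq\tail\nu$). In the single-term case the $z^0$-coefficient is at most one. In the $\osp(2n|2n)$ double form the constraint $0\le i-2i'<i$ forces $i\geq 1$, so the summand $z^i$ never contributes to $z^0$ and the $z^{i-2i'}$ term contributes at most once; hence $k_0\le 1$. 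Only in the $\fq_{2n+\ell}$ double form can both exponents vanish simultaneously (exactly when $i=j=0$), producing a double edge. This establishes~(ii).

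The main technical point I anticipate is the bookkeeping for $\osp(2n+2|2n)$ in part~(i): one has to use crucially that zeros occupy only the tail of a $\cB_0$-diagram in order to conclude $\tail\lambda-\tail\nu=\tail\lambda'-\tail\nu'$, so that the correction $-(n-\tail\lambda)$ built into $\pari$ telescopes correctly under the restriction $\lambda\mapsto\lambda'$. Once this compatibility is in place, everything reduces to a direct invocation of the structural results already established.
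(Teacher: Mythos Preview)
Your argument is correct and follows essentially the same route as the paper: the corollary is recorded right after the explicit case-by-case descriptions of $G(\cB;K^0)$, and both those descriptions and your invocation of \Cor{cordex} and \Cor{cora2} are just two ways of reading off the constant term and its parity from the $K$-polynomials of Propositions~\ref{Kgl}--\ref{Kq}. One small point: when you appeal to \Cor{cora2}(ii) for $\osp(2n+t|2n)$ in part~(ii) you should note that the hypothesis $\tail\lambda\le\tail\nu$ is satisfied (it follows from \Cor{cordex} applied to $\fg_{(s)}$ together with your tail compatibility $\tail\lambda-\tail\nu=\tail\lambda'-\tail\nu'$); alternatively, simply use \Cor{cordex} directly for $\fg_{(s)}$ in part~(ii) as you already do in part~(i), which bypasses this hypothesis entirely.
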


\subsubsection{}\label{B+}
Fix $p\in\mathbb{Z}$ for $\fg=\fgl(n|n)$, $p\in\mathbb{N}-1/2$ for
$\cB_{1/2}$ and $p\in\mathbb{N}$ for other cases. We set
$$\cB_{>p}=\{\lambda\in\cB|\ \lambda_n>p\},\ \ \ B_+:=\{\mu\in \mathbb{N}^n|\ \mu_1>\mu_2>\ldots>\mu_n>0\}$$
and identify $\cB_{>p}$ with $B_+$ via the map
$\mu\mapsto (\mu_1-p;\mu_2-p;\ldots;\mu_n-p)$. Note that the weight diagram of $\mu\in\cB_+$ contains $\circ$ or $\times$
in each position
and the corresponding arch diagram ``does not depend on
the type of $\fg$''. By~\ref{Kgl}-- \ref{Kq} for
$\nu,\lambda\in\cB_{>p}$ the polynomials
$\hat{K}^{\lambda,\nu}(z;w)$ are the same for all types of
$\fg$ and $\cB$ (for fixed $n$). In particular, the induced subgraphs $(\cB_{>p};K^0)$ 
are isomorphic for all types of
$\fg$ and $\cB$.

Notice that $\cB_{>-1/2}=\cB_{1/2}$ 
so for each $p$ the graph $(\cB_{>p};K^0)$ is isomorphic to
$(\cB_{1/2};K^0)$.

\subsection{Graph $(\cB;\ext)$}
Recall that $\ext(\lambda;\nu)=\ext(\nu;\lambda)$ and 
$\ext(\lambda;\nu)=0$ if $\lambda\in\cB$, $\nu\not\in\cB$. Retain notation of~\ref{graphtK0}.
One has
$$s(\lambda;\nu)=n+1-\max\{i|\ \lambda_i=\nu_i\}.$$
By~\Cor{cora2}, each pair $(\lambda;\nu)$ with $\lambda\not=\nu$ is $K^i$-stable for 
any $i$.

The following 
corollary describes the graph $(\cB;\ext)$ for $(\fg,\cB)\not=(\fq_{2n+\ell}, \cB_0)$ and gives some information for the case $(\fq_{2n+\ell}, \cB_0)$.

\subsubsection{}
\begin{cor}{corExtmain} Take $\lambda\in\cB$ and $\nu\in \cB$  with 
$\nu<\lambda$. 
\begin{enumerate}
\item If $(\fg,\cB)\not=(\fq_{2n+\ell}, \cB_0)$, then
$\ext(\lambda;\nu)=k_0(\lambda;\nu)\leq 1$.
The module $\Gamma^{\fg,\fp} L_{\fp}(\lambda)$ has a semisimple
radical.

\item If $(\fg,\cB)\not=(\fq_{2n+\ell}, \cB_0)$, then
$\pari$ is a bipartition of the graph $(\cB;\ext)$.

\item
If  $(\fg,\cB)=(\fq_{2n+\ell},\cB_0)$, then
$\ext(\lambda;\nu)\leq k_0(\lambda;\nu)$. If $\lambda_n>1+\ell$, then 
\begin{itemize}
\item $\ext(\lambda;\nu)=k_0(\lambda;\nu)\leq 1$;
\item 
$k_0(\lambda;\nu)\not=0$ implies
$\pari(\nu)\not=\pari(\lambda)$;
\item the module $\Gamma^{\fg,\fp} L_{\fp}(\lambda)$ as a semisimple
radical.
\end{itemize}
\end{enumerate}
\end{cor}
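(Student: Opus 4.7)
The plan is to reduce the corollary to Corollary~\ref{corgraphs}(ii), which yields $\ext(\lambda;\nu)=k_0(\lambda;\nu)$, and then to establish the bound $k_0(\lambda;\nu)\le 1$ from the explicit formulas in Propositions~\ref{Kgl}, \ref{Kosp}, \ref{Kq}. To invoke Corollary~\ref{corgraphs}(ii) I would verify its hypotheses (a)--(e). The inclusion $B(\lambda)\subset\cB\subset P^+(\fg)$ needed for (a) is Corollary~\ref{cora1}, and the bipartiteness half of (a) follows from Corollary~\ref{corExt1}(i) in the cases covered by part~(i); for part~(iii) under $\lambda_n>1+\ell$ it will be verified by the arch-diagram analysis below. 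Hypothesis (b) is the $\ext$-symmetry~\eqref{eqnaoborot}, hypotheses (c) and (d) follow from the universal $K^i$-stability of Corollary~\ref{cora2}, and hypothesis (e) reduces to the bound $k_0(\lambda;\nu)\le 1$.

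This bound is verified case by case using Propositions~\ref{Kgl}--\ref{Kq} applied to the Levi $\fg_{(s(\lambda;\nu))}$. In most cases $K^{\lambda,\nu}(z)$ is a monomial, so the constant term is automatically $0$ or $1$. The exceptions are the binomial $z^{b_--\lambda_1}+z^{b-\lambda_1}$ in the $\osp(2n|2n)$ case (with $b_-<b$, so at most one exponent vanishes) and the binomial $z^{i_--\lambda_1}+z^{i_+-\lambda_1}$ in the $\fq_{2n+\ell}$ case, where $K^0=2$ arises precisely when $i_-=i_+=\lambda_1$. To rule this out under $\lambda_n>1+\ell$, I would trace the arch construction for $\nu$ with $\tail(\nu;\lambda)=1$: such $\nu$ has a single $\times$ at~$0$ whose three-legged arch claims the two smallest empty positions right of~$0$ not used by the wobbly arch (when $\ell=1$) or by the arches from the remaining $\times$'s (all originating at positions $\ge\lambda_n$ and hence touching only positions $>1+\ell$). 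The smaller endpoint then satisfies $b'\le 1+\ell<\lambda_n\le\lambda_1$, whence $A_{f;\lambda_1}=\emptyset$ and every edge $\nu\to\lambda$ in $G(\cB;K^0)$ with $\tail(\nu;\lambda)=1$ is excluded. This simultaneously disposes of the double-edge obstruction and of the parity-violating edges from Corollary~\ref{cordex}(iv), establishing the bipartiteness of $G(B(\lambda);K^0)$ in part~(iii).

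Combining with Corollary~\ref{corgraphs}(ii) gives $\ext(\lambda;\nu)=k_0(\lambda;\nu)\le 1$ in parts~(i) and~(iii), while the inequality $\ext\le k_0$ in the general $\fq$-case of~(iii) is simply~\eqref{dimExtK0}. Part~(ii) follows immediately, since the edges of $(\cB;\ext)$ coincide with the edges of $G(\cB;K^0)$ and the latter is $\pari$-bipartite by Corollary~\ref{corExt1}(i). Semisimplicity of $\Rad\Gamma^{\fg,\fp}L_{\fp}(\lambda)$ follows from the remark after Corollary~\ref{corgraphs}, applied because condition~(e) holds with $k_0(\lambda;\nu)=1$ at every $\nu\in B(\lambda)\setminus\{\lambda\}$. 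The most delicate step of the plan is the arch-diagram case analysis in paragraph two: careful bookkeeping of the interaction of the wobbly arch (in the $\ell=1$ case) with the three-legged arch from~$0$ is required, but the conclusion rests on the single structural observation $b'\le 1+\ell<\lambda_n$.
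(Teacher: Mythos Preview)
Your proposal is correct and follows essentially the same route as the paper: verify hypotheses (a)--(e) of Corollary~\ref{corgraphs} via Corollaries~\ref{cora1}, \ref{corExt1}, \ref{cora2} and the symmetry~\eqref{eqnaoborot}, then read off $\ext=k_0\le 1$ and the semisimplicity of the radical.

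The only substantive difference is in part~(iii). The paper argues in one line that $\lambda_n>1+\ell$ forces $B(\lambda)\subset\cB_{>0}$, then invokes \S\ref{B+} (the identification of $G(\cB_{>0};K^0)$ with the half-integral graph $G(\cB_{1/2};K^0)$, already known to be $\pari$-bipartite without multi-edges). You instead unpack this inclusion by hand: if $\tail(\nu)=1$ then the non-zero $\times$'s of $\nu$ lie among $\{\lambda_i\}$, hence at positions $\ge\lambda_n>1+\ell$, so position~$1$ is free and the three-legged arch from~$0$ has $b'\le 1+\ell$; the only admissible move $0\to b'$ then produces a $\lambda$ with $\min\lambda_i\le 1+\ell$, a contradiction. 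Both arguments establish the same intermediate fact $B(\lambda)\subset\cB_{>0}$; yours is more explicit, the paper's more conceptual. Two minor points: the wobbly arch for $\ell=1$ is drawn \emph{after} the three-legged arch (see~\S4.5.4), not before as your phrasing suggests, though either order yields $b'\le 1+\ell$; and your ``$A_{f;\lambda_1}$'' should strictly be read at level $s(\lambda;\nu)$, but since the relevant coordinate satisfies $\lambda_{n-s+1}\ge\lambda_n>1+\ell$ the conclusion is unaffected.
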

\begin{proof}
For $\fg\not=\fq_{2n+\ell}$ one has $\fh=\ft$ and 
$\ext(\lambda;\lambda)=0$; for $\fq_{2n}$ one has
 $\ext(\lambda;\lambda)=0$ for  any $\lambda\in\cB_{1/2}$.
Combining Corollaries~\ref{corgraphs} and~\ref{corExt1} 
we obtain (i), (ii) and the inequality 
$\ext(\lambda;\nu)\leq k_0(\lambda;\nu)$
in (iii). Take $(\fg,\cB)=(\fq_{2n+\ell},\cB_0)$. The assumption 
$\lambda_n>1+\ell$ gives $B(\lambda)\subset \cB_{>0}$ (see~\ref{B+} for notation).
By~\ref{B+} the map $\pari$ is a bipartion of the graph $G(B(\lambda),K^0)$  
and $k_0(\lambda;\nu)=1$ for each $\nu\in B(\lambda)$. Using~\Cor{corgraphs} we 
obtain  all assertions of (iii).
\end{proof}

\subsubsection{Example}\label{exan=11}
For  $n=1$ the graphs $G(\cB;K^0)$ are given in~\ref{expn=1}.
The corresponding $\ext$-graphs, $A^{\infty}_{\infty}$
for $\fgl(1|1)$, $\osp(2|2)$, $D_{\infty}$ for $\osp(4|2)$ and $A_{\infty}$ for the rest of the cases, appear in Introduction. 
In agreement with~\Cor{corExtmain} (i)
for $(\fg,\cB)\not=(\fq_m,\cB_0)$   the $\ext$-graph can be
 obtained form $G(\cB;K^0)$ by erasing the dotted arrows and
changing $\longrightarrow$ to $\longleftrightarrow$; in this case
$\pari$ is the bipartition of the $\ext$-graph. In the remaining cases (for $n=1$)
the  $\ext$-graphs are 
$$\xymatrix{& \fq_2, \cB_0: &  &  & 0\ar[r]& {\theta}\ar[r]\ar[l] &2\theta\ar[r]\ar[l] & 3\theta\ar[r]\ar[l] 
& \ldots & \\
& \fq_3, \cB_0: & & & \theta\ar[r]& 0\ar[r]\ar[l] &2\theta\ar[r]\ar[l] &3\theta
\ar[r]\ar[l]&\ldots& \\
}
$$ 
see~\cite{MM},\cite{GNS}. 
Combining~\ref{expn=1} and~\ref{exan=11}, we conclude that
for $\fq_{2}$ the radical of  $\Gamma^{\fg,\fp} L_{\fp}(\theta)$
is an indecomposable isotypical module of length two
 with the cosocle isomorphic to $L_{\fg}(0)$, and for
$\fq_{3}$ the radical of  $\Gamma^{\fg,\fp} L_{\fp}(2\theta)$ is 
a module of length
three with the subquotients isomorphic to $L_{\fg}(0),L_{\fg}(0),L_{\fg}(\theta)$
and the cosocle isomorphic to $L_{\fg}(0)$.

\subsubsection{Remark}
Take $\fg=\fq_{2n+1}$ and $\lambda\in\cB_0$ such that $\lambda_n=1$ and
$\lambda_{n-1}>4$.
Let us show that conclusions of~\Cor{corExtmain} (iii) holds for such $\lambda$. Take $\mu$ such that
$k_0(\lambda;\mu)\not=0$. Since
$\diag\lambda=>\times\circ\circ\circ g$ for some diagram $g$
one has $\diag\mu=\overset{\times}{>}\circ \circ\circ\circ g\ $
 or $\diag\mu=>\times\circ\circ\circ f$ with $g=(f)_a^b$.
 In both cases $k_0(\lambda;\mu)=1$ and
 $\pari(\mu)\not=\pari(\lambda)$.  
 Hence  $G(B(\lambda);K^0)$ is bipartite,
 $\Gamma^{\fg,\fp} L_{\fp}(\lambda)$ has a semisimple
radical and~\Cor{corgraphs} gives $\ext(\lambda;\nu)=k_0(\lambda;\nu)$.

\subsubsection{Remark}\label{Alex}
Fix $p\in\mathbb{Z}$ for $\fg=\fgl(n|n)$, $p\in\mathbb{N}-1/2$ for
$\cB_{1/2}$ and $p\in\mathbb{N}$ for $\osp(2n+t|2n)$, $p\in\mathbb{N}_{>\ell}$
for $(\fq_{2n+\ell},\cB_0)$. Retain notation of~\ref{B+}. 
By~\Cor{corExtmain} 
$\ext(\lambda;\nu)=k_0(\lambda;\nu)$ for each $\lambda,\nu\in\cB_{>p}$ with $\lambda>\nu$. In the light of~\ref{B+}, for $\lambda\not=\nu$ the value
$\ext(\lambda;\nu)$ does not depend on
$\fg$ and $p$ (under the identification of $\cB_{>p}$ with $B_+$).
Let $\CC_+$ be the Serre subcategory  \footnote{  by Serre subcategory generated by a set of simple modules
we mean the full subcategory consisting of the modules of finite length whose all simple
subquotients lie in a given set.} of $\Fin(\fg)$
generated by $L(\lambda)$ with $\lambda\in\cB_{>p}$. By above, the graphs
$(\CC_+;\ext)$ are naturally isomorphic for all $\fg$ with $p$ as above.
For $\fgl(n|n)$ and $\osp(2n+t|2n)$ this implies the isomorphisms
between $\Ext^1$-graphs of $\CC_+$ (in these cases $\Ext^1$-graphs of $\CC_+$
have two connected components which differ by $\Pi$).

\subsection{Proof of Theorem A}\label{lambdanuP+}

Let $\tilde{\fg}$ be one of the algebras $\fgl(m|n)$, $\osp(M|2n)$ or $\fq_m$. 
We will say that weights $\lambda,\nu\in P^+(\tilde{\fg})$ have the same central character if
$L(\lambda), L(\mu)$ have the same central character.
The computation of $\ext(\lambda;\nu)$
for arbitary $\lambda,\nu\in P^+(\tilde{\fg})$ can be reduced to the case
 $\lambda,\nu\in\cB$  with the  help of
 translation functors which map a simple module in a given block
 to an isotypical semisimple module in another block of the same atypicality.
For $\tilde{\fg}\not=\fq_m$, these semisimple modules are simple and, by~\cite{GS},
  each block of atypicality $k$ in $\Fin(\fgl(m|n))$ (resp.,
in $\Fin(\osp(M|2n))$ is equivalent to 
the principal block in $\fgl(k|k)$ (resp.,  in  $\osp(2k+t|2k)$).
In particular, if $L(\mu),L(\nu)$ have the same central character, then
$\ext(\mu;\nu)=\ext(\ol{\mu};\ol{\nu})$, where
$\ol{\mu},\ol{\nu}$ are the corresponding weights in $\cB$ ($\ol{\nu}$ is
described in~\cite{GS}, Section 6). This gives Theorem A for 
$\fg=\fgl(m|n),\osp(M|2n)$ and describes the graph $(\Fin(\fg);\ext)$ in these cases.

For $\fq_m$-case the situation is more complicated, see~\cite{SICM}.

\subsubsection{Weight diagrams for $\fq_m$}\label{wtdiagqm}
For $\mu=\sum_{i=1}^m a_i\vareps_i$ denote by $\core(\mu)$ the set obtained
from $\{a_i\}_{i=1}^m$ by deleting the maximal number of
pairs satisfying $a_i+a_j=0$; for example, for $m=8$ $\core(2\vareps_1+\varesp_2-\vareps_8)=\{2;0\}$. From the description of the centre of $\cU(\fq_m)$ obtained in~\cite{Sq}, it follows  that $L(\lambda),L(\mu)$ have the same central character if
and only if $\core(\lambda)=\core(\mu)$.
We set $\ell(\lambda):=0$ if $0\not\in\core(\lambda)$  and $\ell(\lambda):=1$ otherwise.

The weight diagram
for $\mu=\sum_{i=1}^m a_i\vareps_i\in P^+(\fq_M)$ is constructed by the following procedure: we put  $>$ (resp., $<$) to the $p$th position if $a_i=p$ (resp., $a_i=-p$)
for some $i$, add $\circ$ to all empty positions  and then glue each pair $>,<$ and each pair $>,>$ (which could occur only at the zero position) to one symbol $\times$. For instance 
$$\begin{array}{lclclc}
\mu=\vareps_1-\vareps_3-3\vareps_4 &  & \nu=3\vareps_1-\vareps_4 & &
\lambda=4\vareps_1 +2\vareps_2-\vareps_3-2\vareps_4\\
\diag\mu=>\times\circ< & & \diag\nu=\times<\circ> & &\diag\lambda=\circ<\times>.
\end{array}$$
If $\mu\in\cB$ the resulting diagram coincides with the
diagram constructed in~\ref{wtdiag}.

The symbols $>,<$ are called {\em core symbols}. By above, $\lambda,\mu$
have the same central character if
and only if all core symbols in their diagrams occupy the same positions
(for instance, in the above example $\nu$ and $\lambda$ have the same central character).

In this paper we define the atypicality of the weight to be the number of $\times$ in the diagram. By contrast, in~\cite{GqDS} the atypicality of the weight is defined as  the number of $\times$ in the diagram if the diagram does not have $>$ 
at the zero position and is equal to the number of $\times$ plus $\frac{1}{2}$ if
 the diagram has  $>$ 
at the zero position. In~\cite{GqDS} the symbol $>$ at the zero position is not considered as a core symbol; note that for this definition it is still true that $\lambda,\mu$
have the same central character if
and only if all core symbols in their diagrams occupy the same positions.

Let $\eta\in P^+(\fq_M)$ be a weight of atypicality $n>0$.
We denote by $\ol{\eta}$ the weight in $P^+(\fq_{2n+\ell})$ with the weight diagram which is obtained from $\diag\eta$ by erasing all core symbols at the non-zero positions. For  the above example we have
$$\begin{array}{lclclc}
\diag\ol{\mu}=>\times & & \diag\ol{\nu}=\times  & &\diag\ol{\lambda}=\circ\times\\
\ol{\mu}=\vareps_1-\vareps_3&  & \ol{\nu}=0 & &
\ol{\lambda}=\vareps_1-\vareps_2.
\end{array}$$
Note that $\ol{\eta}\in \cB_0$ if $\eta$ is integral and $\ol{\eta}\in\cB_{1/2}$
if $\eta$ is half-integral (for example, the weight $\eta=\frac{3}{2}\vareps_1-
\frac{1}{2}\vareps_2-\frac{3}{2}\vareps_3$ has the diagram $>\times$, so
$\diag\ol{\eta}=\times$ and
$\ol{\eta}=\frac{1}{2}\vareps_1-
\frac{1}{2}\vareps_2$).

\subsubsection{}
\begin{prop}{propqm}
For any $\eta,\zeta\in P^+(\fq_m)$ one has $\ext(\eta;\zeta)=\ext(\ol{\eta},\ol{\zeta})$
if $\eta,\zeta$ have the same central character.
\end{prop}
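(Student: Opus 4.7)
The plan is to identify a Levi subalgebra $\fl\subset\fq_m$ isomorphic to $\fq_{2n+\ell}\times\fh''$ that isolates the atypicality-free ``core'' of $\eta$ (equivalently of $\zeta$). Let $S\subset\{1,\ldots,m\}$ index those coordinates $a_i$ of $\eta$ that remain unpaired in the construction of $\core(\eta)$; these correspond exactly to the core symbols of $\diag\eta$ at non-zero positions, and by hypothesis the same set $S$ and the same core element $c\in(\ft^\perp)^*$ serve for $\zeta$. Choose $z\in\ft$ with $z_i=a_i$ for $i\in S$ (pairwise distinct and nonzero) and $z_i=0$ for $i\notin S$; then $\fg^z=\fq_{2n+\ell}\times\prod_{i\in S}\fq_1$, the ``toral'' factor $\fh'':=\prod_{i\in S}\fq_1$ satisfies $[\fh''_{\bo},\fh'']=0$, and $\eta=\ol{\eta}\oplus c$, $\zeta=\ol{\zeta}\oplus c$.

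For the upper bound I would invoke~(\ref{eqnaoborot}) to assume $\zeta<\eta$; every positive root contributing to $\eta-\zeta$ has the form $\vareps_i-\vareps_j$ with $i,j\notin S$, so it lies in $\Delta^-(\fl)$. Therefore~\Cor{corExtchain}, applied to a chain whose appropriate link is $\fl$, yields $\ext_{\fq_m}(\eta;\zeta)\leq\ext_{\fl}(\eta;\zeta)$. Since each $c_i\neq 0$ forces $C_{c_i}$ to be the unique $\Pi$-invariant Clifford $\fq_1$-module, the parity hypothesis of~\Cor{corbox} is met (with both $C_\eta$ and $C_\zeta$ in the same $\Pi$-invariance class, as guaranteed by~\ref{remKM} whenever the ext does not vanish), and~\Cor{corbox} gives $\ext_{\fl}(\eta;\zeta)=\ext_{\fq_{2n+\ell}}(\ol{\eta};\ol{\zeta})$.

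For the reverse inequality I would lift a non-split extension along the Levi: given an indecomposable $N_0\in\cN(\ol{\eta};\ol{\zeta};m)$ for $\fq_{2n+\ell}$, form $N_0\boxtimes C_c\in\cN(\eta;\zeta;m)$ for $\fl$ via~\Cor{corbox}, view it as a $\fp:=\fl+\fb$-module with trivial nilradical action, and apply $\Gamma_0^{\fq_m,\fp}$. The resulting $\fq_m$-module has cosocle $L_{\fq_m}(\eta)$, and the long exact sequence argument at the core of~\Lem{lem332} shows that its radical retains the summand $L_{\fq_m}(\zeta)^{\oplus m}$ provided (i) the relevant $K^1$-type multiplicity vanishes and (ii) other simple subquotients of $\Rad\Gamma_0^{\fq_m,\fp}(L_{\fp}(\eta))$ do not interfere via $\ext$. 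Both inputs reduce to the polynomial identity $K^{\lambda,\nu}_{\fq_m}(z)=K^{\ol{\lambda},\ol{\nu}}_{\fq_{2n+\ell}}(z)$ for $\lambda,\nu$ ranging over the block of $\eta$.

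The main obstacle is exactly this extension of~\Prop{propKa} beyond the principal / half-integral blocks $\cB$. The expected mechanism is a direct induction through the Penkov--Serganova recursions~(\ref{eqtheta})--(\ref{recursive}): the recursion moves only the rightmost $\times$ in $\diag\lambda$ and never touches a core symbol, so stripping the core commutes with the recursion step and preserves $K^{\lambda,\nu}(z)$. Once this identity is in hand,~\Cor{corgraphs}---or a transparent analogue of it in the present setting---completes the argument by delivering $\ext_{\fq_m}(\eta;\zeta)=k_0(\eta;\zeta)=k_0(\ol{\eta};\ol{\zeta})=\ext_{\fq_{2n+\ell}}(\ol{\eta};\ol{\zeta})$, subject to the exceptional cases already addressed in~\Cor{corExtmain}(iii).
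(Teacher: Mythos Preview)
Your argument has a structural gap at the very first step. You assert that the index set $S$ of unpaired (core) coordinates is the same for $\eta$ and for $\zeta$, but sharing a central character only forces the core \emph{values} to agree, not the indices at which they occur. For instance, $\eta=(3,2,-1,-2)$ and $\zeta=(4,3,-1,-4)$ in $P^+(\fq_4)$ both have core $\{3,-1\}$, yet the core value $3$ sits at index $1$ for $\eta$ and at index $2$ for $\zeta$; there is no single Levi $\fl\cong\fq_{2n+\ell}\times\fh''$ realizing both weights as $\ol{\eta}\oplus c$ and $\ol{\zeta}\oplus c$. Relatedly, your $z$ need not satisfy~(\ref{condz}): in this example $z=(3,0,-1,0)$ is not weakly decreasing, and no $z$ with level sets $\{1\},\{3\},\{2,4\}$ can be, since $\{2,4\}$ is non-contiguous. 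Hence $\fg^z+\fb$ is not a parabolic subalgebra, and neither~\Cor{corExtchain} (for your upper bound) nor~\Lem{lem332} (for your lower bound) is available.

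The paper repairs this by first reducing to \emph{stable} weights --- those whose $\times$ symbols all lie to the left of the non-zero core symbols --- using Brundan's translation functors, which swap a core symbol with an adjacent $\times$ while fixing $\ol{\eta}$. For two stable weights with the same central character, the core indices automatically coincide and form the extremal block $\{1,\ldots,r_+\}\cup\{m-r_-+1,\ldots,m\}$, so the Levi exists and the argument you sketched becomes legitimate. Moreover, for stable $\eta$ the paper invokes~\cite{PS2}, Corollary~1, to obtain $\Gamma^{\tilde{\fg},\fp}_i L_{\fp}(\eta)=\delta_{i0}L_{\tilde{\fg}}(\eta)$ outright; this makes all the relevant $K^i$ vanish and renders the hypotheses of~\Lem{lem332} trivially satisfied, so there is no need to extend~\Prop{propKa} to non-principal blocks or to push the recursions~(\ref{recursive}) (stated only for $\cB$) through core symbols.
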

\subsubsection{Outline fo the proof}\label{pfpropqm}
Take $\tilde{\fg}=\fq_m$ with a triangular decomposition $\tilde{\fg}=\tilde{\fn}_+\oplus\tilde{\fh}\oplus \tilde{\fn}_-$.
A weight $\mu\in P^+(\tilde{\fg})$ 
is called {\em stable} if all  symbols $\times$ preceed all core symbols  with non-zero coordinates (in the above example $\mu,\nu$ are stable weights and $\lambda$ is not stable). 

Let  $\eta$ be a stable weight of atypicality $n$. Then
$\tilde{\fg}$ contains a subalgebra $\fg\cong\fq_{2n+\ell(\eta)}$
with a compatible triangular decomposition such that 
the restriction  of $\eta$ to the Cartan subalgebra of $\fg_0$ equal to
$\ol{\eta}$ (in the above example, for $\mu$ one has
$\fg\cong\fq_3$ corresponding to $\vareps_1,\vareps_2,\varesp_3$
and for $\nu$ one has
$\fg\cong\fq_2$ corresponding to $\vareps_2,\varesp_3$).
By~\cite{PS2}, Corollary 1 for $\fp:=\fg+\tilde{\fb}$ 
one has $\Gamma^{\tilde{\fg},\fp}_0 L_{\fp}(\eta)=L_{\tilde{\fg}}(\eta)$
and  $\Gamma^{\tilde{\fg},\fp}_i L_{\fp}(\eta)=0$ for $i>0$. Combining~\ref{lemExtVerma}, \ref{lem332} and~\ref{corbox}  we obtain 
 $\ext(\eta;\zeta)=\ext(\ol{\eta};\ol{\zeta})$ if $\eta,\zeta$
 are stable weights with the same central character.

The general case can be reduced to
 the stable case with the help of translation functors 
described in~\cite{Br}. A translation 
functor which preserves
the degree of atypicality and the value of $\ell(\eta)$   transforms $L(\eta)$ 
to $L(\eta')\oplus \Pi L(\eta')$, where  $\diag\eta'$ is obtained from 
$\diag\eta$ by permuting two neighboring symbols at non-zero positions
if exactly one of these symbols is a core symbol:
for instance, for $\lambda$ as above we can obtain $\lambda'$s with the diagrams
$\circ<>\times$ and $\circ\times<>$ (the last diagram is stable).
Note that $\ol{\eta}=\ol{\eta}'$.
Using these fucntors we can transform any two simple modules
$L(\eta)$, $L(\zeta)$ with the same central character 
to the modules $L(\eta')^{\oplus r}\oplus \Pi L(\eta')^{\oplus r}$, $L(\zeta')^{\oplus r}\oplus \Pi L(\zeta')^{\oplus r}$,
where $\eta',\zeta'$ are stable weights with the same central character
and $\ol{\eta}=\ol{\eta}'$, $\ol{\zeta}=\ol{\zeta}'$
(the diagrams of $\eta'$ and $\zeta'$
 are stable diagrams obtained from the diagrams of $\eta$ and $\zeta$ by 
moving all core symbols from the  non-zero position ``far enough'' to the right).
It is not hard to show  that these functors map a module from
$\cN(\eta;\zeta;m)$ to a module of the form $M\oplus \Pi M$, where
$M$ is a direct sum of  $r$ modules
from $\cN(\eta',\zeta';m)$. This gives $\ext(\eta;\zeta)\leq \ext(\eta';\zeta')$.
Using the same set of functors we can transform $L(\eta'),L(\zeta')$ 
 to the modules $L(\eta)^{\oplus r}\oplus \Pi L(\eta)^{\oplus r}$, $L(\zeta)^{\oplus r}\oplus \Pi L(\zeta)^{\oplus r}$; this implies
$\ext(\eta';\zeta')\leq \ext(\eta;\zeta)$.
Since $\eta',\zeta'$ are stable we obtain 
$\ext(\eta;\zeta)=\ext(\eta';\zeta')=
\ext(\ol{\eta};\ol{\zeta})$ as required. \qed

\subsubsection{}
The arch diagrams for an arbitrary $\lambda\in P^+(\fq_M)$ are constructed 
in the same way as the arch diagrams for $\lambda\in\cB$: starting from the rightmost
symbol $\times$  in the weight diagram $\diag(\lambda)$ 
we connect each symbol $\times$ at the non-zero position with the next free symbol $\circ$, then 
each symbol $\times$ at the zero position  with the next two free symbols $\circ$ and then add wobbly arch if there is $>$ at the zero position. There is a natural bijection
between the arches in
$\Arc(\lambda)$ and $\Arc(\ol{\lambda})$. 
\subsubsection{}
\begin{cor}{corThm1}
For $\lambda>\nu\in P^+(\fq_m)$ one has
\begin{enumerate}
\item $\ext(\lambda;\nu)\leq 2$;
\item if $\ext(\lambda;\nu)\not=0$, then 
 $\diag{\lambda}$  can be obtained from 
 $\diag\nu$  by moving one  symbol $\times$ along the arch in $\Arc(\nu)$;
 \item if  $\diag\ol{\lambda}$ does not have $\times$ at the position  
 $0, 1, 1+\ell(\lambda)$,
 then $\ext(\lambda;\nu)=1$ if 
  $\diag{\lambda}$  can be obtained from 
 $\diag\nu$  by moving one  symbol $\times$ along the arch
 and $\ext(\lambda;\nu)=0$ otherwise.
\end{enumerate}
\end{cor}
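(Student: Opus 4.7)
The plan is to reduce everything to the setting of the principal or half-integral block of maximal atypicality, where Corollary~\ref{corExtmain} and the combinatorial description of $G(\cB;K^0)$ in~\ref{Kqq} already do the work. First I would observe that two simple modules with different central characters have vanishing $\Ext^1$, so I may assume $\lambda$ and $\nu$ have the same central character. By the description of the centre of $\cU(\fq_m)$ recalled in~\ref{wtdiagqm}, this forces the core symbols at the non-zero positions of $\diag\lambda$ and $\diag\nu$ to coincide, so $\ell(\lambda)=\ell(\nu)$ and $\lambda,\nu$ share the same atypicality $n$. Proposition~\ref{propqm} then gives $\ext(\lambda;\nu)=\ext(\ol\lambda;\ol\nu)$, where $\ol\lambda,\ol\nu$ lie in $\cB_0$ (integral case) or in $\cB_{1/2}$ (half-integral case) for $\fq_{2n+\ell(\lambda)}$, and erasing the non-zero core symbols induces an order-preserving bijection $\Arc(\mu)\leftrightarrow\Arc(\ol\mu)$ that interchanges arch-move operations on diagrams.

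For (i) and (ii) I would combine Corollary~\ref{corExtmain}(iii), which yields $\ext(\ol\lambda;\ol\nu)\leq k_0(\ol\lambda;\ol\nu)$, with the description of $G(\cB;K^0)$ for $\fq_{2n+\ell}$ in~\ref{Kqq}: edges are precisely single arch moves $\diag\ol\lambda=(\diag\ol\nu)_a^{a'}$, and the edge multiplicity is at most $2$ (the double case arises only when $a'$ is the second leg of the maximal three-legged arch of $\ol\nu$). This bounds $\ext(\lambda;\nu)\leq 2$, establishing (i), and $\ext(\lambda;\nu)\neq 0$ transfers back through the arch bijection to the statement that $\diag\lambda$ is obtained from $\diag\nu$ by moving one symbol $\times$ along one of the arches of $\Arc(\nu)$, establishing (ii).

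For (iii) I would check that the hypothesis on $\diag\ol\lambda$ implies $\ol\lambda_n>1+\ell(\lambda)$ when $\ol\lambda\in\cB_0$, and is vacuously satisfied when $\ol\lambda\in\cB_{1/2}$ (since the forbidden positions $0,1,1+\ell$ are integral). In both regimes Corollary~\ref{corExtmain} gives the sharp equality $\ext(\ol\lambda;\ol\nu)=k_0(\ol\lambda;\ol\nu)\leq 1$, with $k_0=1$ exactly when there is an edge $\ol\nu\to\ol\lambda$ in $G(\cB;K^0)$; combined with~\ref{Kqq} and the arch bijection this yields (iii). The main subtlety in executing this plan is the second step: one must verify that the bijection between $\Arc(\mu)$ and $\Arc(\ol\mu)$ really does send the operation $(\diag\nu)_a^{a'}$ to the corresponding operation $(\diag\ol\nu)_{\ol{a}}^{\ol{a}'}$, so that the notion of arch move on the full diagrams is identified with the one used in~\ref{Kqq} and Proposition~\ref{propKa}. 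This reduces to a direct inspection of how arches are reconstructed after the non-zero core symbols are erased, and is implicit in the outline of~\ref{pfpropqm}; once it is in hand the three assertions follow from the already established machinery.
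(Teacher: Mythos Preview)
Your approach is exactly what the paper intends: the corollary is stated without its own proof and is meant to follow from Proposition~\ref{propqm} together with Corollary~\ref{corExtmain} and the natural bijection $\Arc(\mu)\leftrightarrow\Arc(\ol\mu)$ recorded just before the statement. Parts (i) and (ii) go through as you outline, since $\ext\le k_0$ and the description of $G(\cB;K^0)$ in~\ref{Kqq} bounds $k_0\le 2$ and forces every edge to be an arch move.

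There is, however, a genuine gap in your treatment of the converse direction in (iii). Under the hypothesis you correctly get $\ext(\ol\lambda;\ol\nu)=k_0(\ol\lambda;\ol\nu)\le 1$, with $k_0=1$ exactly when $\ol\nu\to\ol\lambda$ is an edge. But~\ref{Kqq} says that an edge is an arch move \emph{subject to the restriction} $a'\ne b$, where $b$ is the far leg of the maximal three-legged arch of $\ol\nu$; it is not an arbitrary arch move. The hypothesis on $\ol\lambda$ does not make this restriction automatic. For $\fq_4$ take $\ol\nu=(3,0)$ and $\ol\lambda=(3,2)$: then $\diag\ol\lambda=\circ\circ\times\times$ has no $\times$ at positions $0,1$, so the hypothesis of (iii) holds; moving the $\times$ from $0$ to $2$ along $\arc(0;1,2)\in\Arc(\ol\nu)$ is an arch move, yet here $a'=b=2$ so this is \emph{not} an edge, and indeed $k_0=\ext=0$ (one checks $K^0_{(1)}$ via Proposition~\ref{propKa}(iii) with $A_{f;2}=\emptyset$). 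Thus the literal statement of (iii) is slightly stronger than what the machinery delivers: your argument proves $\ext=1$ exactly when the \emph{restricted} arch move of~\ref{Kqq} is available. Either the phrase ``moving along the arch'' in (iii) should be read with this restriction built in, or the claim requires the further observation that under the hypothesis every admissible $\ol\nu$ in $B(\ol\lambda)$ already lies in $\cB_{>0}$ (which is true), so that three-legged arches of $\ol\nu$ never enter; but the bare arch-move condition on an arbitrary $\ol\nu$ is not equivalent to $k_0=1$.
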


\subsubsection{}
The above Corollary implies
Theorem A for $\fq_m$ and gives a description of the graph 
$(\Fin(\fq_m)_{1/2},\exp)$, where
$\Fin(\fq_m)_{1/2}$ is the full subcategory consisting of the modules
with half-integral weights. Note that $\pari(\ol{\lambda})$ is a biparition
of this graph.

 By above,
 $\ext_{\fq_3}(2\theta,\theta)=0$, so
 the converse of (ii) does not hold (in this case $\diag 2\theta=>\times$, so $\times$ occurs at the position $1+\ell(2\theta)=2$).

\subsection{Properties (Dex1), (Dex2)}\label{Dex12}
Consider the case $\fg=\fgl(m|n),\osp(M|2n)$.
The map $\pari(\ol{\lambda})$ is a bipartition of
the graph $(\Fin(\fg);\exp)$. The map $\Irr(\Fin(\fg))\to\mathbb{Z}_2$
given by  $L(\lambda),\Pi L(\lambda)\mapsto \pari(\ol{\lambda})$ 
satisfies (Dex1), but
does not satisfy (Dex2). A map satisfying (Dex1) and (Dex2) can be 
constructed using a certain
decompostion $\Fin(\fg)=\cF\oplus \Pi \cF$ (for atypical modules $N$
we take $N\in\cF$ if
$N_{\ol{i}}=\sum_{\mu: p(\mu)=\ol{i}} N_{\mu}$, where
$p(\mu)$ is given by $p(\vareps_i)=\ol{0}$, $p(\delta_j)=\ol{1}$).
Taking $\pari(L(\lambda)):=\pari(\ol{\lambda})$ for $L(\lambda)\in\cF$
and $\pari(L(\lambda)):=\pari(\ol{\lambda})+\ol{1}$ for $L(\lambda)\in \Pi\cF$
we obtain a map satisfying (Dex1) and (Dex2), see~\cite{HW},\cite{GH} for details.

For the case $(\fq;\frac{1}{2})$ the map $\pari(\ol{\lambda})$ is a bipartition of the graph $(\Fin(\fq_m)_{1/2};\exp)$ (where $\Fin(\fg)_{1/2}$ is
the full subcategory of $\Fin(\fq_m)$ consisting of the modules
with half-integral weights); the map
$\Irr(\Fin(\fq_m)_{1/2})\to\mathbb{Z}_2$
given by  $L(\lambda),\Pi L(\lambda)\mapsto \pari(\ol{\lambda})$ 
satisfies (Dex1). 

For the remaining case $(\fq;\CC)$ one has $\fg:=\fq_m$; we denote by $\CC$ the Serre subcategory generated
by $L(\lambda),\Pi L(\lambda)$  with $\ol{\lambda}$ satisfying the assumption
of~\Cor{corThm1} (iii).
By above, $\pari(\ol{\lambda})$ is a bipartition of the graph $(\CC;\exp)$ and the map
$\Irr(\CC)\to\mathbb{Z}_2$
given by  $L(\lambda),\Pi L(\lambda)\mapsto \pari(\ol{\lambda})$ 
satisfies (Dex1).

\subsection{Remark}\label{Extext}
If $\CC$ is a full $\Pi$-invariant subcategory of $\CO(\fg)$ and
$\fg$ is a Kac-Moody superalgebra, then
the $\Ext^1$-graph of $\CC$ is a disjoint union of two copies
of the graph $(\CC;\ext)$. In particular, if $\ext$-graphs of $\CC\subset \CO(\fg)$
and $\CC'\subset \CO(\fg')$ are isomorphic, then
$\Ext^1$-graphs of $\CC$ and $\CC'$ are isomorphic, see examples in~\ref{Alex}.
This does not hold for $\fq_m$: for instance, the half-integral principal
block in $\Fin(\fq_2)$ and the intergal principal blocks in $\Fin(\fq_2),\Fin(\fq_3)$
have isomorphic $\ext$-graphs and different $\Ext^1$-graphs (see~\cite{MM},\cite{GNS}).

\subsubsection{}
Take $\fg=\fq_m$.   Fix a central character $\chi$
and let $\CC_{\chi}$ be the corresponding Serre subcategory of $\Fin(\fq_m)$.
We assume that $\CC_{\chi}\not=0$ and denote by $(CC_{\chi},\Ext^1)$ the $\Ext^1$-graph 
of $\CC_{\chi}$.

By above, the set $\core(\lambda)$ is the same for all $L(\lambda)\in\CC_{\chi}$.
We denote this set by $\core(\chi)$.
We say that $\chi$ is $\Pi$-invariant if $\core(\chi)\setminus\{0\}$
contains an odd number of elements; in this case 
each $L(\lambda)\in \CC_{\chi}$ 
 is $\Pi$ -invariant. If $\chi$ is not $\Pi$-invariant, then 
each $L(\lambda)\in \CC_{\chi}$ 
 is not $\Pi$ -invariant.

 We say that $\chi$ is intergal (resp., half-integral) if $\core(\chi)$ contains
 an intergal (half-integral) number. 
 If $\chi\not=\chi_0$ is atypical, then $\chi$ is either integral or half-integral and
the graph $(\CC_{\chi};\ext)$ is connected. If
$\chi=\chi_0$ and $m$ is even, the graph $(\CC_{\chi};\ext)$ 
has two connected components $(\cB_0;\ext)$ and $(\cB_{1/2};\ext)$.

If  $\chi$ is $\Pi$-invariant, the graph $(\CC_{\chi},\Ext^1)$ can be 
obtained from $(\ext;\CC_{\chi})$ by adding the loops around each vertex 
$\lambda$ with $0\in\{\lambda_i\}_{i=1}^m$, see~\cite{GNS}, Theorem 3.1.
In particular, $(\CC_{\chi},\Ext^1)=(\ext;\CC_{\chi})$
if $\chi$ is $\Pi$-invariant and half-integral.

Consider the case when  $\chi$ is not $\Pi$-invariant.
The vertices of $(\CC_{\chi};\Ext^1)$ are of the forms
$(\nu;i)$, where $\nu\in\CC_{\chi}, i\in\mathbb{Z}_2$.
By Theorem 3.1 in~\cite{GNS} the graph $(\CC_{\chi};\Ext^1)$  does not have loops and
the vertices $(\nu,i),(\nu,i+1)$ are connected by a unique edge
$\longleftrightarrow$ if $0\in\{\lambda_i\}_{i=1}^m$; otherwise
these vertices are not connected. Consider the edges of the form
$(\nu,i)\leftrightarrow (\lambda,j)$. 
Each edge $\nu\leftrightarrow \lambda$ corresponds to fours edges
 $(\nu,0)\leftrightarrow (\lambda,j)$, $(\lambda,j)\leftrightarrow (\nu,i)$
and  $(\nu,1)\leftrightarrow (\lambda,j+1)$, 
$(\lambda,j+1)\leftrightarrow (\nu,i+1)$ for some $i,j$.
By~\cite{Frisk}, 
$$\dim\Ext^1(L(\lambda),L(\nu))=\dim \Ext^1(L(\nu),\Pi^{\tail(\ol{\lambda};\ol{\nu})} L(\lambda))$$ which implies $i=0$ if $\tail(\ol{\lambda};\ol{\nu})$ is even
and $i=1$ if $\tail(\ol{\lambda};\ol{\nu})$ is odd. 
In particular, if $\chi$ is not $\Pi$-invariant and half-integral, then
in the graph $(\CC_{\chi};\Ext^1)$ the vertices 
$(\nu,i),(\nu,i+1)$ are not connected and all edges are of the form $\longleftrightarrow$.

Unfortunately, the above information is not sufficient for a description
of $(\CC_{\chi};\Ext^1)$ for atypicality greater than one (the graphs for atypicality
one  were described in~\cite{MM},\cite{GNS}).



\begin{thebibliography}{MMM}
%
%


\bibitem{BD} J.~Brundan, N.~Davidson, 
{\em Type $C$ blocks of super category $\CO$},  Math. Z. {\bf 293} (2019), no. 3-4, 867–-901.

\bibitem{Br} J.~Brundan, {\em Kazhdan-Lusztig polynomails and character formulae
for Lie superalgebra $\fq(n)$}, Adv. Math. {\bf 182} (2004), 28--77.

\bibitem{BS} J.~Brundan, C. Stroppel,
{\em Highest weight categories arising from Khovanov's diagram algebra. IV:
    the general linear supergroup.},
J. Eur. Math. Soc. (JEMS), {\bf 14},
{2},
(2012).




%
%


\bibitem{ChK} J.-S.~Cheng, J.-H.~Kwon, {\em Finite-dimensional half-integer weight modules over queer Lie superalgebras} Comm. Math. Phys., {\bf 346} (2016), 945--965.

%




\bibitem{DGK} V.~Deodhar, O.~Gabber, V.~Kac, {\em Structure of some categories of
representations of infinite-dimensional Lie algebras}, Adv. in Math. {\bf 45} (1982), 92--116.




\bibitem{Dix} J.~Dixmier, {\em  Repr\'{e}sentations irr\'{e}ductibles des 
alg\`{e}bres de Lie nilpotentes},  An. Acad. Brasil. Ci. {\bf 35}, 4(1963) 91-–519.

\bibitem{DS} M.~Duflo, V.~Serganova, {\em On associated variety for
  Lie superalgebras}, arXiv:0507198.


\bibitem{ES1} M.~Ehrig, C.~Stroppel,
  {\em  On the category of finite-dimensional representations of  $OSP(r|2n)$,  Part I}, Representation theory -— current trends and perspectives, 109–170, EMS Ser. Congr. Rep., Eur. Math. Soc., Z\"urich, 2017. 
%

    
    
\bibitem{ES2} M. Ehrig, C. Stroppel,    {\em Deligne categories and representations of $OSp(r|2n)$}, preprint.


\bibitem{EAS} I. Entova-Aizenbud, V. Serganova,
{\em Duflo-Serganova functor and superdimension formula for the periplectic Lie superalgebra},  
arXiv: {1910.02294},
(2019)

\bibitem{Frisk} A.~Frisk, {\em Typical blocks of the category $\CO$ 
for the queer Lie superalgebra}, J. Algebra Appl. 6 (2007), no. 5, 731–-778.
    
\bibitem{Germoni1} J.~Germoni, {\em Indecomposable representations of special linear
    Lie superalgebras}, J. of Algebra, {\bf 209}, (1998), 367--401.
    
  \bibitem{Germoni2} J.~Germoni, {\em Indecomposable representations of   $\osp(3,2)$
  $D(2,1;\alpha)$ and $G(3)$} in Colloquium on Homology and Representation Theory,
  Vaqueras, 1998, Bol. Acad. Nac. Cienc. (C\'ordoba) {\bf 65} (2000), 147-–163.
%

\bibitem{Gdex} M.~Gorelik, {\em Bipartite extension graphs and the Duflo--Serganova functor}, arXiv:2010.12817.


\bibitem{GH} M.~Gorelik, T.~Heidersdorf, {\em Semisimplicity of the $DS$ functor for the orthosymplectic Lie superalgebra}, to appear in Adv. in Math.

\bibitem{GH2} M.~Gorelik, T.~Heidersdorf, {\em Gruson-Serganova character formulas and the Duflo-Serganova cohomology functor}, arXiv:2104.12634.


\bibitem{DSnew} M.~Gorelik, C.~Hoyt, V.~Serganova, A.~Sherman {\em The Duflo-Serganova functor, vingt ans apr\`es}, arXiv:2203.00529.



\bibitem{GSaff} M.~Gorelik, V.~Serganova, {\em Integrable modules over affine Lie superalgebras
$\mathfrak{sl}(1|n)^{(1)}$ },
Comm. Math. Phys. {\bf 364} (2018), no. 2, 635–-654.


\bibitem{GSherman} M.~Gorelik, V.~Serganova, A.~Sherman, {\em On the Grothendieck ring
of quasireductive Lie superalgebras}, preprint.



\bibitem{GqDS} M.~Gorelik, A.~Sherman, {\em On Duflo-Serganova functor for the queer Lie superalgebra}, preprint.

\bibitem{GNS} N.~Grantcharov, V.~Serganova, {\em Extension quiver for Lie superalgebra $\fq_3$}, SIGMA Symmetry Integrability Geom. Methods Appl. 16 (2020), Paper No. 141.

\bibitem{GS} C.~Gruson, V.~Serganova, {\em Cohomology
  of generalized supergrassmanians and character formulae for basic classical Lie superalgebras},
Proc. London Math. Soc., (3), {\bf 101} (2010), 852--892.



%
%
%


\bibitem{HW} T.~Heidersdorf, R.~Weissauer, {\em Cohomological tensor functors on representations of the
    general linear supergroup}, arXiv:1406.0321, to appear in Mem. Am. Math. Soc.
%
%
%

%
%
%

%



\bibitem{Lilit} L.~Martirosyan, {\em The representation theory of the exceptional
Lie superalgebras $F(4)$ and $G(3)$}, J. of Algebra, {\bf 419} (2014), 167--222.

\bibitem{MM} V. Mazorchuk, V. Miemietz, {\em Serre functors for Lie algebras and superalgebras}, Ann. de l’Institut Fourier
{\bf 62} (2012), no.1, 47–-75.


\bibitem{MS} I.~Musson, V.~Serganova, {\em Combinatorics of
  character formulas for the
  Lie superalgebra $\fgl(m|n)$},
Transform. Groups {\bf 16} (2011), no. 2, 555–-578.



\bibitem{Pq} I.~Penkov, {\em Characters of typical irreducible finite-dimensional
$\fq(n)$-modules}, Funct. Anal. Appl. {\bf 20} (1986), 30--37.

\bibitem{Penkov} I.~Penkov, {\em Borel-Weil-Bott theory for classical Lie supergroups}, J. Soviet Math. {\bf 51} (1990), 2108-2140 (Russian).

\bibitem{PS1} I.~Penkov, V.~Serganova, {\em Characters of finite-dimensional irreducible $\fq(n)$-modules}, Lett.
Math. Phys. {\bf 40 }(1997), 147–-158.

\bibitem{PS2} I. Penkov, V. Serganova, {\em Characters of irreducible $G$-modules and cohomology of $G/P$ for
the supergroup $G = Q(N)$}, J. Math. Sci., {\bf 84}, (1997), 1382–-1412.
%
%



\bibitem{Skw} V.~Serganova, {\em On a superdimension of an
  irreducible representation of a
  basic classical Lie superalgebras},
  in Supersymmetry in mathematics and physics, 253-–273, Lecture Notes
  in Math.,
  2027, Springer, Heidelberg, 2011.





\bibitem{Skw} V.~Serganova,
{\em Kac-Moody superalgebras and integrability}, in
Developments and trends in infinite-dimensional Lie theory, 169--218,
Progr. Math., 288, Birkh\"auser Boston, Inc., Boston, MA, 2011.


\bibitem{SICM} V.~Serganova {\em Finite-dimensional representation of algebraic supergroups}, Proceedings of the International Congress of Mathematicians—Seoul 2014. Vol. 1, 603–-632, Kyung Moon Sa, Seoul, 2014.



\bibitem{Sq} A.~Sergeev,  {\em The centre of the enveloping algebra
for  Lie superalgebra $Q(n,\mathbb{C})$}, Lett. Math. Phys. {\bf 7} (1983), no. 3, 177--179.


\bibitem{SZq} Yucai~Su, R.~B.~Zhang, {\em Character and dimension
formulae for queer Lie superalgebra}, Commun. Math. Phys. 333, No. 3, 1465--1481 (2015).
%
%

%
%
%
\end{thebibliography}
\end{document}